\crefname{equation}{}{} 
\setlist[enumerate]{label*=\alph*),ref=\alph*)}
\newcommand{\Asup}{\mathcal{A}_{\sup}}
\newcommand{\Anosup}{\mathcal{A}_{no\sup}}
\newcommand{\B}{\mathcal{B}}
\newcommand{\E}{\mathbb{E}}
\newcommand{\e}{\mathrm{e}}
\newcommand{\F}{\mathcal{F}}
\newcommand{\G}{\mathcal{G}}
\newcommand{\PP}{\mathbb{P}}
\newcommand{\R}{\mathbb{R}}
\newcommand{\N}{\mathbb{N}}
\newcommand{\one}{\mathbbm{1}}
\newcommand{\dd}{\mathrm{d}}
\newcommand{\define}{\vcentcolon =}
\newcommand{\EO}{\mathbb{E}_{\F_0}}
\newcommand{\PO}{\mathbb{P}_{\F_0}}
\newcommand{\pF}{_{p,\F_0}}
\newcommand{\qF}{_{q,\F_0}}
\newcommand{\nn}{^{(n)}}
\newcommand{\cadlag}{\text{C}([-r, \infty);\R^d)}
\newcommand{\cadlagg}{c\`adl\`ag }
\newcommand{\pred}{\mathcal{P}}
\newcommand{\Xt}{X_t}
\newcommand{\Xsm}{X_{s^-}}
\newcommand{\s}{_s}
\newcommand{\uu}{_u}
\newcommand{\sm}{_{s^-}}
\newcommand{\ti}{_t}
\newcommand{\Xvdkln}{(X^*_{\tau_{k-1}})^{p}}
\newcommand{\Xvdkn}{(X^*_{\tau_{k}})^{p}}
\newcommand{\range}{\rm{range}}
\newcommand{\id}{\mathrm{id}}
\DeclareMathOperator*{\esssup}{ess\,sup}
\newtheorem{theorem}{Theorem}[section]
\newtheorem{corollary}[theorem]{Corollary}
\newtheorem{lemma}[theorem]{Lemma}
\newtheorem{defi}[theorem]{Definition}
\theoremstyle{definition}
\newtheorem{example}[theorem]{Example}
\newtheorem{remark}[theorem]{Remark}
\newtheorem{cntexample}[theorem]{Counterexample}
\title{Sharp convex generalizations of stochastic Gronwall inequalities}
\author{%
Sarah~Geiss\footnote{Technische Universit\"at Berlin, Germany. E-mail: \href{mailto:geiss@math.tu-berlin.de}{geiss@math.tu-berlin.de}}
\thanks{The author was supported by an 
Elsa-Neumann-Stipendium des Landes Berlin.} }
\date{April 20, 2023}   
\begin{document}

\maketitle

\begin{abstract} 
We provide generalizations of a class of stochastic Gronwall inequalities that has been studied by von Renesse and Scheutzow (2010), Scheutzow (2013), Xie and Zhang (2020) and Mehri and Scheutzow (2021). This class of stochastic Gronwall inequalities is a useful tool for SDEs.

Our focus are convex generalizations of the Bihari-LaSalle type. The constants we obtain are sharp. In particular, we provide new sharp constants for the stochastic Gronwall inequalities. The proofs are connected to a domination inequality by Lenglart (1977), an inequality by Pratelli (1976) and  a characterization of Lenglart's concept of domination via the Snell envelope.

The inequalities we study appear for example in connection with exponential moments of solutions to path-dependent SDEs: For non-path-dependent SDEs, criteria for the finiteness of exponential moments are known. To be able to extend these proofs to the path-dependent case, a convex generalization of a stochastic Gronwall inequality seems necessary. Using the results of this paper, we obtain a criterion for the finiteness of exponential moments which is similar to that known for non-path-dependent SDEs. 

Stochastic Gronwall inequalities can also be applied to study other types of SDEs than path-dependent SDEs: An estimate of this paper is applied by Agresti and Veraar (2023) to prove global well-posedness for reaction-diffusion systems with transport noise.
\end{abstract}

\noindent\textbf{Keywords:} stochastic Gronwall inequality,  stochastic Bihari-LaSalle inequality, Lenglart's domination inequality, Snell envelope, sharp constants, exponential moments of path-dependent SDEs \\ [0.25em]
\textbf{MSC2020 subject classifications:} 
34K50, 
60H10, 
60G44, 
60G51, 
60J65 

\bigskip

\tableofcontents

\section{Introduction}\label{sec:intro}

In this article we provide sharp generalizations of two types of stochastic Gronwall inequalities. In particular, we establish new sharp constants for the stochastic Gronwall inequalities we generalize. The inequalities we study appear for example in connection with path-dependent SDEs: For non-path-dependent SDEs, criteria for the finiteness of exponential moments are known. To be able to extend these results to the path-dependent case, a convex generalization of a stochastic Gronwall inequality seems necessary. Using the results of this paper, we obtain a criterion for the finiteness of exponential moments that complements the criterion known for non-path-dependent SDEs. Stochastic Gronwall inequalities can also be applied to study other types of SDEs than path-dependent SDEs: An estimate of this paper is applied by Agresti and Veraar \cite{AgrestiVeraar2} to prove global well-posedness for reaction-diffusion systems with transport noise.

\bigskip

\textbf{Results in the literature on stochastic Gronwall inequalities:} 
The following  \emph{stochastic Gronwall inequality with supremum} is due to von Renesse and Scheutzow \cite[Lemma 5.4]{RenesseScheutzow} and was generalized by Mehri and Scheutzow \cite[Theorem 2.1]{MehriScheutzow}: Let $(\Xt)_{t\geq 0}$ be a non-negative stochastic process that satisfies
\begin{equation}\label{eq:introsuplinear}
\Xt \leq  \int_{(0,t]} \Xsm^* \dd A\s + M\ti + H\ti \qquad \text{for all } t\geq 0,
\end{equation}
where $X^*\s= \sup_{u\in[0,s]}X\uu$  denotes the running supremum. Here, $M$ is a c\`adl\`ag local martingale that starts in $0$, and $H$ and $A$ are suitable non-decreasing stochastic processes. Then, for all $T>0$ and $p\in(0,1)$ there exists an explicit upper bound for $\E[\sup_{t\in[0,T]} X\ti^p]$ which does not depend on the local martingale $M$.

There is also a \emph{stochastic Gronwall inequality without supremum} which is closely connected to the previous inequality with supremum. This result is due to Scheutzow \cite[Theorem 4]{Scheutzow} and was generalized by Xie and Zhang \cite[Lemma 3.7]{XieZhang} from continuous local martingales to c\`adl\`ag local martingales: If we assume instead of \eqref{eq:introsuplinear} the slightly stronger assumption that 
\begin{equation}\label{eq:intronosuplinear}
X\ti \leq  \int_{(0,t]} X\sm \dd A\s + M\ti + H\ti \qquad \text{for all } t\geq 0,
\end{equation}
sharper bounds can be obtained.

Both previously mentioned inequalities are useful tools for SDEs. The stochastic Gronwall inequality with supremum 
is applied to study SDEs with memory, see for  example  \cite{AppSup1}, \cite{AppSup4},  \cite{AppSup3}, \cite{AppSup2},  \cite{HutzenthalerNguyen}, \cite{MehriScheutzow}, \cite{AppSup5} and \cite{RenesseScheutzow}. 
The stochastic Gronwall inequality without supremum is applied to study various SDEs without memory, see e.g.
 \cite{AppNoSup4}, \cite{AppNoSup3}, \cite{AppNoSup8}, \cite{AppNoSup10}, \cite{AppNoSup11},
 \cite{AppNoSup1}, \cite{AppNoSup7}, \cite{AppNoSup5}, \cite{AppNoSup2}, \cite{AppNoSup6} and
 \cite{AppNoSup9}.
 
Also other stochastic Gronwall inequalities have been studied, see e.g. Glatt-Holtz and Ziane \cite[Lemma 5.3]{GlattZiane} and Agresti and Veraar \cite[Lemma A.1]{AgrestiVeraar}.

\bigskip
\textbf{Results in the literature on generalizations of stochastic Gronwall inequalities:}
Also nonlinear extensions of the stochastic Gronwall inequalities have been studied. Makasu \cite[Theorem 2.2]{Makasu1} and Le and Ling \cite[Lemma 3.8]{LeLing} studied a generalization where in the assumption the term $\int_{(0,t]} X\sm \dd A\s$  is replaced by $\big(\int_{(0,t]} (X_s)^\theta \dd A\s\big)^{1/\theta}$ for $\theta >0$. Under similar assumptions as above, estimates for $\E[\sup_{t\in[0,T]}X^p_t]$  were obtained.
 
 A further extension of the stochastic Gronwall inequalities has been studied by Mekki, Nieto and Ouahab \cite[Theorem 2.4]{Nieto}: For continuous local martingales a so-called stochastic Henry Gronwall's inequality 
with upper bounds that do not depend on the local martingale $M$ can be proven.

In addition, also extensions of the stochastic Gronwall inequalities have been studied, where the upper bounds depend on the quadratic variation of the martingale $M$, see e.g. Makasu \cite{Makasu1}, Makasu \cite{Makasu2} and Mekki, Nieto and Ouahab \cite{Nieto}. In the present paper, we focus on bounds which do not depend on the local martingale $M$.  Furthermore,  Hudde, Hutzenthaler and Mazzonetto \cite{Hudde} have extended the stochastic Gronwall inequality without supremum to the setting of It\^o processes which satisfy a suitable one-sided affine-linear growth condition.

\bigskip \bigskip

In this paper, we study the following generalization of the above mentioned stochastic Gronwall inequalities: We replace the assumptions \eqref{eq:introsuplinear} and \eqref{eq:intronosuplinear} by
\begin{equation}\label{eq:introsup}
X\ti \leq  \int_{(0,t]} \eta(X^*\sm) d A\s + M\ti + H\ti \qquad \text{for all } t\geq 0,
\end{equation}
and
\begin{equation}\label{eq:intronosup}
X\ti \leq  \int_{(0,t]} \eta(X\sm) d A\s + M\ti + H\ti \qquad \text{for all } t\geq 0,
\end{equation}
respectively, where $\eta:[0,\infty) \to [0,\infty)$ is a \textit{convex} non-decreasing function. 

For continuous local martingales and $\eta$ that satisfy 
$\int_{1}^\infty \frac{\dd u}{\eta(u)} =+\infty$, \eqref{eq:introsup} is studied by von Renesse and Scheutzow \cite[Lemma 5.1]{RenesseScheutzow} in the context of global solutions of stochastic functional differential equations. For \cadlagg martingales and concave $\eta$, and $\eta$ that satisfy either $\int_{0+} \frac{\dd u}{\eta(u)} =+\infty$ or $\int_{1}^\infty \frac{\dd u}{\eta(u)} =+\infty$, these inequalities are studied in \cite{GeissConcave}.

The Bihari-LaSalle inequality provides an upper bound for $X\ti$ in the deterministic case (i.e. $M \equiv 0$), see \cite{Bihari}, \cite{LaSalle}. More general versions than \cref{lemma:DetBihari} are known in the literature, see also e.g. \cite[Lemma 1.1.]{GeissConcave} for a version which allows $x$ and $A$ to be \cadlagg.

\begin{lemma}[Deterministic Bihari-LaSalle inequality]\label{lemma:DetBihari}
Let $x,\varphi \colon [0,\infty)\to[0,\infty)$ be positive continuous functions and $T,H\geq 0$ constants. Further, let $\eta\colon[0,\infty)\to[0,\infty)$ be a non-decreasing continuous function and set $A(t) \define \int_0^t \varphi(u) \dd u$ for all $t\in[0,T]$. Then, the inequality
\begin{equation}\label{eq:detBihari}
x(t) \leq \int_{0}^t \eta(x(s)) \dd A(s) + H \qquad \forall t\in[0,T]
\end{equation}
implies the inequality 
\begin{equation*}
x(t) \leq G^{-1}(G(H) + A_t)) \qquad \forall t\in[0,T'].
\end{equation*}
where $G(v) \define \int_c^v \frac{\dd u}{\eta(u)}$ for some constant $c>0$. Here, $T'$ is chosen small enough such that $G(H) + A_t \in \textrm{domain}(G^{-1})$ is ensured for all $t\in[0,T']$.
\end{lemma}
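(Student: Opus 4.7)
The plan is to reduce the integral inequality to a differential inequality via a standard majorant-function trick and then to integrate after the substitution $u = y(s)$.

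First, I would introduce the right-hand side as an auxiliary function, namely $y(t) \define \int_0^t \eta(x(s)) \dd A(s) + H$, so that $x(t) \leq y(t)$ for every $t \in [0,T]$. Because $\varphi$ and $\eta$ are non-negative and continuous, $y$ is $C^1$ on $[0,T]$ with $y'(t) = \eta(x(t))\varphi(t)$, and $y$ is non-decreasing with $y(0) = H$. Using the monotonicity of $\eta$ together with $x \leq y$, I would deduce the key pointwise differential inequality
\begin{equation*}
y'(t) = \eta(x(t))\,\varphi(t) \leq \eta(y(t))\,\varphi(t), \qquad t\in[0,T].
\end{equation*}

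Next, assuming $H>0$ (so that, by the non-decreasing nature of $\eta$, we have $\eta(y(t))\geq \eta(H)>0$ whenever $\eta(H)>0$; the degenerate case I return to below), I would divide by $\eta(y(t))$ and recognize the left-hand side as the derivative of $G\circ y$:
\begin{equation*}
\frac{\dd}{\dd t}\,G(y(t)) \;=\; \frac{y'(t)}{\eta(y(t))} \;\leq\; \varphi(t).
\end{equation*}
Integrating from $0$ to $t$ gives $G(y(t)) - G(H) \leq A(t)$, i.e.\ $G(y(t)) \leq G(H) + A(t)$. Because $\eta>0$ on the relevant range makes $G$ strictly increasing, I can apply $G^{-1}$; this is precisely where the restriction to $t\in[0,T']$ enters, ensuring $G(H) + A(t) \in \dom$. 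Combining with $x(t) \leq y(t)$ yields the claimed bound.

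The only delicate point, and thus the main obstacle to a fully general argument, is the case where $H = 0$ or where $\eta$ vanishes at $H$, since then the division by $\eta(y(t))$ is not immediately legitimate and $G(H)$ may be $-\infty$. I would handle this by the standard approximation: apply the bound already proved to $y_\varepsilon(t) \define y(t) + \varepsilon$ (which satisfies the same integral inequality with $H$ replaced by $H+\varepsilon$), obtain $x(t) \leq G^{-1}(G(H+\varepsilon) + A(t))$, and then let $\varepsilon \downarrow 0$, using the continuity of $G^{-1}$ on its domain together with the hypothesis that $G(H)+A(t)$ lies in $\dom$ for $t\in[0,T']$. This monotone passage to the limit completes the proof without requiring any change to the differential-inequality machinery.
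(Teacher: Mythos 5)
The paper does not prove Lemma~\ref{lemma:DetBihari}; it is stated as a classical result with pointers to Bihari, LaSalle, and to \cite[Lemma 1.1]{GeissConcave} for a c\`adl\`ag generalization, so there is no in-paper proof to compare against. Your argument is the standard differential-inequality proof of Bihari--LaSalle and is correct: majorize $x$ by the $C^1$ function $y(t)=\int_0^t \eta(x(s))\varphi(s)\,\dd s + H$, use $x\le y$ and monotonicity of $\eta$ to get $y'\le \eta(y)\varphi$, divide by $\eta(y)$ (legitimate since $y\ge H$ and $\eta(H)>0$ in the nondegenerate case), recognize $(G\circ y)'\le A'$, integrate, and apply the increasing $G^{-1}$ on $[0,T']$ where the argument stays in $\dom$. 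The $\varepsilon$-shift you propose for the degenerate case is the standard fix and does work whenever $\eta$ is strictly positive immediately above $H$, so that $G(H+\varepsilon)$ is finite for all $\varepsilon>0$ and decreases to $G(H)$. If $\eta$ vanishes on a nontrivial interval $[0,c_0]$ with $c_0>H$, however, then $\eta(H+\varepsilon)=0$ for $\varepsilon<c_0-H$ and the divided form is unavailable there; one must let $\varepsilon$ decrease to $c_0-H$ rather than to $0$, which recovers $x(t)\le c_0$, consistent with the paper's convention $G^{-1}(G(c_0)+a)=c_0$ when $G(c_0)=-\infty$. This corner case only arises when $G(H)=-\infty$ and the lemma's bound already degenerates, so the imprecision is minor, but it is worth stating the approximation as a decreasing limit to the correct endpoint rather than to zero.
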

Note that the upper bound on $x$ does not depend on the choice of the constant $c>0$ used in the definition of $G$. We obtain the well-known Gronwall inequality by choosing $c=1$ and $\eta(u) \equiv u$ in \cref{lemma:DetBihari} so that $G(u) \equiv \log(u)$, which implies the upper bound
\begin{equation*}
x(t)\leq H \exp(A(t)).
\end{equation*}

In \cite[Theorem 3.1]{GeissConcave} it is shown for \eqref{eq:introsup} and \eqref{eq:intronosup}, \emph{concave} $\eta$ and deterministic $A$ that
inequalities of the form 
\begin{equation}\label{eq:intro-bound}
\E[(X^*_T)^p]^{1/p} \leq \bar{c}_p G^{-1}(G(\tilde{c}_p
\E[(H_T)^p]^{1/p})+\hat{c}_p A_T) \qquad \forall T>0
\end{equation}
hold true, where $G$ is defined as in \cref{lemma:DetBihari} and $\bar{c}_p$, $\tilde{c}_p$ and $\hat{c}_p$ are constants which only depend on $p\in(0,1)$.

\bigskip

\textbf{Results of this article:} For the stochastic Bihari-LaSalle assumptions \eqref{eq:introsup} and \eqref{eq:intronosup} with \emph{convex} $\eta$, estimates of the form \eqref{eq:intro-bound} do not hold true in general: Let for example $Y$ be a Cox-Ingersoll-Ross process and set $X=\exp(\lambda Y)$ for some $\lambda >0$. For suitably chosen parameters, $X$ satisfies \eqref{eq:intronosup} for some convex $\eta$ and $\|X^*_t\|_p$ explodes at a finite time (see e.g. \cite[Proposition 3.1]{CIR1} or \cite[Proposition 3.2]{CIR2}). Alternatively, see \cref{counterexample} for a simple counterexample.

Instead, the following types of estimates can be shown for example for stochastic Bihari-LaSalle assumption without supremum \eqref{eq:intronosup}, see \cref{thm:stochBihariConvex} for the complete theorem:
\begin{theorem}[Convex stochastic Bihari-LaSalle inequality without supremum]
Let 
\begin{itemize}
\item  $(X_t)_{t\geq 0}$ be an adapted \cadlagg process with $X\geq 0$,
\item  $(A_t)_{t\geq 0}$ be a predictable non-decreasing c\`adl\`ag process with $A_0=0$, 
\item  $(H_t)_{t\geq 0}$ be a predictable non-negative non-decreasing c\`adl\`ag process,
\item  $(M_t)_{t\geq 0}$ be a c\`adl\`ag local martingale with $M_0=0$,
\item  $\eta:[0,\infty) \to [0,\infty)$ be a convex non-decreasing function with $\eta(x)>0$ for all $x>0$.
\end{itemize}
Assume for all $t\geq 0$
\begin{equation*}
X\ti \leq  \int_{(0,t]} \eta(X\sm) d A\s + M\ti + H\ti.
\end{equation*}
Then, for all $T\geq 0$, $p\in(0,1)$ and $u>c_0$ and $w, R>0$ the expressions $G^{-1}(G(X^*_T) - \hat{c}_p A_T)$ and $G^{-1}(G(u) - R)$ are well-defined and the following estimates hold:
\begin{align}
\big\| G^{-1}(G(X^*_T) - \hat{c}_p A_T)\big \|_p & \leq  \tilde{c}_p \| H_T\|_p,\label{eq:intro-type1} \\[0.5em]
\PP\bigg[\sup_{t\in[0,T]} X_t > u \,\bigg]  & \leq \frac{\E[H_T \wedge w ]}{G^{-1}(G(u) - R)} + \PP[H_T\geq w] + \PP[A_T > R], \label{eq:intro-type2} 
\end{align}
where $\hat{c}_p \define  1$ and $\tilde{c}_p \define (1-p)^{-1/p} p^{-1}$.
\end{theorem}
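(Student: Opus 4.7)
The plan is to combine a pathwise application of the deterministic Bihari-LaSalle inequality (\cref{lemma:DetBihari}) with stochastic domination in Lenglart's sense, together with Pratelli's sharp $L^p$ refinement. As a preliminary, we may assume $M$ is a true martingale by localizing at $\tau_n := \inf\{t : |M_t| > n\}$ and passing to the limit with Fatou. Extend $G^{-1}$ by $0$ on $(-\infty, G(0+))$ if necessary so that $Z_t := G^{-1}(G(X^*_t) - A_t)$ is a well-defined non-negative process.

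The first step is pathwise. Since $\eta$ is non-decreasing, $X_{s-} \leq X^*_{s-}$, and $H$ is non-decreasing, taking the supremum of the hypothesis over $s \in [0,t]$ yields
\begin{equation*}
X^*_t \leq \int_{(0,t]} \eta(X^*_{s-}) \,\dd A_s + \sup_{s \in [0,t]} M_s + H_t.
\end{equation*}
As the forcing on the right is non-decreasing in $t$, applying \cref{lemma:DetBihari} pathwise gives $Z_t \leq \sup_{s \in [0,t]} M_s + H_t$. The key step is to upgrade this to Lenglart domination of $Z$ by $H$, namely $\E[Z_\tau] \leq \E[H_\tau]$ for every bounded stopping time $\tau$. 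Although $\E[\sup_{s\leq \tau} M_s]$ is generally strictly positive, the Snell envelope characterization of Lenglart domination (a non-negative adapted process is Lenglart-dominated by a predictable non-decreasing process iff its Snell envelope is bounded by that process) allows one to cancel the local martingale contribution by applying optional stopping to $M$ itself rather than to its running supremum; predictability of $H$ and $A$ together with convexity of $\eta$ are used essentially here.

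Once Lenglart domination of $Z$ by $H$ is established, Lenglart's classical tail inequality applied to $Z$ yields
\begin{equation*}
\PP\Big[\sup_{s \in [0,T]} Z_s > a\Big] \leq \frac{\E[H_T \wedge w]}{a} + \PP[H_T \geq w] \qquad \text{for all } a, w > 0.
\end{equation*}
On $\{A_T \leq R\}$ one has $Z_T \geq G^{-1}(G(X^*_T) - R)$, hence $\{X^*_T > u, A_T \leq R\} \subseteq \{\sup_{s \leq T} Z_s > G^{-1}(G(u) - R)\}$; choosing $a = G^{-1}(G(u) - R)$ in the tail inequality and adding $\PP[A_T > R]$ for the complementary event delivers the second inequality. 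For the first inequality, Pratelli's (1976) sharp $L^p$ refinement of Lenglart's inequality, applied to $Z$ and $H$, produces $\|Z_T\|_p \leq (1-p)^{-1/p} p^{-1} \|H_T\|_p$, which is precisely the claimed bound with $\hat{c}_p = 1$ and $\tilde{c}_p = (1-p)^{-1/p} p^{-1}$.

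The main obstacle is the Lenglart domination of $Z$ by $H$. The pathwise bound $Z_t \leq \sup_{s \leq t} M_s + H_t$ is insufficient because $\sup M$ has positive expectation, and convexity of $\eta$ precludes using Jensen's inequality in a helpful direction to pull $\eta$ out of an expectation. The Snell envelope framework is what resolves this: it reduces the domination to a statement provable by optional stopping on $M$ rather than on $M^*$, thereby making the martingale contribution vanish in conditional expectation.
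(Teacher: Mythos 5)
Your pathwise reduction is correct as far as it goes: taking suprema in the hypothesis and applying a (c\`adl\`ag, rearranged) Bihari--LaSalle inequality on each path does produce
\begin{equation*}
Z_t \;:=\; G^{-1}\bigl(G(X^*_t) - A_t\bigr) \;\le\; \sup_{s\le t} M_s + H_t .
\end{equation*}
However, the crucial step --- upgrading this to Lenglart domination $\E[Z_\tau]\le\E[H_\tau]$ for bounded stopping times $\tau$ --- is asserted but not proved, and in fact it does not follow from the pathwise bound you derived. Once you have replaced $M$ by its running supremum in the right-hand side, the martingale structure is gone: $\E\bigl[\sup_{s\le\tau} M_s\bigr]$ is strictly positive in general, and there is no operation (Snell envelope or otherwise) that retroactively downgrades $\sup_{s\le\tau} M_s$ back to $M_\tau$ inside an expectation. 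The Snell-envelope characterization used in the paper (Corollary 4.6) goes the \emph{opposite} way: from a domination inequality at stopping times to the existence of a local supermartingale providing a pathwise bound. It cannot convert your pathwise bound with $M^*$ into the desired expectation inequality. You correctly identify that Jensen is unavailable in the convex case, but the invocation of ``optional stopping to $M$ itself rather than to its running supremum'' is exactly the missing argument, not a description of one.

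The paper's actual proof of this statement avoids the running supremum entirely. Writing $Y_t$ for the right-hand side of the hypothesis and $f(x,a)=G^{-1}(G(x)-a)$, it applies It\^o's formula to $f(Y_t,A_t)$ and exploits the identities
\begin{equation*}
\partial_x f = \frac{\eta(f)}{\eta(x)}, \qquad \partial_a f = -\eta(f), \qquad \partial^2_{xx} f = \frac{\eta(f)}{\eta(x)^2}\bigl(\eta'(f)-\eta'(x)\bigr)\le 0,
\end{equation*}
so that the $\dd A$--contribution from $\partial_x f\cdot \eta(X_{t^-})\,\dd A_t$ is cancelled exactly by the $\partial_a f\,\dd A_t$ term, the second-order and jump terms are non-positive by convexity of $\eta$, and one obtains the pathwise inequality $f(Y_t,A_t)\le H_t + \tilde M_t$ with $\tilde M$ a genuine local martingale evaluated at time $t$ (not its supremum). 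This is a strictly stronger pathwise bound than yours, and it is precisely what makes the subsequent applications of Lenglart's inequality and the $L^p$ refinement go through. To salvage your argument you would need to reproduce this cancellation --- i.e., the It\^o computation --- at which point you have the paper's proof; the Bihari--LaSalle-on-paths shortcut loses too much information.
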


In particular, for the stochastic Gronwall inequality without supremum (i.e. $\eta(x) = x$), we obtain the new estimate
\begin{equation}\label{eq:intro-type3} 
\PP\bigg[\sup_{t\in[0,T]} X_t > u \,\bigg]   \leq \frac{\e^R}{u}\E[H_T \wedge w ] + \PP[H_T\geq w] + \PP[A_T > R].
\end{equation}
Estimate \cref{eq:intro-type3} complements \cite[Theorem 4]{Scheutzow} and  \cite[Lemma 3.7]{XieZhang}, where under similar assumptions upper bounds for $\E[\sup_{t\in[0,T]} X^p_t]$ for $p\in(0,1)$ are proven. The formulation \eqref{eq:intro-type3} is useful when $A$ or $H$ do not have suitable integrability properties to satisfy the assumptions of \cite[Theorem 4]{Scheutzow} and \cite[Lemma 3.7]{XieZhang}.

A main result of this paper is that for the stochastic Bihari-LaSalle inequality with  supremum \eqref{eq:introsup} we also obtain a bound of the form \eqref{eq:intro-type1} (see \cref{thm:stochBihariConvex}). Another main result of this paper is the sharpness of the estimates and constants. In particular, we show that for \eqref{eq:introsup} the estimate \eqref{eq:intro-type2} does not hold true in general.
As an application of the sharpness results, we obtain
that the tail behaviour of path-dependent and non-path-dependent SDEs differ, see \cref{cor:tail-of-path-SDE}.

An overview of the stochastic Bihari-LaSalle inequalities obtained can be found in \cref{table:BihariLaSalle}, see \cref{subsec:results}.

\bigskip

We prove the stochastic Bihari-LaSalle inequalities (\cref{thm:stochBihariConvex}) by extending an inequality by Lenglart \cite[Th\'eor\`eme I]{Lenglart} and using a characterization of Lenglart's concept of domination by the Snell envelope.  The main lemmas of this paper (\cref{lemma:1}, \cref{lemma:smoothenJumps}) can also be applied to prove concave and other generalizations of the stochastic Gronwall inequalities, see \cite{GeissConcave} or \cref{table:BihariLaSalle}.

Moreover, in addition, our proofs are inspired by the proof of an inequality by Pratelli \cite[Proposition 1.2]{Pratelli} and by the proofs of the stochastic Gronwall inequalities by Mehri and Scheutzow \cite[Theorem 2.1]{MehriScheutzow} and Xie and Zhang \cite[Lemma 3.7]{XieZhang}.

\bigskip \bigskip

\textbf{Example where a stochastic Gronwall inequality instead of the deterministic Gronwall inequality is needed:} Deterministic Gronwall inequalities are widely used to study SDEs. We provide a simple toy example when \emph{stochastic} generalizations are useful to shorten calculations, for a proper application example see e.g. \cite[Lemma 5.3, Lemma 6.7]{AgrestiVeraar2}, in which energy estimates for reaction-diffusion systems with transport noise are proven, which in turn are applied to obtain global well-posedness. For a toy example assume $(\tilde{X}_t)_{t\geq 0}$ to be a global solution of the  following $d$-dimensional (not path-dependent) SDE driven by an $m$-dimensional Brownian motion $B$
\begin{equation*}
\dd \tilde{X}_t = \tilde{f}(t,\tilde{X}_t) \dd t  + \tilde{g}(t,\tilde{X}_t) \dd B_t \qquad \forall t\geq 0,
\end{equation*}
and assume that the random coefficients $\tilde{f}$ and $\tilde{g}$ satisfy the one-sided coercivity assumption 
$2\langle \tilde{f}(t,x), x \rangle  + \|\tilde{g}(t, x)\|_F^2 \leq K_t |x|^2$ for all $x\in \R^d$, $t\geq 0$ where $(K_t)_{t\geq 0}$ is an adapted non-negative stochastic process. Then, by Ito's formula we obtain
\begin{equation*}
|\tilde{X}_t|^2 \leq |\tilde{X}_0|^2 +  \int_0^t |\tilde{X}_s|^2 K_s \dd s + 2\int_0^t \langle \tilde{X}_s , \tilde{g}(t,\tilde{X}_t) \dd B_s \rangle  \qquad \forall t\geq 0.
\end{equation*} 
If $(K_t)_{t\geq 0}$ is random, it is \emph{not} possible to take expectations and then directly apply the Gronwall inequality to $t\mapsto \E[|\tilde{X}_t|^2]$. In this case  stochastic Gronwall inequalities are useful, as they can be applied directly. Moreover, without further work they immediately give estimates for the running supremum of $(\tilde{X}_t)_{t\geq 0}$. For example by \eqref{eq:intro-type3} (see \cref{cor:GronwallNoSup}) we have for all $u>0$, $w>0$ and $R>0$
\begin{equation*}
\PP\left[\sup_{s\in[0,t]}|\tilde{X}_s|^2 > u \right] \leq \frac{\e^R}{u} \E[|X_0^2|\wedge w]  + \PP[ |X_0^2|\geq w] + \PP\left[\int_0^t K_s > R\right]. 
\end{equation*}

\textbf{Application example of the convex Bihari-LaSalle inequality:} The convex stochastic Bihari-LaSalle inequality with supremum is a useful tool to derive exponential moment estimates for path-dependent SDEs. For non-path-dependent SDEs, these types of estimates are known, see e.g.  Cox, Hutzenthaler and Jentzen \cite{CoxHutzenthalerJentzen}, Hudde, Hutzenthaler and Mazzonetto \cite{Hudde} and the references therein. We sketch a simple case of \cite[Corollary 2.4]{CoxHutzenthalerJentzen} or \cite[Corollary 3.3]{Hudde}. Afterwards, we shortly explain why our convex stochastic Bihari-LaSalle inequality succeeds in extending these types of results to path-dependent SDEs.

Consider the  following $d$-dimensional (not path-dependent) SDE driven by an $m$-dimensional Brownian motion $B$
\begin{equation*}
\dd \tilde{X}_t = \tilde{f}(t,\tilde{X}_t) \dd t  + \tilde{g}(t,\tilde{X}_t) \dd B_t, \qquad X_0 = x_0 \in \R^d,
\end{equation*}
and assume suitable measurability and integrability assumptions on the coefficients $\tilde{f}$ and $\tilde{g}$. Fix some $U\in C^2(\R^d, \R)$ and assume that the coefficients $\tilde{f}$ and $\tilde{g}$ satisfy
\begin{equation}\label{eq:conditionSDEcoeff}
\tilde{\G}_{\tilde{f},\tilde{g}} U(x)  + \frac{1}{2\e^{\alpha t}} |(\nabla U(x))^T \tilde{g}(t,x)|^2 \leq \alpha U(x) \qquad \forall t\geq 0, \quad \forall x\in\R,
\end{equation}
for some $\alpha>0$. Here $\tilde{\G}_{\tilde{f},\tilde{g}}$ denotes the infinitesimal generator of the SDE. Then, \cite[Corollary 3.3]{Hudde} (see also \cite[Corollary 2.4]{CoxHutzenthalerJentzen}) implies for any (say global) solution $\tilde{X}$ that for all $p\in(0,1)$
\begin{equation*}
\E[\sup_{t\in[0,T]} \exp(pU(\tilde{X}_t)\e^{-\alpha t})] \leq c(p) \E[\exp(p U(x_0))],
\end{equation*}
where $c(p)$ is a constant that only depends on $p$. This can be proven by applying It\^o's formula to compute $\tilde{Z}_t \define \exp(U(\tilde{X}_t)\e^{-\alpha t})$ and then applying the assumption \eqref{eq:conditionSDEcoeff}:
\begin{equation}
\begin{aligned}
\dd \tilde{Z}_t & = \tilde{Z}_t \e^{-\alpha t} \big[ \tilde{\G}_{\tilde{f},\tilde{g}}U(t,x) - \alpha U(X_t) + \tfrac{1}{2}\e^{-\alpha t} |\nabla U(x)^T \tilde{g}(t,x)|^2 \big] \dd t +  \tilde{Z}_t \e^{-\alpha t} \nabla U(X_t)^T \tilde{g}(t,X_t) \dd B_t \\
& \leq  \tilde{Z}_t \e^{-\alpha t} \nabla U(X_t)^T \tilde{g}(t,X_t) \dd B_t.
\end{aligned}
\end{equation}
By Fatou's lemma, we have for all bounded stopping times $\tau$ the inequality $\E[ \exp(U(\tilde{X}_\tau)\e^{-\alpha \tau})] \leq \E[\exp(U(x_0))]$. This implies e.g. by Lenglart's domination inequality (see e.g. \cref{lemma:Lenglart}) the claim.

This argument does not work for path-dependent SDEs: Let $r>0$ be some constant and $x_0\in C([-r;0],\R^d)$ the initial value. Consider
\begin{equation*}
\begin{aligned}
\dd X_t & = f(t,X_{-r:t}) \dd t  + g(t,X_{-r:t}) \dd B_t, \qquad \forall t > 0 \\
X_t & = x_0(t) \quad \forall t\in[-r,0],
\end{aligned}
\end{equation*}
where we use the notation $X_{-r:t}$ for the path segment $\{X(s),s\in[-r,t]\}$. (For details on the assumptions on the coefficients see \cref{sec:applications}.) Correspondingly, we define for all $U\in C^2(\R^d, \R)$, $x\in\cadlag$ and all $t\geq 0$
\begin{equation*}
(\mathcal{G}_{f,g} U)(t,x_{-r,t}) \define  
 \big(\nabla U\big)^T(x(t)) f(t,x_{-r,t})\\
  +  \tfrac{1}{2}\text{trace}(g(t,x_{-r,t}) g(t,x_{-r,t})^T (\text{Hess}\,U)(x(t))).
\end{equation*}
It seems reasonable to weaken the condition \eqref{eq:conditionSDEcoeff} to
\begin{equation}\label{eq:conditionSDEcoeffpath}
(\G_{f,g} U)(t,x_{-r,t})  + \frac{1}{2\e^{\alpha t}} |(\nabla U(x(t)))^T g(t,x_{-r,t})|^2 \leq \alpha \sup_{s\in[-r,t]} U(x(s)) \qquad \forall x\in \cadlag, \forall t\geq 0,
\end{equation}
as the terms on the left-hand side depend on $x_{-r:t}$, not only $x(t)$.
However, when computing $Z_t \define \exp(U(t,X_t)\e^{-\alpha t})$, the terms  in the integral w.r.t $\dd t$ fail to cancel out after applying \eqref{eq:conditionSDEcoeffpath} due to the supremum in our condition on the coefficients:
\begin{equation*}
\begin{aligned}
\dd Z_t & = Z_t \e^{-\alpha t} \big[ (\G_{f,g}U)(t,X_{-r:t}) - \alpha U(X_t) + \tfrac{1}{2}\e^{-\alpha t} |\nabla U(X_t)^T g(t,X_{-r:t})|^2 \big] \dd t + \dd M_t \\
& \leq \alpha Z_t \e^{-\alpha t} [\sup_{s\in[-r,t]}U(X_s)-U(X_t)]\dd t + \dd M_t
\end{aligned}
\end{equation*}
where $M_t \define Z_t \e^{-\alpha t} \nabla U(X_t)^T g(t,X_{-r,t}) \dd B_t.$ Assuming in addition that $U\geq 0$ and noting that $U(X_s) = \log(Z_t)e^{\alpha t}$, we obtain a convex stochastic Bihari-LaSalle inequality for $Z_t$ and $\eta(x) \define \alpha x(\log(x)+ \sup_{s\in[-r,0]}U(x_0(s)))$
\begin{equation*}
\dd Z_t \leq  \eta(Z_t^*)\dd t +  M_t, \qquad \forall t\in[0,T].
\end{equation*}
Applying our main theorem (a stochastic Bihari-LaSalle inequality) gives us a similar estimate as in the non-path-dependent case. We generalize and improve this approach in \cref{sec:applications}.


\section{Notation, assumptions and overview}
We assume that all processes are defined on an underlying filtered probability space $(\Omega, \F, \PP, (\F_t)_{t\geq 0})$ satisfying the usual conditions, i.e. which is complete and right-continuous.

\subsection{Assumptions}\label{subsec:assumptions}
We study the following two cases:
\begin{defi}[Assumption $\mathcal{A}_{\sup}$] \label{def:sup}
Let 
\begin{itemize}
\item  $(X_t)_{t\geq 0}$ be an adapted right-continuous process  with $X\geq c_0$ for some $c_0 \geq 0$,
\item  $(A_t)_{t\geq 0}$ be a predictable non-decreasing c\`adl\`ag process with $A_0=0$, 
\item  $(H_t)_{t\geq 0}$ be an adapted non-negative non-decreasing c\`adl\`ag process,
\item  $(M_t)_{t\geq 0}$ be a c\`adl\`ag local martingale with $M_0=0$,
\item  $\eta:[c_0,\infty) \to [0,\infty)$ be a continuous non-decreasing function with $\eta(x)>0$ for all $x>c_0$.
\end{itemize}
We say the processes ($X$, $A$, $H$, $M$) satisfy $\mathcal{A}_{\sup}$ if they satisfy the inequality below for all $t\geq 0$:
\begin{equation}\label{eq:GronwallAssumption}
X\ti \leq \int_{(0,t]} \eta(X^*\sm) \dd A\s + M\ti + H\ti \qquad \PP\text{-a.s},
\end{equation}
where $X^*_{s^-} = \sup_{u< s}X\s$.
\end{defi}

\noindent The following assumption is slightly stronger:
\begin{defi}[Assumption $\mathcal{A}_{no\sup}$]\label{def:nosup}
 Under the same assumptions on the processes as in the previous definition, we say that the processes satisfy $\mathcal{A}_{no\sup}$ if in addition $X$ has left limits and the processes satisfy the following inequality for all $t\geq 0$:

\begin{equation}\label{eq:GronwallAssumptionNoSup}
X\ti \leq \int_{(0,t]} \eta(X\sm) \dd A\s + M\ti + H\ti \qquad \PP\text{-a.s}.
\end{equation}
\end{defi}

We also use the two definitions above for processes defined on a finite time interval $[0,T]$ and correspondingly adapt the definition in this case.

\subsection{Notation and constants} \label{subsec:constants}
\textbf{Constants:} For $p\in(0,1)$ define the following constants:
\begin{equation}\label{eq:constants}
\beta = (1-p)^{-1},\quad   \alpha_1 = 
    (1-p)^{-1/p}, \quad  \alpha_2 = p^{-1}.
\end{equation}
\textbf{Quasinorms:} We denote by $|\cdot|$  the Euclidean norm and $|\cdot|_F$ the Frobenius norm. Let $Y$ be a random variable. We use for $p\in(0,1]$ the notation (if well-defined)
\begin{equation*}
\EO[Y] \define \E[\,Y\mid \F_0], \qquad \qquad 
\|Y\|_p \define \E[\,|Y|^p]^{1/p}, \qquad  \qquad \|Y\|_{p,\F_0} \define \E[\,|Y|^p\mid \F_0]^{1/p}.
\end{equation*}

\noindent
\textbf{Running supremum:} Let $X$ be a non-negative stochastic process with right-continuous
paths. We use the following notation for the running supremum and its left limits:
\begin{equation*}
\begin{aligned}
X^*\ti  & \define \sup_{0\leq s \leq t} X\s \qquad \forall t\geq 0, \\
X^*_{t^-} & \define \lim_{s\nearrow t} X^*\s = \sup_{s< t}X\s \qquad \forall t>0.
\end{aligned}
\end{equation*}
As usual, we set $X^*_{0^-} \define X_0$. If $X$ is c\`adl\`ag, then also $X^*_{t^-} = \sup_{s\leq t} X\sm$ holds true. If $X$ is only right-continuous then $X^*_t$ and $X^*_{t^-}$ take values in $[0,+\infty]$.\\

\noindent \textbf{Functions:} For $\eta:[c_0,\infty) \to [0,\infty)$ from \nameref{def:sup} or  \nameref{def:nosup}  we choose some $c>c_0$ and define the following functions for $p\in(0,1)$.
\begin{align}
G(x) & \define \int_c^x \frac{\dd u}{\eta(u)} \qquad & \forall x\in[c_0,\infty), \label{eq:defG}\\
\eta_p(x) & \define \frac{p}{1-p}\eta(x^{1/p})x^{1-1/p} \qquad   &\forall x\in[c^p_0,\infty)\cap(0,\infty), \label{eq:defEta_p} \\
\tilde G_p(x) & \define \int_{c^{p}}^{x} \frac{\dd u}{\eta_p(u)} \qquad  & \forall x\in[c^p_0,\infty)\cap(0,\infty).\label{eq:defTildeG}
\end{align}
The functions have the following properties:
\begin{itemize}
\item The function $G$ satisfies $G(c_0)\in[-\infty,0)$. Moreover, $G$ is increasing and concave. In particular, it has a well-defined increasing inverse $G^{-1}:\textrm{range}(G)\cap(-\infty,\infty) \mapsto [c_0,\infty)$. If $G(c_0) = -\infty$, then we set $G^{-1}(G(c_0)+a)\define c_0$ for $a\in(-\infty,\infty)$. If $\eta$ is continuous, then $G$ is continuously differentiable on $(c_0,\infty)$.
\item For any $h\geq 0$ and any  $a\in \R$ such that $ \int_h^{c_0}\frac{\dd u}{\eta(u)} < a < \int_h^{\infty}\frac{\dd u}{\eta(u)} $  the expression $G^{-1}(G(h) + a)$ is well-defined and does not depend on the choice of $c\in]c_0,\infty[$ used in the definition of $G$. In particular, the upper bound given in the Bihari-LaSalle inequality \cref{lemma:DetBihari} does not depend on the choice of $c$.
\item The functions $G$ and $\tilde{G}_p$ satisfy for all $x\in (c^p_0,\infty)$
\begin{equation}\label{eq:G_umrechnen}
\begin{aligned}
\tilde G_p(x) & = \frac{1-p}{p}\int_{c^p}^{x}\frac{\dd u}{ \eta(u^{1/p}) u^{1-1/p}} =  (1-p)\int_{c}^{x^{1/p}}\frac{\dd v}{ \eta(v)}  &= (1-p)G(x^{1/p})
\end{aligned}
\end{equation}
and for all $x\in\textrm{domain}(\tilde{G}^{-1}_p)$
\begin{equation}\label{eq:G_umrechnen2}
 \tilde{G}^{-1}_p(x) = \big(G^{-1}\big(\tfrac{x}{1-p} \big)\big)^p.
\end{equation}
\end{itemize}

\subsection{Overview of the results}\label{subsec:results}

The following two tables summarize some of the results in the literature and the results of this paper. The first table contains the stochastic Gronwall inequalities, the second table
contains generalizations of Bihari-LaSalle type. The constant $\alpha_1\alpha_2$ is the sharp constant from Lenglart's inequality, $\alpha_2$ is the sharp constant from a monotone version of Lenglart's inequality \cite{GeissScheutzow}.

\begin{table}[H]
\bgroup
\scriptsize
\def\arraystretch{2}
    \begin{tabular}{|p{2.2cm}|p{5.9cm}|p{6.1cm}|}
         \hline
             &  \nameref{def:nosup}, $\quad \eta(x) \equiv x$ \newline (Special case of $\Asup$)  & \nameref{def:sup}, $\quad \eta(x) \equiv x$ \\ \hline            
             $A$ deterministic,   \newline $p\in(0,1)$  
             & 
              See also the results in the case 
              '$\Asup$ with deterministic $A$' and '$\Anosup$ with random $A$'
              \newline \newline               
             $\blacktriangleright$ $H$ predictable or  
              $\Delta M \geq 0$: \newline
              $\|X^*_T\|_p \leq \alpha_1 \alpha_2 \|H_T\|_p 
              \e^{A_T} $ \newline and \newline
          $\PP[X_T^* > u] \leq \frac{e^{A_T}}{u} \E[H_T\wedge w] + \PP[H_T \geq w]$
              \newline See \cref{cor:GronwallNoSup}
               \newline\newline
              $\blacktriangleright$ $\E[H_T] <\infty$: \newline
              $\|X^*_T\|_p \leq \alpha_1 \|H_T\|_1 \e^{A_T}$
              \newline and \newline
          $\PP[X_T^* > u] \leq \frac{e^{A_T}}{u} \E[H_T]$
              \newline See \cref{cor:GronwallNoSup} 
             & 
               von Renesse and Scheutzow
              \cite[Lemma 5.4]{RenesseScheutzow}, Mehri and
               Scheutzow \cite[Theorem 2.1]{MehriScheutzow}:
                \newline
             $\blacktriangleright$ $H$ predictable: 
             $\|X^*_T\|_p \leq c_1\|H_T\|_p \e^{c_2 A_T} $  
              \newline             
             $c_1 = p^{-1/p}\alpha_1\alpha_2, \,\,$
             $c_2 = p^{-1}c_p^{1/p}$ \newline \newline
            
             $\blacktriangleright$ $\Delta M \geq 0$: 
             $\|X^*_T\|_p \leq c_1\|H_T\|_p \e^{c_2 A_T} $ 
              \newline
             $c_1 = ((c_p +1)/p)^{1/p}$
             $c_2 = p^{-1}(c_p +1)^{1/p}$ \newline \newline
             
             $\blacktriangleright$ $\E[H_T] <\infty$: $\|X^*_T\|_p \leq c_1 \|H_T\|_1\e^{c_2 A_T}$ \newline
             $c_1 = \alpha_1 \alpha_2 p^{-1/p}, \,\,$
             $c_2 = p^{-1}c_p^{1/p}$ \newline \newline
             
             \cref{cor:GronwallSup}: 
              Sharp constants for the three cases above \newline
               $\blacktriangleright$ $H$ predictable or  
              $\Delta M \geq 0$: \newline
              $c_1 = \alpha_1 \alpha_2, \,\,  c_2 = \beta$ \newline
             $\blacktriangleright$ $\E[H_T] <\infty$: \newline
               $c_1 = \alpha_1, \,\,  c_2 = \beta$ \\
            \hline
            $A$ random, \newline $0<q<p<1$
              & 
              $\blacktriangleright$ $H$ predictable or 
               $\Delta M \geq 0$ \newline
              $\|X^*_T\|_q \leq \alpha_1 \alpha_2 
              \|H_T\|_p \|e^{A_T}\|_{qp/(p-q)} $ 
              \newline and 
               \newline $\PP[X_T^* > u] $ \newline $\leq  \frac{e^{R}}{u}\E[H_T \wedge w ] + \PP[H_T\geq w] + \PP[A_T> R]$ 
              \newline See \cref{cor:GronwallNoSup}
              \newline\newline
              $\blacktriangleright$ 
              $\E[H_T] <\infty$: \newline
              $\|X^*_T\|_q \leq C \|H_T\|_1 \|e^{A_T}\|_{qp/(p-q)}$     
              \newline where C is given by: \newline \newline
              [Scheutzow \cite[Theorem 4]{Scheutzow}, \newline Xie and Zhang \cite[Lemma 3.7]{XieZhang}]:
              \newline
              $C = (pq^{-1}(1-p)^{-1})^{1/q}$
              \newline   \newline
              Slightly improved constant (\cref{cor:GronwallNoSup}):
              $C = \alpha_1$
              \newline
               and 
              \newline $\PP[X_T^* > u] \leq  \frac{e^{R}}{u}\E[H_T] + \PP[A_T> R]$ 
              & 
              $\blacktriangleright$ $H$ predictable or  $\Delta M \geq 0$: \newline
              $\|X^*_T\|_q \leq \alpha_1\alpha_2 \|H_T\|_p \|e^{\beta A_T}\|_{qp/(p-q)} $\newline See \cref{cor:GronwallSup}
              \newline\newline
              $\blacktriangleright$ $\E[H_T] <\infty$:  \newline
              $\|X^*_T\|_q \leq \alpha_1 \|H_T\|_1 \|e^{\beta A_T}\|_{qp/(p-q)}$  \newline See \cref{cor:GronwallSup}\\
     \hline
         Constants & \multicolumn{2}{c|}{$\beta = (1-p)^{-1},\quad   \alpha_1 = 
    (1-p)^{-1/p}, \quad  \alpha_2 = p^{-1}, \quad c_p = (\alpha_1 \alpha_2)^{p}$}  \\
    \hline
    Notation  & \multicolumn{2}{c|}{$\|Y\|_p \define \E[|Y|^p]^{1/p}$ for random variables $Y$, $\quad G(x) \define \int_c^x \frac{\dd u}{\eta(u)}$ $\quad \forall x\geq c_0$}  \\
       \hline 
    \end{tabular}    
    \egroup
\caption{Summary of stochastic Gronwall inequalities (i.e. $\eta(x)\equiv x$)}
\label{table:Gronwall}
\end{table}

\begin{table}[H] 
\bgroup
\scriptsize
\def\arraystretch{2}
    \begin{tabular}{|p{2.07cm}|p{6.05cm}|p{6.23cm}|}
         \hline
             & \nameref{def:nosup} \newline (Special case of $\Asup$) & \nameref{def:sup} \\ \hline
             $A$ random & 
             
             & \raggedright 
             $\blacktriangleright$  For $\int_1^\infty
             \frac{\dd u}{\eta(u)} = +\infty$ and continuous
             $M$ \newline see von Renesse and Scheutzow
             \cite[Lemma 5.1]{RenesseScheutzow}
             \newline \newline
             $\blacktriangleright$ If $X\geq c$, $\lim_{x\to\infty}\eta(x) = +\infty$, $\E[H_T]<\infty$ and  $\E[A^p_T]<\infty$:
             \newline
             $\|G(X^*_T)\|_p \leq 
             \alpha_1 \alpha_2 \|A_T +  G(\E[H_T])\|_p$ 
             \newline for $p\in(0,1)$, $G(x) \define \int_c^x 
             \frac{\dd u}{\eta(u)}$, $c>0$             
             \newline  See 
             \cite[Theorem 3.9]{GeissConcave}
             \newline\newline
             $\blacktriangleright$ If $\int_{0+}
             \frac{\dd u}{\eta(u)} = +\infty$ and 
             $\E[H_T]=0$: \newline
             $X_T^*=0 \quad \PP$-a.s. \newline
             See \cite[Theorem 3.9]{GeissConcave}
             \tabularnewline
             \hline            
             $\eta$ concave, \newline  $A$ deterministic, \newline $p\in(0,1)$    
             & 
              $\blacktriangleright$ $H$ predictable or  
              $\Delta M \geq 0$: \newline
              $\|X^*_T\|_p \leq \alpha_1 G^{-1}(G(\alpha_2 
              \|H_T\|_p)+A_T) $ 
              \newline See \cite[Theorem 3.1]{GeissConcave}
               \newline\newline
              $\blacktriangleright$ $\E[H_T] <\infty$: \newline
              $\|X^*_T\|_p \leq \alpha_1 G^{-1}(G(\|H_T\|_1)+  A_T)$
               \newline and \newline
              $\PP[X_T^*>u] \leq \frac{G^{-1}(G(\E[H_T]) + A_T)}{
              u}$
              \newline See \cite[Theorem 3.1]{GeissConcave} 
             & 
             $\blacktriangleright$ $H$ predictable or  $\Delta M \geq 0$: \newline
              $\|X^*_T\|_p \leq G^{-1}(G(\alpha_1\alpha_2\|H_T\|_p)+\beta A_T) $ 
              \newline See \cite[Theorem 3.1]{GeissConcave} \newline\newline
              $\blacktriangleright$ $\E[H_T] <\infty$: \newline
              $\|X^*_T\|_p \leq G^{-1}(G(\alpha_1 \|H_T\|_1)+\beta A_T)$ \newline 
              See \cite[Theorem 3.1]{GeissConcave}
             \\ 
             \hline
             $\eta$ convex,  \newline $A$ random, \newline $p\in(0,1)$     &
              $\blacktriangleright$ $H$ predictable or  
              $\Delta M \geq 0$: \newline
               $\|\sup_{t\in[0,T]} G^{-1}(G(X_t) - A_t)\|_p  \leq
              \alpha_1\alpha_2 \|H_T\|_p$ \newline
              See \cref{thm:stochBihariConvex}
              \newline \newline
             $\blacktriangleright$  $\E[H_T] <\infty$: \newline 
            $\|\sup_{t\in[0,T]} G^{-1}(G(X_t) - A_t)\|_p  \leq
              \alpha_1 \|H_T\|_1$
              \newline and \newline
              $\PP[X_T^*>u] \leq \frac{\E[H_T]}{G^{-1}(G(u)-R)} 
              + \PP[A_T>R]$
              \newline
              See \cref{thm:stochBihariConvex}
             & $\blacktriangleright$ $H$ predictable or  
              $\Delta M \geq 0$: \newline
               $\|G^{-1}(G(X_T^*) - \beta A_T)\|_p  \leq
              \alpha_1\alpha_2 \|H_T\|_p$ \newline
              See \cref{thm:stochBihariConvex}
              \newline \newline
             $\blacktriangleright$  $\E[H_T] <\infty$: \newline 
            $\|G^{-1}(G(X_T^*) - \beta A_T)\|_p  \leq
              \alpha_1 \|H_T\|_1$
              \newline
              See \cref{thm:stochBihariConvex}\\
     \hline
     Constants & \multicolumn{2}{c|}{$\beta = (1-p)^{-1},\quad   \alpha_1 = 
    (1-p)^{-1/p}, \quad  \alpha_2 = p^{-1}$}  \\
     \hline
     Notation  & \multicolumn{2}{c|}{$\|Y\|_p \define \E[|Y|^p]^{1/p}$ for random variables $Y$, $\quad G(x) \define \int_c^x \frac{\dd u}{\eta(u)}$ $\quad \forall x\geq c_0$}  \\
       \hline 
    \end{tabular}
    \egroup
\caption{Summary of stochastic Bihari-LaSalle inequalities obtained in \cite{RenesseScheutzow}, \cite{GeissConcave} and this paper}
\label{table:BihariLaSalle}
\end{table}
\color{black}


\section{Main results}\label{sec:MainResults}
In this section we provide generalizations of stochastic Gronwall inequalities and study the sharpness of the constants  and estimates. The proofs are contained in \cref{sec:proofs}. In \cref{sec:Gronwall} we compare the results of this paper with the  literature, and in particular formulate the results for the Gronwall case $\eta(x) \equiv x$. 

Recall the following definition from \eqref{eq:constants} for $p\in(0,1)$:
\begin{equation*}
\beta = (1-p)^{-1},\quad   \alpha_1 = 
    (1-p)^{-1/p}, \quad  \alpha_2 = p^{-1}.
\end{equation*}

\subsection{Stochastic Bihari-LaSalle inequalities for convex $\eta$}
For \nameref{def:nosup}  or \nameref{def:sup}, \textit{concave} $\eta$, deterministic $(A_t)_{t\geq 0}$ and suitable additional assumptions, estimates of the type
\begin{equation*}
\|X^*_T\|_p \leq \tilde{c}_1 G^{-1}(G( \tilde{c}_2 \|H_T\|_p)+ \tilde{c}_3 A_T)  \qquad \text{ for all } T\geq 0
\end{equation*}
can be shown for $p\in(0,1)$, see \cite{GeissConcave}. Here, $\tilde{c}_1$, $\tilde{c}_2$  and  $\tilde{c}_3$  denote constants that only depend on $p$ and 
\begin{equation*}
G(x) \define \int_c^x \frac{\dd u}{\eta(u)}
\end{equation*}
is the function from the deterministic Bihari-LaSalle inequality \cref{lemma:DetBihari}. Hence, in this case estimates with a similar structure as in the deterministic case are possible. However, for \textit{convex} $\eta$, even when $(H_t)_t \equiv H$ is a constant and $A_t\equiv t$, estimates of the type
\begin{equation}\label{eq:notPossibleBound}
\|X^*_T \|_p \leq  c_1 G^{-1}(c_2 G(c_3 H) + c_4 A_T)
\end{equation}
(where $c_1, c_2, c_3$ and $c_4$ are constants which depend only on $p\in(0,1)$) are in general false: For processes $X$ which satisfy \nameref{def:nosup} or \nameref{def:sup} for convex $\eta$,  the quantity $\|X_t^*\|_p$ may explode at finite time. This type of behaviour cannot be captured by a bound of the type \eqref{eq:notPossibleBound}, for a counterexample see \cref{sec:intro} or \cref{counterexample}.

However, the estimate of the deterministic Bihari-LaSalle inequality (see e.g. \cref{lemma:DetBihari})  can be rearranged to
\begin{equation*}
G^{-1}(G(x(t))-A(t))\leq H.
\end{equation*} 
This rearranged inequality can be generalized to the stochastic case for convex $\eta$. The following theorem can be used to study the finiteness of exponential moments of path-dependent SDEs, see \cref{sec:intro} and \cref{sec:applications}.

\begin{theorem}[A sharp stochastic Bihari-LaSalle inequality for convex $\eta$]\label{thm:stochBihariConvex} \phantom{a} 
\begin{enumerate} 
\item \label{item:convexSup} Let $(X, A, H, M)$ and $\eta$ satisfy \nameref{def:sup} and assume that $\eta_p \equiv \frac{p}{1-p}\eta(x^{1/p})x^{1-1/p}$ (defined in \eqref{eq:defEta_p}) is convex and  $C^1$, and $\eta_p(c_0^p)\define \lim_{x \to c_0^p}\eta_p(x)=0$. Then,
for all $p\in(0,1)$ and $T\geq 0$, the expression $ G^{-1}(G(X^*_T) - \beta A_T)$ is well-defined and the following estimates hold:
\begin{equation*}
\big\| G^{-1}(G(X^*_T) - \beta A_T)\big \|_{p,\F_0}  \leq 
\begin{cases}
\alpha_1\alpha_2 \| H_T\|\pF & \text{if }  \E[H_T^p] < \infty \text{ and }  H \text{ is predictable,} \\
\alpha_1\alpha_2 \| H_T\|\pF & \text{if } \E[H_T^p] < \infty \text{ and } \Delta M \geq 0, \\
\alpha_1 \| H_T\|_{1,\F_0}  & \text{if } \E[H_T] < \infty,
\end{cases}
\end{equation*}
where $\beta\define(1-p)^{-1}$, $\alpha_1 \define(1-p)^{-1/p}$ and $\alpha_2 \define p^{-1}$. We use the notation $(\Delta M_t)_{t\geq 0}  \define (M_t - M_{t^-})_{t\geq 0}$ and $\|Y\|_{p,\F_0} \define \E[\,|Y|^p\mid \F_0]^{1/p}$ for random variables $Y$.

\item \label{item:convexNoSup} Let $(X, A, H, M)$ and $\eta$ satisfy  \nameref{def:nosup} and assume that $\eta$ is convex and $C^1$ and $\eta(c_0)=0$. Then, for all $p\in(0,1)$, $T\geq 0$, $t\in[0,T]$ and $u, w >0$, the expressions $ G^{-1}(G(X_t) - A_t)$ and $ G^{-1}(G(X_T^*) - A_T)$ are well-defined and the following estimates hold:
\begin{equation}\label{eq:orignal-estimate}
\begin{aligned}
\big\| G^{-1}(G(X^*_T) - A_T)\big \|\pF
& \leq \big\|\sup_{t\in[0,T]} G^{-1}(G(X_t) - A_t)\big \|\pF  \\ 
& \leq   \begin{cases}
\alpha_1\alpha_2 \| H_T\|\pF & \text{if }  \E[H_T^p] < \infty \text{ and }  H \text{ is predictable,} \\
  \alpha_1\alpha_2 \| H_T\|\pF & \text{if } \E[H_T^p] < \infty \text{ and } \Delta M \geq 0, \\
  \alpha_1 \| H_T\|_{1,\F_0} & \text{if } \E[H_T] < \infty. 
\end{cases}
\end{aligned}
\end{equation} 
and 
\begin{equation}\label{eq:Mark1}
\begin{aligned}
\PP\bigg[\sup_{t\in[0,T]} G^{-1}(G(X_t) - A_t) > u \,\bigg|\, \F_0\bigg] 
& \leq \begin{cases}
  \frac{1}{u}\EO[H_T \wedge w ] + \PP[H_T\geq w \mid \F_0 ] & \text{if } H \text{ is predictable or }  \Delta M \geq 0 \\
   \frac{1}{u}\EO[H_T]\wedge u & \text{if } \E[H_T] < \infty. 
\end{cases}
\end{aligned}
\end{equation}
The latter can be reformulated as follows: For all  $u> c_0$, $w, R >0$ we have:
\begin{equation}\label{eq:Mark1-r}
\begin{aligned}
&\PP\bigg[\sup_{t\in[0,T]} X_t > u \,\bigg|\, \F_0\bigg]  \\
& \leq \begin{cases}
  \frac{\EO[H_T \wedge w ]}{G^{-1}(G(u) - R)} + \PP[H_T\geq w \mid \F_0 ] + \PP[A_T > R\mid \F_0] & \text{if } H \text{ is predictable or }  \Delta M \geq 0 \\[1em]
   \left(\frac{\EO[H_T]}{G^{-1}(G(u) - R)}\right)\wedge 1 + \PP[A_T > R\mid \F_0] & \text{if } \E[H_T] < \infty. 
\end{cases}
\end{aligned}
\end{equation}
\end{enumerate}
\end{theorem}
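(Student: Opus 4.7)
The plan is to translate the claim into a Lenglart-type domination of the auxiliary process $Z$ by $H$, where $Z_t := G^{-1}(G(X^*_t) - \beta A_t)$ in case \ref{item:convexSup} and $Z_t := G^{-1}(G(X_t) - A_t)$ in case \ref{item:convexNoSup}, and then to invoke the main lemma \cref{lemma:1} (a sharp Lenglart-type inequality obtained via the Snell envelope characterization). As a preliminary reduction I would localize by $\tau_n := \inf\{t : |M_t| \vee A_t \vee H_t > n\} \wedge T$ to make $M$ a true integrable martingale and to ensure integrability of all quantities, and handle $\eta(c_0)=0$ by replacing $X$ with $X \vee (c_0 + \varepsilon)$ and passing to the limit using the continuity and monotonicity of $G$ and $G^{-1}$.

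The core step combines the deterministic Bihari-LaSalle inequality (\cref{lemma:DetBihari}) with a stopping-time / Lenglart-domination argument. A pathwise application of deterministic Bihari to the running supremum only yields $G^{-1}(G(X^*_t) - A_t) \leq (M+H)^*_t$, which is too weak to control in $L^p$ by $H$ alone since $(M+H)^*$ involves $M^*$; instead, I would introduce the first-hitting times $\sigma_u := \inf\{t : Z_t > u\}$ and evaluate the defining assumption at $\sigma_u \wedge T$. Conditional expectations then kill the martingale contribution, and after taking a running supremum one obtains an inequality of Lenglart-domination type, from which \cref{lemma:1} extracts the sharp constants $\alpha_1 \alpha_2$ (when $H$ is predictable or $\Delta M \geq 0$) or $\alpha_1$ (in the integrable case). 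When $H$ is not predictable and $\Delta M \not\geq 0$, \cref{lemma:smoothenJumps} is used to smoothen the jumps of $M$ before applying \cref{lemma:1}. For case \ref{item:convexSup}, the factor $\beta = (1-p)^{-1}$ emerges from the $p$-th power transformation: the identities \eqref{eq:G_umrechnen}--\eqref{eq:G_umrechnen2} give $\tilde G_p^{-1}((1-p) G(x)) = x^p$, and the assumed convexity of $\eta_p$ combined with a Jensen-type step against $\eta_p$ in place of $\eta$ unfolds, after inverting, to a Bihari inequality with the factor $\beta$ in front of $A_T$.

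The tail estimates \eqref{eq:Mark1} and \eqref{eq:Mark1-r} follow from the Markov form of Lenglart's inequality applied to the same domination, with truncations at $w$ for $H$ and at $R$ for $A$ giving the three terms. The reformulation \eqref{eq:Mark1-r} in terms of $X_t$ rests on the elementary rearrangement: on $\{A_T \leq R\}$, $\sup_t X_t > u$ forces $\sup_t G^{-1}(G(X_t) - A_t) > G^{-1}(G(u) - R)$, so applying \eqref{eq:Mark1} to the latter finishes the argument. The main obstacle I anticipate is case \ref{item:convexSup}, where extracting the sharp factor $\beta$ requires a careful marriage of the $\eta_p$-convexity with the sharp Lenglart constants; this is precisely what the paper's two main lemmas \cref{lemma:1} and \cref{lemma:smoothenJumps} are designed to accomplish, and threading the Snell envelope characterization through the argument without losing sharpness will be the most delicate bookkeeping.
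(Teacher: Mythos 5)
Your high-level plan (reduce to a Lenglart-type domination of the transformed process and then invoke \cref{lemma:1}, using \cref{lemma:smoothenJumps} to reduce to a continuous integrator) is on the right track, but the proposal is missing the central step that actually produces the domination, and it misstates what \cref{lemma:smoothenJumps} does.

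The gap: you propose to show that $Z_t := G^{-1}(G(X_t) - A_t)$ is Lenglart-dominated by $H$ by stopping the defining inequality \eqref{eq:GronwallAssumptionNoSup} at $\sigma_u \wedge T$ and taking conditional expectations. That alone cannot work. Taking $\EO$ at a stopping time of \eqref{eq:GronwallAssumptionNoSup} removes the $M$ contribution but leaves the integral $\int_{(0,\tau]} \eta(X_{s^-})\dd A_s$ intact, and there is no elementary rearrangement that turns $\E[X_\tau] \leq \E[\int_{(0,\tau]}\eta(X_{s^-})\dd A_s + H_\tau]$ into $\E[G^{-1}(G(X_\tau)-A_\tau)] \leq \E[H_\tau]$. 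What is needed, and what the paper does, is an It\^o argument \emph{before} any stopping or expectation: set $Y$ equal to the right-hand side of \eqref{eq:GronwallAssumptionNoSup}, set $f(x,a) := G^{-1}(G(x)-a)$, and apply It\^o to $(f(Y_t,A_t))$. The partial derivatives satisfy $\partial_x f(x,a) = \eta(f(x,a))/\eta(x)$ and $\partial_a f(x,a) = -\eta(f(x,a))$, which makes the $\dd A$-contributions coming from $\dd Y$ and from $\dd A$ cancel exactly (using $X_{s^-}\leq Y_{s^-}$, so $\eta(X_{s^-})/\eta(Y_{s^-}) \leq 1$), while the convexity of $\eta$ makes the jump and second-order terms non-positive since $\partial_{xx}^2 f \leq 0$. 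The outcome is the pathwise inequality $f(Y_t,A_t) \leq H_t + \tilde M_t$ with $\tilde M$ a local martingale, i.e., $(f(Y,A),\, 0,\, H,\, \tilde M)$ satisfies \nameref{def:sup} with the linear $\eta$, and only then do \cref{lemma:1}/\cref{lemma:2} (or Lenglart directly when $H$ is predictable) deliver the sharp constants. Your stopping-time idea is already built into \cref{lemma:1}; you do not need to re-derive it, but you do need the It\^o step to make \cref{lemma:1} applicable to $Z$.

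For part \ref{item:convexSup} the gap is compounded. The factor $\beta$ does not come out of a single Jensen-type step against $\eta_p$; one must first prove that $((X^*_{t\wedge T})^p)_t$ satisfies \nameref{def:nosup} with $\eta_p$. That is accomplished by combining \cref{lemma:1} (to get a pointwise inequality for $\one_{\{X^*_t>u\}}u$), Fatou, and the integral representations of \cref{rmk:formulaZ}, which give a Lenglart-type domination $\EO[(X^*_{\tau\wedge T})^p] \leq \EO[\int_{(0,\tau\wedge T]}\eta_p((X^*_{s^-})^p)\dd A_s + \bar H_{\tau\wedge T}]$, and then applying the Snell-envelope characterization (\cref{cor:characterizationOfLenglartDomination}) to turn this back into a \emph{pathwise} inequality with a new local martingale. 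Only at that point can the It\^o argument be run again with $\tilde G_p$, and then \eqref{eq:G_umrechnen}--\eqref{eq:G_umrechnen2} convert $\tilde G_p^{-1}(\tilde G_p((X^*_T)^p)-A_T)$ into $G^{-1}(G(X^*_T)-\beta A_T)^p$. You mention the Snell envelope only in passing, without identifying that its role is precisely this domination-to-pathwise-inequality step, which is what allows It\^o to be applied a second time.

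Two smaller points: \cref{lemma:smoothenJumps} is a time change that removes the jumps of the predictable integrator $A$, not the jumps of $M$; the cases ``$H$ predictable'' and ``$\Delta M \geq 0$'' are handled separately inside \cref{lemma:1} itself. And the $\varepsilon$-regularization should be $X + \varepsilon$, $H+\varepsilon$ (as in \cref{rmk:HgeqEps}), which preserves \eqref{eq:GronwallAssumptionNoSup} via the monotonicity of $\eta$; truncating to $X \vee (c_0+\varepsilon)$ would not obviously preserve the defining inequality.
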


We prove \cref{thm:stochBihariConvex} b) by further developing the proof idea of Xie and Zhang \cite[Lemma 3.7]{XieZhang}. \cref{thm:stochBihariConvex} a) is more difficult to prove and requires new techniques. 

The constants $\alpha_1$, $\alpha_1\alpha_2$ and $\beta$ are sharp, for details see \cref{subsec:sharpness}. Under the stronger assumption $\Anosup$  \eqref{eq:Mark1} provides an upper bound for the weak $L^1$ norm, \cref{thm:sharp-tail} shows that under $\Asup$ the weak $L^1$ norm may be infinite.

\begin{remark}[On the relation between the convexity of $\eta_p$ and $\eta$] 
Assume that $c_0=0$ and $\eta_p(0)=0$. Then, convexity of $\eta_p$ implies that $\eta$ is convex: 
If $\eta_p$ is convex, then it is almost everywhere differentiable. For any $x>0$ in which $\eta_p$ is differentiable, we have for $y=x^{1/p}$
\begin{equation*}
\eta(y)  = \eta_p(y^p) y^{1-p} \text{ and } \eta'(y) = p  \eta_p'(y^p) + (1-p)\eta_p(y^p)y^{-p}.
\end{equation*}
Due to $\eta_p$ being convex and $\eta_p(0) = 0$, we have that $z\mapsto\frac{\eta_p(z)}{z}$ is non-decreasing. 
In particular, $\eta$ is  almost everywhere differentiable and $\eta'$ is non-decreasing (on its domain).
As convexity of $\eta_p$ implies, that $\eta_p$ (and hence also $\eta$) is locally Lipschitz continuous, we have that $\eta$ is absolutely continuous. Together, this implies that $\eta$ is convex. \\

However, convexity of $\eta$ does \textit{not} imply that $\eta_p$ is convex: For example $\eta(x) = x \arctan(x)$ is convex (due to $\eta''(x) = 2(x^2+1)^{-2} >0$) and $\eta_{1/2}(x) = x \arctan(x^2)$ is not convex (as $\eta_{1/2}''(x) = -2x(x^4-3)(x^4+1)^{-2}$).
\end{remark}

\begin{remark}
The function $\eta_p$ appears (upto the factor $1-p$) naturally in connection with the deterministic Bihari-LaSalle equalities: Let $x$ be as in \cref{lemma:DetBihari} and assume that $x$ is non-decreasing. Then, it can be shown, that $x^p$ (for some $p\in(0,1)$) satisfies:
\begin{equation}\label{eq:detBihari-p}
x(t)^p \leq (1-p)\int_0^t\eta_p(x(s^-)^p)\dd A(s) + H^p  \quad \text{for all } t\in[0,T].
\end{equation}

Moreover, for any continuous $\eta$ such that $x(t) = G^{-1}(G(H)+t)$ is well-defined for $t\in[0,T]$, 
we have that $x$ satisfies
\begin{equation*}
x(t) = \int_0^t \eta(x(s)) \dd s + H
\end{equation*}
and $x^p$ satisfies
\begin{equation*}
x^p(t) = (1-p)\int_0^t \eta_p( x^p(s)) \dd s + H^p.
\end{equation*}
\end{remark}

\begin{corollary}\label{cor:convex}
In the case \nameref{def:sup} and $\E[H_T]<\infty$ of the previous theorem we get the following estimates.
\begin{enumerate}
\item For $\eta(x)=x$  for all $x\geq 0$, we have
\begin{equation*}
 \| \e^{-\beta A_T}X^*_T \|_{p,\F_0} \leq \alpha_1 \|H_T\|_{1,\F_0}.
\end{equation*}
\item  If $X\geq 1$, $H \geq 1$ and $\eta(x) = x\log(x)$ for $x\geq 1$, we have 
\begin{equation*}
\|(X_T^*)^{\e^{-\beta A_T}}\|_{p,\F_0} \leq \alpha_1 \|H_T\|_{1,\F_0}.
\end{equation*}

\item  If $X\geq \e$, $H \geq \e$ and  $\eta(x)=x\log(x)\log(\log(x))$  for all $x\geq \e$, we have
\begin{equation*}
\| \e^{(\log(X^*_T))^{\e^{-\beta A_T}} }\|_{p,\F_0} \leq \alpha_1 \|H_T\|_{1,\F_0}.
\end{equation*}
\end{enumerate}

\end{corollary}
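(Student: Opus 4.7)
The plan is to apply the third branch (the $\E[H_T]<\infty$ case) of \cref{thm:stochBihariConvex}\ref{item:convexSup} to each of the three explicit choices of $\eta$. For every case the work reduces to two routine steps: first, verifying that the auxiliary function $\eta_p(x)=\frac{p}{1-p}\eta(x^{1/p})x^{1-1/p}$ from \eqref{eq:defEta_p} is convex and $C^1$ with $\eta_p(c_0^p)=0$, so that the hypotheses of the theorem are met; and second, computing $G(x)=\int_c^x \frac{\dd u}{\eta(u)}$ together with its inverse in order to identify the quantity $G^{-1}(G(X_T^*)-\beta A_T)$ that the theorem bounds. Once these are in hand, substituting $R=\beta A_T$ in the explicit formula for $G^{-1}(G(\cdot)-R)$ and invoking the theorem directly produces the stated bound.

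For part (a) I would take $c_0=0$ and $c=1$: then $\eta_p(x)=\frac{p}{1-p}x$ is trivially admissible, $G(x)=\log x$, and $G^{-1}(G(x)-R)=x\,\e^{-R}$. For part (b) I would take $c_0=1$ and $c=\e$: then $\eta_p(x)=\frac{1}{1-p}x\log x$ on $[1,\infty)$, which is $C^1$ with $\eta_p(1)=0$ and $\eta_p''(x)=\tfrac{1}{(1-p)x}>0$, hence convex; the substitution $v=\log u$ yields $G(x)=\log\log x$ and therefore $G^{-1}(G(x)-R)=x^{\e^{-R}}$. For part (c) I would take $c_0=\e$ and $c=\e^{\e}$: then $\eta_p(x)=\frac{1}{1-p}x\log(x)\log\bigl(\tfrac{\log x}{p}\bigr)$ for $x\ge\e^p$, with $\eta_p(\e^p)=0$; iterating the substitution twice gives $G(x)=\log\log\log x$ and therefore $G^{-1}(G(x)-R)=\exp\bigl((\log x)^{\e^{-R}}\bigr)$.

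The only nontrivial step I anticipate is the convexity verification for $\eta_p$ in case (c): the factor $\log((\log x)/p)$ is by itself concave, so convexity of the product with $x\log x$ is not visible by inspection. However, a direct computation of $\eta_p''$ should establish positivity throughout $[\e^p,\infty)$ by exploiting the strong convexity of $x\log x$ together with the fact that the concave factor stays non-negative and bounded on this range. Apart from this bookkeeping the remainder is purely a matter of matching the explicit formulas for $G^{-1}(G(x)-\beta A_T)$ to the left-hand sides of the three displayed estimates.
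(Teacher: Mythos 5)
Your proposal is correct and follows essentially the same route as the paper's proof, which simply records the computed $\eta_p$, $G$, and $G^{-1}$ for each of the three cases and appeals to the $\E[H_T]<\infty$ branch of \cref{thm:stochBihariConvex}\ref{item:convexSup}. You are in fact somewhat more careful than the paper, which omits the convexity and $C^1$ verification for $\eta_p$ entirely; for case (c) the deferred check does go through, since one finds $\eta_p''(x) = \tfrac{1}{(1-p)x}\bigl(1 + \tfrac{1}{\log x} + \log\bigl(\tfrac{\log x}{p}\bigr)\bigr) > 0$ for all $x\in[\e^p,\infty)$.
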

\begin{proof}[Proof of \cref{cor:convex}]
\begin{enumerate}
\item For $\eta(x)=x$ and $c=1$ we have $\eta_p(x)=\frac{p}{1-p} x$,
\begin{equation}
G(x) = \log(x) \text{ and  }G^{-1}(x) = \e^x.
\end{equation} 
\item For $\eta(x)=x\log(x)$ and $c=\e$ we have $\eta_p(x)=\frac{1}{1-p} x\log(x)$,
\begin{equation*}
G(x) = \log(\log(x)) \text{ and } G^{-1}(x) = \exp(\exp(x)).
\end{equation*}
\item For $\eta(x)=x\log(x)\log(\log(x))$ and $c=\e^\e$ we have  $\eta_p(x)=\frac{1}{1-p} x\log(x)\log(\log(x^{1/p}))$,
\begin{equation*}
G(x) = \log(\log(\log(x))) \text{ and }  G^{-1}(x) = \exp(\exp(\exp(x))).
\end{equation*}
\end{enumerate}
\end{proof}

\subsection{Sharpness of the constants and estimates}\label{subsec:sharpness}

In this section we study the sharpness of the constants $\beta$, $\alpha_1 \alpha_2$ and $\alpha_1$. These constants also appear in other generalizations of stochastic Gronwall inequalities, see \cite{GeissConcave}. Moreover, we prove that the assumption $\Asup$ does not imply a upper bound on the weak $L^1$ norm (in contrast to the stronger assumption $\Anosup$). \\

To study the sharpness of the stochastic Gronwall inequality with supremum we will use the following lemma. The idea to study a process $(X_t)_{t\geq 0}$ of the following type to prove sharpness of the constant $\beta$ is due to Michael Scheutzow.
\begin{lemma}\label{lemma:pathdependentSDE}
Let $\varepsilon, \delta \in (0,1)$ and let $l:[0,\delta \varepsilon) \to [0,\infty)$ be increasing, continuous, bijective and such that $\int_0^{\delta \varepsilon} l^2(u) \dd u = \infty$. We denote
\begin{equation*}
g_{\varepsilon, \delta}:[0,\infty) \to \{0,1\},  \qquad 
g_{\varepsilon, \delta}(s) \define \sum_{k=0}^\infty \one_{[(k+\varepsilon) \delta, (k+1)\delta]}(s).
\end{equation*}
Define for all $x\in C([0,\infty),\R)$
\begin{equation*}
\begin{aligned}
b(s,x) & \define g_{\varepsilon, \delta}(s) \sup_{0\leq u\leq s} x(u)  \qquad  & \forall s\geq 0,  \\
\sigma(s,x) & \define  
\one_{\{x(s)>0\}} \one_{(k\delta, k\delta + \varepsilon\delta)}(s) l
(s-k\delta)  \qquad  & \forall s\in(k\delta, (k+1)\delta], k\in \N.
\end{aligned}
\end{equation*}
Let $(W_t)_{t\geq 0}$ be a one-dimensional Wiener process on some underlying filtered probability space satisfying the usual conditions. Then there exists an adapted continuous non-negative process $(X_t)_{t\geq 0}$ enjoying the following properties:
\begin{enumerate}
\item For any (determistic) $t>0$ we have:
\begin{equation}\label{eq:lemma-path-dependent-SDE-1}
\int_0^t |b(s,X)| \dd s + \int_0^t |\sigma(s,X)|^2 \dd s < \infty \qquad \PP\text{-a.s.}
\end{equation}
\item  The process $(X_t)_{t\geq 0}$ is a solution of the path-dependent SDE
\begin{equation}\label{eq:path-dependent-SDE-sharpness}
X_t = 1 + \int_0^t b(s,X) \dd s + \int_0^t \sigma(s,X)\dd W_s \qquad \forall t\geq 0.
\end{equation}
In particular, $(X_t)_{t\geq 0}$ satisfies
\begin{equation*}
X_t \leq  \int_0^t X_s^* \dd s + M_t + 1 \qquad \forall t\geq 0
\end{equation*}
for $M_t \define \int_0^t \sigma(s,X)\dd W_s$. 
\item For all $p\in(0,1)$, $k\in \N_0$
\begin{equation*}
\E[(X_{k\delta + \varepsilon\delta}^*)^p] = \frac{1}{1-p} \left(1+\frac{p}{1-p}(1-\varepsilon) \delta\right)^k
\end{equation*}
holds true.
\end{enumerate}
\end{lemma}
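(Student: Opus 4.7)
My plan is to construct $(X_t)_{t\geq 0}$ inductively on the deterministic intervals $I_k \define [k\delta, (k+1)\delta]$, splitting each into a \emph{diffusive phase} $I_k^{d} \define [k\delta, k\delta+\varepsilon\delta]$ on which $b\equiv 0$, and a \emph{drift phase} $I_k^{r} \define [k\delta+\varepsilon\delta, (k+1)\delta]$ on which $\sigma\equiv 0$. Writing $Y_k \define X^*_{k\delta+\varepsilon\delta}$, the task reduces to establishing the recursion
\begin{equation*}
\E[Y_{k+1}^p \mid Y_k] = Y_k^p\left(1+\frac{p(1-\varepsilon)\delta}{1-p}\right)
\end{equation*}
together with the base case $\E[Y_0^p]=1/(1-p)$, from which the claimed closed form follows by iterated conditional expectations.

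On each diffusive phase, assuming the endpoint $X_{k\delta}\geq 0$ is given from the previous step, I would construct $X$ via a Dambis--Dubins--Schwarz time change. Setting $A_k(t) \define \int_{k\delta}^{t} l(s-k\delta)^2\,\dd s$ and $\tau_k \define \inf\{t\geq k\delta : X_t=0\}$, the process $t\mapsto X_{k\delta+A_k^{-1}(t)}$ (extended to be constantly $0$ after $\tau_k$) is a Brownian motion starting at $X_{k\delta}$ and stopped at $0$. Since $A_k(k\delta+\varepsilon\delta)=+\infty$ while the hitting time of $0$ by a Brownian motion starting at $X_{k\delta}$ is a.s.\ finite, this forces $\tau_k<k\delta+\varepsilon\delta$ $\PP$-a.s.\ and hence $X_{k\delta+\varepsilon\delta}=0$. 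Standard gambler's ruin for Brownian motion on $[0,u]$ started at $X_{k\delta}$ yields the Pareto tail
\begin{equation*}
\PP\!\left[\sup_{s\in I_k^d} X_s > u \,\Big|\, X_{k\delta}\right] = \frac{X_{k\delta}}{u}\wedge 1, \qquad u>0,
\end{equation*}
and the integrability claim \cref{eq:lemma-path-dependent-SDE-1} on $I_k^d$ follows because $\sigma^2(s,X)$ vanishes after the a.s.\ finite time $\tau_k<k\delta+\varepsilon\delta$ while $l$ is continuous, hence bounded, on $[0,\tau_k-k\delta]$.

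On each drift phase, I would solve the degenerate SDE explicitly. With $X_{k\delta+\varepsilon\delta}=0$ and $\dd X_t = X_t^*\,\dd t$, as long as $X_t \leq Y_k$ the running supremum $X_t^*$ remains equal to $Y_k$, so $X_t = Y_k\,(t-k\delta-\varepsilon\delta)$. Since $(1-\varepsilon)\delta<1$, this stays below $Y_k$ throughout $I_k^r$, giving $X_{(k+1)\delta}=(1-\varepsilon)\delta\,Y_k$ and $X^*_{(k+1)\delta}=Y_k$. Gluing the pieces across $k$ produces a continuous adapted non-negative process satisfying \cref{eq:path-dependent-SDE-sharpness}, and \cref{eq:lemma-path-dependent-SDE-1} on $I_k^r$ is immediate from $\sigma\equiv 0$ and boundedness of $b$.

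Finally, applying the Pareto tail on $I_{k+1}^d$ with starting value $X_{(k+1)\delta}=(1-\varepsilon)\delta\,Y_k$ and using $Y_{k+1}=\max\bigl(Y_k,\,\sup_{s\in I_{k+1}^d}X_s\bigr)$, I obtain $\PP[Y_{k+1}>u\mid Y_k]=1$ for $u<Y_k$ and $\PP[Y_{k+1}>u\mid Y_k]=(1-\varepsilon)\delta\,Y_k/u$ for $u\geq Y_k$. Splitting $\E[Y_{k+1}^p\mid Y_k]=\int_0^\infty pu^{p-1}\PP[Y_{k+1}>u\mid Y_k]\,\dd u$ at $u=Y_k$ yields the target recursion, and the base case is computed identically from $X_0=1$. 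I expect the main obstacle to be that $\sigma$ is discontinuous through the indicator $\one_{\{X>0\}}$ and $l$ blows up at the right endpoint of each diffusive phase, so one cannot invoke any standard Lipschitz-type SDE existence result; the Brownian time-change construction above is precisely designed to sidestep this while still producing a well-defined continuous adapted solution.
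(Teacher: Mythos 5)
Your construction and computation are correct and follow essentially the same route as the paper's proof: build $X$ block by block, using a time-changed (absorbed) Brownian motion on each diffusive phase and linear growth on each drift phase, then apply gambler's ruin to obtain a Pareto tail and integrate to get the multiplicative recursion for $\E[(X^*_{k\delta+\varepsilon\delta})^p]$. The only cosmetic difference is that you integrate the tail against $pu^{p-1}\,\dd u$ in the original variable, whereas the paper first passes to the $p$-th power and integrates against $\dd u$; the two changes of variable are equivalent and give the same recursion factor $1+\tfrac{p}{1-p}(1-\varepsilon)\delta$.
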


\begin{theorem}[Sharpness of the constant $\beta$]\label{thm:sharpnessBeta}
Let $p\in(0,1)$ and assume that $\tilde{\alpha}$, $\tilde{\beta}$  are positive constants (depending on $p$) such that for any non-negative adapted continuous process $(X_t)_{t\geq 0}$ which satisfies 
\begin{equation}\label{eq:beta-sharpness-assumption0}
X_t \leq \int_0^t X^*_s \dd s + M_t + H, \qquad \forall t\geq 0.
\end{equation}
for some continuous local martingale $(M_t)_{t\geq 0}$ starting in $0$ and some constant $H>0$, we have for all $t\geq 0$
\begin{equation}\label{eq:beta-sharpness-assumption}
\|X^*_t\|_p \leq \tilde{\alpha} H \exp(\tilde{\beta} t).
\end{equation}
Then, $\tilde{\beta} \geq \beta \define (1-p)^{-1}$ holds true.
\end{theorem}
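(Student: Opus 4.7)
The plan is to use \cref{lemma:pathdependentSDE} directly: the process $(X_t)_{t\geq 0}$ constructed there satisfies all hypotheses of the putative inequality \eqref{eq:beta-sharpness-assumption0} with $H=1$ and $M_t \define \int_0^t \sigma(s,X)\dd W_s$ a continuous local martingale, so the assumed bound \eqref{eq:beta-sharpness-assumption} must apply to it. Combining with the exact moment formula from part c) of the lemma gives
\begin{equation*}
\frac{1}{(1-p)^{1/p}}\left(1+\frac{p}{1-p}(1-\varepsilon)\delta\right)^{k/p} = \|X^*_{k\delta+\varepsilon\delta}\|_p \leq \tilde{\alpha}\exp(\tilde{\beta}(k\delta+\varepsilon\delta))
\end{equation*}
for every $k\in\N_0$ and every $\varepsilon,\delta\in(0,1)$ (for which the lemma applies).

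Next I take logarithms and extract asymptotics. Dividing by $k$ and letting $k\to\infty$ with $\varepsilon,\delta$ fixed, the constant prefactor $\tilde{\alpha}(1-p)^{1/p}$ and the $\varepsilon\delta$ term disappear and we obtain
\begin{equation*}
\frac{1}{p}\log\!\left(1+\frac{p}{1-p}(1-\varepsilon)\delta\right) \leq \tilde{\beta}\,\delta.
\end{equation*}
Now letting $\delta\to 0$ (for fixed $\varepsilon$) and using $\log(1+x)/x\to 1$, this rearranges to $\frac{1-\varepsilon}{1-p}\leq\tilde{\beta}$. Finally letting $\varepsilon\to 0$ yields $\tilde{\beta}\geq (1-p)^{-1}=\beta$, which is the claim.

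The only real subtlety, and the place where \cref{lemma:pathdependentSDE} does the heavy lifting, is producing an admissible witness process whose $p$-th moment of the running supremum genuinely grows like $(1+cp(1-p)^{-1}\delta)^{k/p}$ on a grid of mesh $\delta$, so that after taking $k\to\infty$ first and $\delta\to 0$ second the constant $(1-p)^{-1}$ is exposed exactly. Once this process is in hand, the argument above is just an asymptotic comparison and the order of limits ($k\to\infty$, then $\delta\to 0$, then $\varepsilon\to 0$) is forced: the $\varepsilon$ factor can only be removed last because the exact moment identity in \cref{lemma:pathdependentSDE} c) requires $\varepsilon>0$. Note also that $H=1$ suffices for the sharpness argument since both sides of \eqref{eq:beta-sharpness-assumption} scale by the same power of $H$, so the multiplicative prefactor $\tilde\alpha$ plays no role in the final asymptotic comparison.
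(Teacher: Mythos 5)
Your proof is correct and follows the same route as the paper: apply Lemma~\ref{lemma:pathdependentSDE} to get a witness process with $H=1$, use its exact moment identity, take logarithms, divide by $k$, and let $k\to\infty$ then send the mesh parameters to $0$. One tiny inaccuracy in your commentary: the order in which $\varepsilon\to 0$ and $\delta\to 0$ are taken after $k\to\infty$ is not actually forced — the paper takes $\varepsilon\to 0$ before $\delta\to 0$, you take them in the opposite order, and both work since the two limits commute here; the only constraint is that $k\to\infty$ must come first while $\varepsilon,\delta>0$.
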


\begin{corollary}
The constant $\beta$ in \cref{thm:stochBihariConvex} \ref{item:convexSup} is sharp. It is already sharp when $\eta(x) \equiv x$, $A_t \equiv t$ , $H$ is a constant and $M$ and $X$ are continuous processes. 
\end{corollary}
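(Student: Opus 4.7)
The plan is to reduce the corollary directly to \cref{thm:sharpnessBeta} by specialising \cref{thm:stochBihariConvex}\ref{item:convexSup} to the Gronwall case $\eta(x)\equiv x$. I would take $\eta(x)=x$ on $[0,\infty)$, which is convex, $C^1$, and satisfies $\eta(c_0)=0$ for $c_0=0$; correspondingly $\eta_p(x)=\frac{p}{1-p}x$ is convex, $C^1$, and vanishes at $0$, so the hypotheses of \cref{thm:stochBihariConvex}\ref{item:convexSup} are fulfilled. Choosing $c=1$ in \eqref{eq:defG} yields $G(x)=\log x$ and $G^{-1}(y)=\e^{y}$, hence
\[
G^{-1}\bigl(G(X_T^*) - \beta A_T\bigr) \;=\; X_T^*\, \e^{-\beta A_T}.
\]

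Next I would specialise further to $A_t\equiv t$, $H_t\equiv H>0$ a constant (in particular predictable), and continuous $X$ and $M$, working (without loss of generality) with $\F_0$ trivial so that $\|\cdot\|_{p,\F_0}=\|\cdot\|_p$. Under these choices the estimate of \cref{thm:stochBihariConvex}\ref{item:convexSup} becomes
\[
\e^{-\beta T}\|X_T^*\|_p \;=\; \bigl\|X_T^*\,\e^{-\beta T}\bigr\|_p \;\le\; \alpha_1\alpha_2\, H,
\]
i.e.\ $\|X_T^*\|_p \le \alpha_1\alpha_2\, H\, \e^{\beta T}$, which is precisely an estimate of the form \eqref{eq:beta-sharpness-assumption} with $\tilde\alpha=\alpha_1\alpha_2$ and $\tilde\beta=\beta$.

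The final step is a contradiction argument: if the exponent $\beta$ in \cref{thm:stochBihariConvex}\ref{item:convexSup} could be replaced by some $\tilde\beta<\beta$ (possibly at the price of an enlarged prefactor $\tilde\alpha$), then repeating the specialisation above would yield $\|X_T^*\|_p\le\tilde\alpha H \e^{\tilde\beta T}$ for every continuous non-negative adapted $(X_t)_{t\ge0}$ and every continuous local martingale $M$ satisfying \eqref{eq:beta-sharpness-assumption0}. \cref{thm:sharpnessBeta} rules this out, since it forces $\tilde\beta\ge\beta$ for any such pair $(\tilde\alpha,\tilde\beta)$. The whole difficulty therefore lies in \cref{thm:sharpnessBeta} itself, whose proof relies on the path-dependent SDE construction of \cref{lemma:pathdependentSDE}; the present corollary is an essentially immediate translation via the identification $G^{-1}(G(x)-\beta T)=x\e^{-\beta T}$, and no further technical obstacle arises.
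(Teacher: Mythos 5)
Your proof is correct and follows exactly the route the paper intends: with $\eta(x)\equiv x$ and $c=1$ one gets $G^{-1}(G(x)-\beta T)=x\,\e^{-\beta T}$, which turns the estimate of \cref{thm:stochBihariConvex}\ref{item:convexSup} into \eqref{eq:beta-sharpness-assumption}, so that \cref{thm:sharpnessBeta} immediately rules out any $\tilde\beta<\beta$. The paper leaves this step implicit precisely because, as you note, all the work is in \cref{thm:sharpnessBeta} and \cref{lemma:pathdependentSDE}.
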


\begin{theorem}[No $\Anosup$, no tail estimate of order $\mathcal {O}(1/u)$]\label{thm:sharp-tail} 
For any $T>0$ let $\varepsilon, \delta \in (0,1)$ and $k\in\N$ be chosen such that $T = k\delta + \varepsilon\delta$. Let $(X_t)_{t\geq 0}$ and $(M_t)_{t\geq 0}$  denote the process from \cref{lemma:pathdependentSDE}. Then $(X_t)_{t\geq 0}$ is a non-negative adapted continuous process and $(M_t)_{t\geq 0}$ a continuous local martingale starting in $0$ which satisfy 
\begin{equation}\label{eq:beta-sharpness-assumption0-1}
X_t \leq \int_0^t X^*_s \dd s + M_t + 1, \qquad \forall t\geq 0
\end{equation}
and
\begin{equation}\label{eq:beta-sharpness-assumption-1}
\sup_{u\geq 0}\left(u \PP\bigg[\sup_{t\in[0,T]} X_t\bigg] \right) = \infty.
\end{equation}
In particular, estimates of the form \eqref{eq:Mark1} do not hold in the case of \nameref{def:sup}.
\end{theorem}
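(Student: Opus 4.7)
The construction of the process is already done: the first two displayed conclusions of the theorem (adaptedness, continuity, non-negativity of $X$, the SDE form giving a continuous local martingale $M_t = \int_0^t \sigma(s,X)\,\dd W_s$, and the pointwise majorization by $\int_0^t X^*_s\,\dd s + M_t + 1$) are immediate from parts 1 and 2 of \cref{lemma:pathdependentSDE}; indeed $b(s,X)\le X^*_s$ pointwise and the initial value is $1$. So the real content is \eqref{eq:beta-sharpness-assumption-1}, which I propose to prove by contradiction. Choose $\delta,\varepsilon\in(0,1)$ and $k\in\N$ with $T=(k+\varepsilon)\delta$ and, without loss of generality, $k\ge 1$ (this is possible whenever $T>0$ by first picking $\delta<T$).

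Suppose toward a contradiction that $C \define \sup_{u\ge 0} u\,\PP[X^*_T > u]$ is finite. The key reduction is a standard layer-cake computation: for every $p\in(0,1)$, splitting at $u=C$ and using both $\PP[X^*_T > u]\le 1$ and $\PP[X^*_T>u]\le C/u$, one obtains
\begin{equation*}
\E[(X^*_T)^p] \;=\; p\int_0^\infty u^{p-1}\PP[X^*_T>u]\,\dd u \;\le\; C^p + pC\int_C^\infty u^{p-2}\,\dd u \;=\; \frac{C^p}{1-p}.
\end{equation*}

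The contradiction then comes from playing this bound against the explicit formula in part 3 of \cref{lemma:pathdependentSDE}, which gives
\begin{equation*}
\E[(X^*_T)^p] \;=\; \frac{1}{1-p}\left(1+\frac{p(1-\varepsilon)\delta}{1-p}\right)^{\!k}.
\end{equation*}
Combining the two yields $\bigl(1+\tfrac{p(1-\varepsilon)\delta}{1-p}\bigr)^{k} \le C^{p}$ for every $p\in(0,1)$. With $\delta,\varepsilon,k,C$ fixed and $k\ge 1$, the left-hand side tends to $+\infty$ as $p\nearrow 1$ (since $(1-\varepsilon)\delta>0$), while the right-hand side converges to $C<\infty$. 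This contradiction proves \eqref{eq:beta-sharpness-assumption-1}. The main (and essentially only) idea is therefore this use of the $p\nearrow 1$ limit: the weak $L^1$ norm controls the $L^p$ norm only with the factor $(1-p)^{-1/p}$, and the construction is engineered so that $\|X^*_T\|_p^p$ grows faster than that as $p\nearrow 1$.

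Finally, to justify the concluding sentence of the theorem, I would specialize \eqref{eq:Mark1} to $\eta(x)\equiv x$ (so $G(x)=\log x$, $G^{-1}(G(u)-R)=u\e^{-R}$), the deterministic predictable process $H_T\equiv 1$, and $A_t=t$ so that $A_T=T$ is deterministic. Taking any $w>1$ and $R>T$ in \eqref{eq:Mark1-r} kills the two probability terms and leaves $\PP[X^*_T>u]\le \e^R/u$ for all $u>0$, i.e.\ $\sup_u u\,\PP[X^*_T>u]\le \e^R<\infty$, directly contradicting \eqref{eq:beta-sharpness-assumption-1}. Hence \eqref{eq:Mark1} cannot hold under $\Asup$, as claimed.
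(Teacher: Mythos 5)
Your proof is correct and takes essentially the same route as the paper: both reduce to the explicit moment formula from \cref{lemma:pathdependentSDE} c), bound $\E[(X^*_T)^p]$ via the layer-cake formula under the hypothetical weak-$L^1$ bound (obtaining an upper bound of order $(1-p)^{-1}$), and derive a contradiction from the fact that the explicit formula grows like $(1-p)^{-1}(1+\tfrac{p}{1-p}(1-\varepsilon)\delta)^k$, which outpaces $(1-p)^{-1}$ as $p\nearrow 1$. The paper splits the layer-cake integral at $u=1$ rather than at $u=C$ and phrases the punchline as $\sup_{p\in(0,1)}\bigl((1-p)\E[(X^*_T)^p]\bigr)=\infty$ versus $\le 1+C$, but these are cosmetic differences; your added remark on deriving the ``in particular'' clause by specializing \eqref{eq:Mark1-r} is a correct and slightly more explicit treatment than the paper gives.
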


\noindent The next theorem studies the sharpness of the constants $\alpha_1$ and $\alpha_1 \alpha_2$ which appear in \cref{thm:stochBihariConvex}. The constant $\alpha_1 \alpha_2$ is the sharp constant of Lenglart's domination inequality (see \cref{lemma:Lenglart}). In particular, \cref{thm:sharpnessAlpha} a) and b) are closely connected to \cite[Theorem 2.1]{GeissScheutzow}. The upper bound given in \cref{thm:sharpnessAlpha} a) is by Fatou's Lemma a special case of Lenglart's domination inequality. Assertion c) of \cref{thm:sharpnessAlpha} is known in the literature, see for example \cite[Theorem 7.6, p. 300]{Osekowski}.

\begin{theorem}[Sharpness of the constants $\alpha_1$ and $\alpha_1\alpha_2$]
\label{thm:sharpnessAlpha}
Assume \nameref{def:sup} and $A\equiv 0$, i.e. let $(X_t)_{t\geq 0}$ be an adapted non-negative right-continuous process, $(H_t)_{t\geq 0}$ be an adapted non-negative non-decreasing c\`adl\`ag process, $(M_t)_{t\geq 0}$ be a c\`adl\`ag local martingale with $M_0=0$. Assume that for all $t\geq 0$
\begin{equation*}
X\ti \leq M\ti + H\ti \qquad \PP\text{-a.s}.
\end{equation*}
Then the following assertions hold for $p\in(0,1)$.
\begin{enumerate}
\item \label{item:sharpnessAlpha1} If $H$ is predictable and $\E[H_T^p]<\infty$, then $\|X\ti^*\|\pF \leq \alpha_1 \alpha_2 \|H\ti\|\pF$ for all $t\in[0,T]$ and the constant $\alpha_1 \alpha_2= (1-p)^{-1/p}p^{-1}$ is sharp. The constant is already sharp if $X$ and $H$ are continuous and $M$ has no negative jumps.
\item \label{item:sharpnessAlpha2} If $M$ has no negative jumps and $\E[H_T^p]<\infty$, then $\|X\ti^*\|\pF \leq \alpha_1 \alpha_2 \|H\ti\|\pF$ for all $t\in[0,T]$ and the constant $\alpha_1 \alpha_2$ is sharp. The constant is already sharp if $X$ and $H$ are continuous.
\item\label{item:sharpnessAlpha3} If $\E[H_T] < \infty$, then $\|X\ti\|\pF \leq \alpha_1 \|H\ti\|_{1,\F_0}$  for all $t\in[0,T]$ and the constant $\alpha_1=(1-p)^{-1/p}$ is sharp. The constant is already sharp if $X$ and $M$ are continuous and $H$ is a constant.
\end{enumerate}
\end{theorem}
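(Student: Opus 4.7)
The three upper bounds all follow from Lenglart's domination inequality applied to the pair $(X,H)$. First I would verify that for every bounded stopping time $\tau$ one has the conditional domination $\E_{\F_0}[X_\tau]\leq\E_{\F_0}[H_\tau]$: taking a reducing sequence $(\sigma_n)$ for $M$, optional sampling at $\tau\wedge\sigma_n$ gives $\E[M_{\tau\wedge\sigma_n}\mid\F_0]=0$, and since the negative part $M_{\cdot\wedge\sigma_n}^{-}\leq H_\cdot$ is dominated by the integrable $H_T$, a Fatou/DCT sandwich yields $\E[M_\tau\mid\F_0]\leq 0$, whence the asserted bound. Once this Lenglart-type domination is in force, the predictability of $H$ in (a), and the absence of negative jumps of $M$ in (b), is precisely what allows one to apply the conditional version of Lenglart's inequality (\cref{lemma:Lenglart}) with the sharp constant $\alpha_1\alpha_2$; in (c) the crude bound $\E_{\F_0}[H_\tau]\leq\E_{\F_0}[H_T]$ combined with the tail estimate $\PP[X^*_t>a\mid\F_0]\leq\E_{\F_0}[H_t]/a$ (the $L^1$-form of Lenglart) and the layer-cake formula yields the factor $\alpha_1$.

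For the sharpness in (c) the plan is to use the classical stopped-barrier example. Let $B$ be a one-dimensional Brownian motion on a filtered space satisfying the usual conditions, fix a constant $H>0$, and set $\tau=\inf\{t\geq 0:B_t=-H\}$, $M_t=B_{t\wedge\tau}$ and $X_t=M_t+H$. Then $M$ is a continuous local martingale with $M_0=0$, $X$ is continuous and non-negative, and $X_t\leq M_t+H$ trivially. The gambler's-ruin identity $\PP[\sup_{s\leq\tau}B_s\geq a]=H/(a+H)$, inserted into the layer-cake formula, gives $\E[(X^*_\infty)^p]=H^p+pH\int_H^\infty a^{p-2}\,\dd a=H^p/(1-p)=(\alpha_1 H)^p$. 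Monotone convergence in $t$ then yields $\|X^*_t\|_p\uparrow \alpha_1 H=\alpha_1\|H_t\|_1$ as $t\to\infty$, certifying that the constant $\alpha_1$ in (c) cannot be improved, even with $X,M$ continuous and $H$ a constant.

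The sharpness in (a) and (b) is more delicate and is essentially the content of \cite[Theorem~2.1]{GeissScheutzow}. The task is to exhibit, for each $\varepsilon>0$, a triple $(X,H,M)$ with $X$ continuous non-negative, $H$ continuous non-decreasing (hence predictable), $M$ a continuous local martingale, $X\leq M+H$, and $\|X^*_T\|_p\geq(\alpha_1\alpha_2-\varepsilon)\|H_T\|_p$; since a continuous $M$ has no jumps at all, a single construction settles both (a) and (b). A natural candidate is the Skorokhod reflection: set $M_t=B_{t\wedge\tau_a}$ with $\tau_a=\inf\{t:B_t=-a\}$, $H_t=\sup_{s\leq t}(-M_s)^+$ and $X_t=M_t+H_t$. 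The joint law of $(X^*_{\tau_a},H_{\tau_a})$ is then accessible via the reflection principle together with L\'evy's identity in law $(S-B,S)\stackrel{d}{=}(|B|,\ell)$ (with $S$ the running maximum of $B$ and $\ell$ its local time at zero), after which one optimizes the ratio $\|X^*_{\tau_a}\|_p/\|H_{\tau_a}\|_p$ over $a>0$ or, if necessary, over a parametric mixture of such processes. The main obstacle is this distributional computation: one must verify that the supremum over admissible parameters of the resulting ratio equals exactly $\alpha_1\alpha_2=(1-p)^{-1/p}p^{-1}$ and not some strictly smaller constant.
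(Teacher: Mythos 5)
Your handling of the upper bounds (via conditional domination and Lenglart, i.e.\ \cref{lemma:2}) and your sharpness argument for part~\ref{item:sharpnessAlpha3} (stopped Brownian motion with a constant barrier, giving $\E[(X^*_\infty)^p]=H^p/(1-p)$) coincide with the paper's proof.

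For the sharpness of parts~\ref{item:sharpnessAlpha1} and~\ref{item:sharpnessAlpha2}, however, there is a genuine gap. You propose the Skorokhod-reflection triple $M_t=B_{t\wedge\tau_a}$, $H_t=\sup_{s\leq t}(-M_s)^+$, $X_t=M_t+H_t$ as a candidate and explicitly defer the decisive step: computing the law of $(X^*_{\tau_a},H_{\tau_a})$ and checking that the ratio of $p$-th moments attains $\alpha_1\alpha_2$. This is not a technicality one can wave at — it is the entire content of the sharpness claim — and moreover your candidate is very likely the wrong one. Note that in your construction $M$ is \emph{continuous}, while the theorem only asserts sharpness for ``$X$ and $H$ continuous and $M$ with no negative jumps,'' not for $M$ continuous. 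That phrasing is deliberate: the paper's extremal example (built from \cite[Theorem 2.1]{GeissScheutzow}) takes $Z$ exponentially distributed, $\tilde X_t=\e^{Z/p}\one_{[Z,\infty)}(t)$, and $\tilde H_t=\int_0^{t\wedge Z}\e^{s/p}\dd s$ (the predictable compensator of $\tilde X$); the associated martingale $\tilde X-\tilde H$ has a genuine \emph{upward jump} of size $\e^{Z/p}$ at time $Z$. The paper then linearly interpolates $\tilde X$ over a unit window and glues in an independent Brownian segment stopped at a hitting time to keep $X^{(n)}$ non-negative and continuous, but the resulting local martingale $M^{(n)}=\tilde X_{\cdot\wedge n}-\tilde H_{\cdot\wedge n}+B_{\cdot\wedge\tau^{(n)}}-B_{\cdot\wedge(n+1)}$ still carries the positive jump of $\tilde X$. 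This jump is essential to saturating Lenglart's constant: it concentrates all of $\E[X_\tau]$ onto a thin event in a way a continuous martingale cannot. A path-continuous example such as reflected Brownian motion is therefore not expected to reach $\alpha_1\alpha_2$, and at minimum the required distributional computation has not been carried out. To close the proof you should instead import and adapt the exponential-time construction of \cite[Theorem 2.1]{GeissScheutzow} and exhibit the dominating local martingale with non-negative jumps, as the paper does.
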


\begin{corollary}
The constants $\alpha_1 \alpha_2$ and $\alpha_1$ in \cref{thm:stochBihariConvex} are sharp and they are already sharp when $A\equiv 0$.
\end{corollary}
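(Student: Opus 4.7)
The plan is to derive this corollary as an immediate consequence of \cref{thm:sharpnessAlpha} by specializing \cref{thm:stochBihariConvex} to the trivial case $A\equiv 0$. The first key observation is that when $A_T = 0$, the functional expressions $G^{-1}(G(X^*_T) - \beta A_T)$ (appearing in \cref{thm:stochBihariConvex} \ref{item:convexSup}) and $G^{-1}(G(X^*_T) - A_T)$ (appearing in \ref{item:convexNoSup}) both collapse to $G^{-1}(G(X^*_T)) = X^*_T$, since $G$ is a strictly increasing bijection onto its range. Hence, setting $A\equiv 0$, the bounds of \cref{thm:stochBihariConvex} read
\[
\|X^*_T\|_{p,\F_0} \leq \alpha_1\alpha_2 \|H_T\|_{p,\F_0} \qquad \text{or} \qquad \|X^*_T\|_{p,\F_0} \leq \alpha_1 \|H_T\|_{1,\F_0},
\]
depending on which of the three structural hypotheses on $H$ and $\Delta M$ is imposed; these are exactly the inequalities studied in \cref{thm:sharpnessAlpha}.

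The second step is to verify that the counterexamples produced in \cref{thm:sharpnessAlpha} are admissible instances of the framework of \cref{thm:stochBihariConvex}. I would take $\eta(x) = x$ with $c_0 = 0$; then both $\eta$ and $\eta_p(x) = \tfrac{p}{1-p}x$ are linear, hence $C^1$ and convex, and vanish at $0$ (resp.\ at $c_0^p = 0$), so the regularity hypotheses on $\eta$ and $\eta_p$ in both items of \cref{thm:stochBihariConvex} are satisfied. The sharpness witnesses in \cref{thm:sharpnessAlpha} consist of continuous $X, H, M$ with $X_t \leq M_t + H_t$, which trivially realize both \nameref{def:sup} and \nameref{def:nosup} once $A\equiv 0$.

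The final step is a contrapositive argument: if either $\alpha_1\alpha_2$ or $\alpha_1$ in \cref{thm:stochBihariConvex} could be replaced by a strictly smaller constant, then by the reduction above and the admissibility of the witnesses (with predictable $H$ or $\Delta M \geq 0$ as appropriate), one would obtain a strict improvement of the corresponding estimate in \cref{thm:sharpnessAlpha} \ref{item:sharpnessAlpha1}--\ref{item:sharpnessAlpha3}, contradicting the sharpness already established there. Since the whole argument is a one-line reduction coupled with an application of a previous theorem, I anticipate no genuine obstacle; the only point worth a moment's scrutiny is the compatibility of the convexity and smoothness requirements on $\eta$ and $\eta_p$ with the simple choice $\eta(x) = x$, which is evident.
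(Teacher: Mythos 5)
Your proof is correct and is essentially the argument the paper intends: the corollary is stated immediately after \cref{thm:sharpnessAlpha} precisely because the reduction $A\equiv 0 \Rightarrow G^{-1}(G(X_T^*))=X_T^*$ with $\eta(x)=x$, $\eta_p(x)=\tfrac{p}{1-p}x$ turns the estimates of \cref{thm:stochBihariConvex} into exactly those of \cref{thm:sharpnessAlpha}, whose sharpness witnesses are admissible instances of $\Asup$/$\Anosup$. Your verification that $\eta(x)=x$, $c_0=0$ satisfies the regularity hypotheses on both $\eta$ and $\eta_p$ is the only substantive check, and you carried it out correctly.
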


\section{Proofs of the results of Section 3}\label{sec:proofs}

\subsection{Lenglart's concept of domination and the Snell envelope}
We prove the stochastic Bihari-LaSalle inequality \cref{thm:stochBihariConvex} \ref{item:convexSup} by extending the proof technique of Lenglart's domination inequality, which we recall here for the convenience of the reader.

The following concept of domination was introduced by Lenglart in \cite[D\'efinition II]{Lenglart} and slightly generalized by Lenglart, L\'epingle, and Pratelli \cite[Lemma 1.4]{LLP}, see also Ren and Shen \cite{RenShen} and Mehri and Scheutzow \cite{MehriScheutzow}.

\begin{defi}[Lenglart's concept of domination]\label{def:domination}
Let 
\begin{itemize}
\item $(X_t)_{t\geq 0}$ be an adapted right-continuous non-negative process,
\item and $(H_t)_{t\geq 0}$ be a predictable \cadlagg non-negative non-decreasing  process,
\end{itemize}
such that for all bounded stopping times $\tau$
\begin{equation}\label{eq:def-lenglart}
\E[X_\tau \mid \F_0] \leq \E[H_\tau \mid \F_0] \leq \infty
\end{equation}
holds. Then we call $X$ dominated by $H$.
\end{defi}

By Fatou's lemma \eqref{eq:def-lenglart} also holds for all finite stopping times $\tau$.

The following lemma is \cite[Lemma 2.2]{MehriScheutzow}, which is a sharpened generalisation of  \cite[Th\'eor\`eme I, Corollaire II]{Lenglart}. See also the references listed in \cite{MehriScheutzow}.

\begin{lemma}[Lenglart's domination inequality] \label{lemma:Lenglart}
Let $X$ be dominated by $H$. Then, we have
\begin{enumerate}
\item for all $u>0$, $\lambda >0$ and $T>0$:
\begin{equation}\label{eq:LenglartProof1}
\PP[X_T^*>u \mid \F_0] \, u \leq \lambda \, \E[(H_T \lambda^{-1}) \wedge u\mid \F_0] + \PP[H_T\lambda^{-1} \geq u\mid \F_0] \, u.
\end{equation}
\item for all $p\in(0,1)$ and $T>0$:
\begin{equation*}
\E\bigg[\bigg(\sup_{t\in[0,T]} X_t\bigg)^p \,\bigg| \, \F_0 \bigg]^{1/p}
\leq \alpha_1 \alpha_2 \E\bigg[\bigg(\sup_{t\in[0,T]} H_t\bigg)^p \, \bigg|\, \F_0 \bigg]^{1/p},
\end{equation*}
where $\alpha_1\alpha_2 =(1-p)^{-1/p} p^{-1}$.
\end{enumerate}
\end{lemma}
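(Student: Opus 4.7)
The plan is to prove (a) by a stopping-time argument that exploits the predictability of $H$, and then to deduce (b) from (a) via the layer-cake formula combined with an optimization in the free parameter $\lambda$.

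For (a), fix $u > 0$ and set $w := \lambda u$. Introduce the stopping times
$$\sigma := \inf\{t \geq 0 : X_t > u\}, \qquad \tau := \inf\{t \geq 0 : H_t \geq w\};$$
since $H$ is a predictable non-decreasing c\`adl\`ag process, $\tau$ is a predictable stopping time and therefore admits an announcing sequence $\tau_n \uparrow \tau$ with $\tau_n < \tau$ on $\{\tau > 0\}$, which by the very definition of $\tau$ forces $H_{\tau_n} < w$ there. Apply the domination assumption \eqref{eq:def-lenglart} at the bounded stopping time $\rho_n := T \wedge \sigma \wedge \tau_n$, obtaining $\E[X_{\rho_n} \mid \F_0] \leq \E[H_{\rho_n} \mid \F_0]$. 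Right-continuity of $X$ together with the definition of $\sigma$ yields $X_\sigma \geq u$ on $\{\sigma < \infty\}$, hence $X_{\rho_n} \geq u\,\one_{\{\sigma \leq T \wedge \tau_n\}}$; monotonicity of $H$ combined with $H_{\tau_n} < w$ yields $H_{\rho_n} \leq H_T \wedge w$. Therefore
$$u\,\PP[\sigma \leq T \wedge \tau_n \mid \F_0] \leq \E[H_T \wedge w \mid \F_0].$$
Letting $n \to \infty$ and observing that
$$\{X_T^* > u\} \setminus \bigcup_n \{\sigma \leq T \wedge \tau_n\} \subseteq \{\sigma \geq \tau,\, \sigma \leq T\} \subseteq \{\tau \leq T\} = \{H_T \geq w\},$$
conditional monotone convergence delivers
$$u\,\PP[X_T^* > u \mid \F_0] \leq \E[H_T \wedge w \mid \F_0] + u\,\PP[H_T \geq w \mid \F_0],$$
which is (a) after rewriting $w = \lambda u$.

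For (b), the conditional layer-cake formula gives $\E[(X_T^*)^p \mid \F_0] = p \int_0^\infty u^{p-1} \PP[X_T^* > u \mid \F_0] \dd u$. Inserting the bound from (a) and switching the order of integration via Fubini, the two resulting integrals compute to
\begin{align*}
p \int_0^\infty u^{p-2} \E[H_T \wedge \lambda u \mid \F_0] \dd u & = \tfrac{\lambda^{1-p}}{1-p} \E[H_T^p \mid \F_0], \\
p \int_0^\infty u^{p-1} \PP[H_T \geq \lambda u \mid \F_0] \dd u & = \lambda^{-p} \E[H_T^p \mid \F_0],
\end{align*}
so that $\E[(X_T^*)^p \mid \F_0] \leq \bigl(\tfrac{\lambda^{1-p}}{1-p} + \lambda^{-p}\bigr) \E[H_T^p \mid \F_0]$. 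Differentiating the prefactor in $\lambda$ locates the minimum at $\lambda = p$ with value $\tfrac{p^{-p}}{1-p} = (\alpha_1 \alpha_2)^p$. Taking $p$-th roots and recalling that $H$ is non-decreasing, so $\sup_{t \in [0,T]} H_t = H_T$, completes the proof.

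The main obstacle is the stopping-time construction in (a): the argument hinges on producing a sequence of stopping times strictly preceding $\tau$ at which $H$ is still strictly below $w$. Predictability of $H$ is exactly what supplies this via the announcing sequence; without it one could not cleanly separate the ``good'' event $\{\sigma \leq T \wedge \tau_n\}$, on which the sharp bound $\E[H_T \wedge w \mid \F_0]$ is delivered, from the complementary ``bad'' event $\{\sigma \geq \tau,\, \sigma \leq T\} \subseteq \{H_T \geq w\}$, which is absorbed into the remainder term $u\,\PP[H_T \geq w \mid \F_0]$. Past this step everything reduces to a routine Fubini and a one-dimensional calculus minimization.
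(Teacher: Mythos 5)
Your argument is correct in substance and, to the extent the paper describes a proof, follows the same path: for part (b) the reduction from (a) via the layer-cake formula with the optimal choice $\lambda = p$ is exactly what the paper indicates after \cref{rmk:formulaZ}, and for part (a) your stopping-time construction with an announcing sequence is the same technique the paper employs in Step (a) of the proof of the more general \cref{lemma:1}.

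One small point you should tighten. You observe that the announcing sequence gives $H_{\tau_n} < w$ only on $\{\tau > 0\}$, but you then assert the global bound $H_{\rho_n} \leq H_T \wedge w$, from which you deduce $u\,\PP[\sigma \leq T \wedge \tau_n \mid \F_0] \leq \E[H_T \wedge w \mid \F_0]$ as an inequality between $\F_0$-measurable random variables. On the $\F_0$-measurable event $\{H_0 \geq w\} = \{\tau = 0\}$ the bound $H_{\rho_n} \leq w$ fails (there $\rho_n = 0$ and $H_{\rho_n} = H_0 \geq w$), so this step does not hold pointwise. The repair is trivial and is exactly what the paper does in \cref{lemma:1}: on $\{H_0 \geq w\}$ one has $\PP[H_T \geq w \mid \F_0] = 1$, so the target inequality \eqref{eq:LenglartProof1} is automatic, and one may therefore restrict the announcing-sequence argument to $\{H_0 < w\}$. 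With that case distinction made explicit your proof of (a) is complete, and the Fubini computation and calculus minimization for (b) are correct as written (the optimum $\lambda = p$ gives the prefactor $\tfrac{p^{-p}}{1-p} = (\alpha_1\alpha_2)^p$).
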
 

Note that \cref{lemma:Lenglart} b) follows from a) by integrating equation \eqref{eq:LenglartProof1} w.r.t. $pu^{p-1} \dd u$, e.g. using the formulas of \cref{rmk:formulaZ} and choosing $\lambda = p$.

\begin{remark}[Calculation of $Z^p$, $p\in(0,1)$]\label{rmk:formulaZ}
Let $Z$ be a non-negative random variable and $p\in(0,1)$.
Then $Z^p$ can be calculated using the three formulas below.
\begin{equation}\label{eq:formulaZ}
\begin{aligned}
Z^p &= p \int_0^\infty \one_{\{Z \geq u\}} u^{p-1} \dd u \\ 
Z^p & = (1-p) \int_0^\infty Z \one_{\{Z\leq u\}} u^{p-2} \dd u \\
Z^p &= p (1-p) \int_0^\infty (Z\wedge u)u^{p-2}\dd u
\end{aligned}
\end{equation}
The third equality follows e.g. by using the first and second equality.
In particular, we also have  $Z^{p-1}  = (1-p) \int_0^\infty \one_{\{Z\leq u\}} u^{p-2} \dd u$ for $Z>0$. The third equality exists more generally also for concave functions, see Burkholder \cite[Theorem 20.1, p.38-39]{Burkholder} and Pratelli \cite[Proposition 1.2]{Pratelli}.
\end{remark}

We use the Snell envelope contained in \cite[Appendix 1: (22), p.416-417]{DellacherieMeyer}. This version uses optional strong supermartingales. An optional strong supermartingale $(Z_t)_{t\geq 0}$ is an optional process such that for any bounded stopping time $\tau$ the random variable $Z_\tau$ is integrable and such that for any pair of bounded stopping times $\sigma\leq \tau$ the inequality $\E[X_{\tau} \mid \F_{\sigma}] \leq X_{\sigma}$ holds almost surely (see \cite[Appendix 1: Definition I, p.393-394]{DellacherieMeyer}). Note that \cadlagg supermartingales are optional strong supermartingales.

The following corollary of the Snell envelope is useful to prove the convex Bihari-LaSalle inequality \cref{thm:stochBihariConvex} a). 
Alternatively, with some more work the Snell envelope could also be directly applied  in the proof
\cref{thm:stochBihariConvex} a). However, we prefer to state the following corollary as it yields in addition also a characterization of Lenglart's concept of domination. As the author did not find this corollary in the literature, a proof is provided in the appendix.

\begin{corollary}[Characterization of Lenglart's concept of domination]\label{cor:characterizationOfLenglartDomination}
Let $X$ be dominated by $H$ and assume that $\E[H_0] <\infty$. Then there exists a \cadlagg local  supermartingale $(N_t)_{t\geq 0}$ with $N_0\leq 0$ such that 
\begin{equation*}
X_t\leq H_t + N_t \qquad \text{ for all } t\geq 0.
\end{equation*}
\end{corollary}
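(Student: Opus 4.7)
The plan is to construct $N$ as (a càdlàg version of) the Snell envelope of the right-continuous process $Y_t := X_t - H_t$. Since $X$ is adapted right-continuous and $H$ is predictable càdlàg, $Y$ is adapted and right-continuous, and the domination hypothesis is exactly $\E[Y_\tau \mid \F_0] \leq 0$ for every bounded stopping time $\tau$. The desired inequality $X_t \leq H_t + N_t$ will follow from $N_t \geq Y_t$, and $N_0 \leq 0$ will follow from the domination applied at time $0$.

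First I would set
$$\tilde N_t := \esssup\bigl\{\, \E[X_\tau - H_\tau \mid \F_t] : \tau \geq t,\ \tau \text{ a bounded stopping time}\,\bigr\}.$$
Taking $\tau = t$ gives $\tilde N_t \geq Y_t$, so $X_t \leq H_t + \tilde N_t$; and at $t=0$ the essential supremum is $\leq 0$ by the domination hypothesis. A standard stopping-time pasting argument (given bounded $\tau_1,\tau_2 \geq t$, let $A := \{\E[Y_{\tau_1}\mid \F_t] \geq \E[Y_{\tau_2}\mid \F_t]\} \in \F_t$ and set $\tau := \tau_1 \one_A + \tau_2 \one_{A^c}$) shows that the family over which the essential supremum is taken is directed upward. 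Consequently $\tilde N$ satisfies the strong supermartingale inequality $\E[\tilde N_\sigma \mid \F_\rho] \leq \tilde N_\rho$ for every pair of bounded stopping times $\rho \leq \sigma$.

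Second, I would appeal directly to the Dellacherie--Meyer Snell envelope statement quoted in the excerpt to realise $\tilde N$ as an optional strong supermartingale. To pass from optional strong to càdlàg I would use the right continuity of $Y$ and the usual conditions on $(\F_t)$: for any bounded stopping times $\tau_n \downarrow t$ one has $\E[Y_{\tau_n}\mid \F_t] \to Y_t$ (after the integrability provided by localization), which together with the standard argument identifies the right limit of $\tilde N$ with $\tilde N$ itself. This yields a càdlàg modification $N$ of $\tilde N$ which still satisfies $N \geq Y$ and $N_0 \leq 0$.

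Third, to obtain the local-supermartingale property, I would localize via $T_n := \inf\{t\geq 0 : H_t > n\}$. Since $H$ is non-decreasing, càdlàg, and only $\E[H_0]<\infty$ is assumed, $T_n \uparrow \infty$ almost surely, and on the stochastic interval $[0,T_n]$ the process $H$ is controlled by $n$ (with the final jump at $T_n$ handled by the predictability of $H$ through an announcing sequence). On such an interval $\tilde N$ is sandwiched between $-H$ and a uniformly integrable object, so $N^{T_n}$ is a genuine càdlàg supermartingale; thus $N$ is a càdlàg local supermartingale. The main obstacle I anticipate is the càdlàg regularization in the second step: the raw Dellacherie--Meyer Snell envelope is only optional strong (hence làdlàg), and deriving right continuity requires careful use of the right continuity of $Y$ together with the right continuity of the filtration. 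A secondary but routine point is the localization argument, which is what forces the passage from true supermartingale to \emph{local} supermartingale because no global $L^1$ control on $H$ is available.
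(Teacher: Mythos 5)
Your plan uses the same key tool as the paper --- the Dellacherie--Meyer Snell envelope --- and the same overall strategy, but the order of operations is reversed, and that reversal creates a genuine gap. You propose to define $\tilde N_t := \esssup\{\E[X_\tau - H_\tau \mid \F_t] : \tau \geq t \text{ bounded}\}$ globally, regularize, and localize only afterward. But the conditional expectations in this definition need not exist: the domination hypothesis $\E[X_\tau \mid \F_0] \leq \E[H_\tau \mid \F_0] \leq \infty$ permits both sides to be $+\infty$, so without prior localization $X_\tau - H_\tau$ may be an $\infty - \infty$ expression and $\tilde N$ is not well-defined. Moreover, the Snell envelope result as cited applies to non-negative processes, whereas $Y = X - H$ is unbounded below. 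The paper resolves both points by localizing \emph{before} applying the Snell envelope: it introduces the bounded stopping time $\sigma_n := \inf\{t : X_t \geq n\} \wedge \tau_n \wedge n$ (which stops $X$, controls $H$ via an announcing sequence $\tau_n$, and is capped by $n$), shifts the stopped process up by a constant $m$ so it becomes non-negative, applies the Snell envelope there, subtracts $m$, and then takes monotone limits first in $m$ and then in $n$. This double-limit construction, together with the $L^1$-convergence checks along the localizing sequence, is the technical core that your sketch skips.

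There are also two secondary issues. Your localizing sequence $T_n := \inf\{t : H_t > n\}$ controls $H$ but not $X$; the paper stops $X$ as well, which is precisely what yields the explicit bound $\E[\sup_t |X_{t\wedge\sigma_n} - H_{t\wedge\sigma_n}|] \leq n + 2\E[H_{\tau_n}] < \infty$ giving the uniform integrability used in the Snell step --- your claim that $\tilde N$ is "sandwiched between $-H$ and a uniformly integrable object" on $[0,T_n]$ leaves the upper bound unjustified without stopping $X$. And since only $\E[H_0] < \infty$ is assumed, $H_0$ may be unbounded, in which case no announcing sequence for $\inf\{t : H_t > n\}$ controls $H$ at time $0$; the paper first proves the statement under the extra hypothesis $H_0 \leq n_0$ and then reduces the general case by partitioning $\Omega$ into the sets $\{H_0 \in [k, k+1)\}$ and patching the resulting local supermartingales. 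Your proposal would need some version of both reductions to close.
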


By the general Doob-Meyer decomposition theorem we could also replace 'local supermartingale' by 'local martingale' in the corollary.

\subsection{Main Lemma: A Lenglart type estimate}
An extension of \cref{lemma:Lenglart} a) is provided in \cref{lemma:1} for the cases
\nameref{def:sup} and \nameref{def:nosup}. This lemma is one of the key steps of the proof of \cref{thm:stochBihariConvex} \ref{item:convexSup}. Moreover, \cref{lemma:1} can be used to prove other generalizations of stochastic Gronwall inequalities, which is done in the closely connected paper \cite{GeissConcave}.

\begin{lemma}[Lenglart type estimates]\label{lemma:1}
Fix some $T>0$ and $p\in(0,1)$ and let $(X,A,H,M)$ satisfy \nameref{def:sup} or \nameref{def:nosup}. We consider the following 6 cases, which arise from combining $\Asup$ or $\Anosup$ with one of the following three assumptions:
\begin{enumerate}
\item $H$ is predictable,
\item $M$ has no negative jumps,
\item $\E[H_T]<\infty$.
\end{enumerate}
Fix arbitrary $u,\lambda >0$  and set:
\begin{equation*}
\tau_u \define \tau \define \inf\{s \geq 0 \mid H\s \geq \lambda u\}, \qquad \sigma_u \define \sigma \define \inf\{ s\geq 0 \mid X\s > u\},
\end{equation*}
where $\inf\emptyset \define +\infty$. Then, the following estimate holds true for all $t\in[0,T]$:
\begin{equation}\label{eq:lemma1}
\one_{\{X^*_t > u\}} u \leq  X_{t\wedge \sigma_u} \wedge u  \leq I^{L,u}\ti + M^{L,u}\ti + H^{L,u}\ti.
\end{equation}
Here
$(I^{L,u}\ti)_{t\geq 0}$ is a non-decreasing process containing the integral term from \eqref{eq:GronwallAssumption} and \eqref{eq:GronwallAssumptionNoSup} respectively with an additional indicator function
\begin{equation*}
I^L\ti \define I^{L,u}\ti \define
\begin{cases}
\int_{(0,t]} \eta(X^*\sm) \one_{\{X^*\sm \, \leq u \}} \dd A\s  & \text{for \nameref{def:sup}}, \\[1em]
\int_{(0,t]} \eta(X\sm) \one_{\{X^*\sm \, \leq u \}} \dd A\s  & \text{for \nameref{def:nosup}}, 
\end{cases}
\end{equation*}
the process $(M^{L,u}\ti)_{t\geq 0}$ is a local martingale with \cadlagg paths starting in $0$ defined by
\begin{equation*}
M^L\ti \define M^{L,u}\ti \define
\begin{cases}
 \lim_{n\to\infty} M_{t\wedge \tau^{(n)} \wedge \sigma}  & \text{ if $H$ is predictable},  \\
M_{t\wedge \tau \wedge \sigma} &  \text{ if $M$ has no negative jumps}, \\  
\tilde M_{t\wedge \sigma}\one_{\{\EO[H_T]\leq u\}}  & \text{ if } \E[H_T]<\infty,
\end{cases}
\end{equation*}
(where $\tau^{(n)}$ denotes an announcing sequence of $\tau$ and $\tilde M\ti \define M\ti + \E[H_T\mid F_t] -\EO[H_T]$ for $t\in[0,T]$), 
and $(H^{L,u}\ti)_{t\geq 0}$ is a non-decreasing process depending on $H$:
\begin{equation*}
H^L\ti \define H^{L,u}\ti \define 
\begin{cases}
H\ti \wedge (\lambda u)  + u\one_{\{H\ti \geq  \lambda u\}} & \text{ if $H$ is predictable},  \\
H\ti \wedge (\lambda u)  + u\one_{\{H\ti \geq  \lambda u\}}  &  \text{ if $M$ has no negative jumps}, \\  
\EO[H_T]\wedge u &  \text{ if } \E[H_T]<\infty.  
\end{cases}
\end{equation*}
\end{lemma}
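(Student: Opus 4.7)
I would split the argument into the two inequalities in the display. The first, $\one_{\{X^*_t > u\}} u \leq X_{t\wedge \sigma_u} \wedge u$, is immediate: on $\{X^*_t > u\}$ some $X_s$ exceeds $u$ for $s \leq t$, so $\sigma_u \leq t$, and by right-continuity of $X$ together with $\sigma_u$ being the debut of the open set $\{X > u\}$ one has $X_{\sigma_u} \geq u$; on the complement the left-hand side is zero. The bound $X_{t\wedge\sigma_u}\wedge u \leq u$ is trivial.

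For the second inequality my plan is a single stopping argument that exploits two basic observations: for $s \leq \sigma_u$ we have $X^*_{s^-} \leq u$ (and $X_{s^-}\leq X^*_{s^-} \leq u$ in \nameref{def:nosup}), which lets us slip the indicator $\one_{\{X^*_{s^-} \leq u\}}$ into the integrand for free and bound the integral by $I^L_t$; and for $s < \tau_u$ we have $H_s < \lambda u$. The three cases then differ only in how we localize in order to discard the inconvenient terms $M_{t\wedge\sigma}$ and $H_{t\wedge\sigma}$.

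In the predictable case I would fix an announcing sequence $\tau^{(n)} \uparrow \tau$ with $\tau^{(n)} < \tau$ on $\{\tau > 0\}$ and stop at $\rho_n := t \wedge \sigma \wedge \tau^{(n)}$. Since $H_{\rho_n} \leq H_{\tau^{(n)}} < \lambda u$ and $H_{\rho_n}\leq H_t$, the assumption reads $X_{\rho_n} \leq I^L_t + M_{\rho_n} + H^L_t$; on $\{\tau > t\wedge\sigma\}$ one has $\rho_n = t\wedge\sigma$ eventually, which yields the desired bound on $X_{t\wedge\sigma}$. In the no-negative-jumps case I would stop directly at $\rho = t\wedge\sigma\wedge\tau$ and, applying the assumption at times $s<\tau$, deduce $M_{\tau^-} \geq -I^L_t - \lambda u$; the no-negative-jumps property $M_\tau \geq M_{\tau^-}$ transfers this bound to $M_\tau = M^L_t$. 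In the integrable case I would introduce the local martingale $\tilde M_t := M_t + \E[H_T \mid \F_t] - \EO[H_T]$, use $H_{t\wedge\sigma} \leq \E[H_T\mid\F_{t\wedge\sigma}]$ (monotonicity of $H$ plus the tower property), and obtain $X_{t\wedge\sigma} \leq I^L_t + \tilde M_{t\wedge\sigma} + \EO[H_T]$; the conclusion then follows by case-splitting on $\{\EO[H_T] \leq u\}$. The passage from $\Asup$ to $\Anosup$ changes nothing in this structure, only the integrand inside $I^L_t$.

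The main technical obstacle is that when the relevant stopping time lands on $\tau$ rather than on $t\wedge\sigma$, the stopping argument only controls $X_{\tau^{(n)}}$ or $X_{\tau^-}$, not the quantity $X_{t\wedge\sigma}$ we actually care about. The resolution is built into the definition of $H^L_t$: on the event in question we have $H_t \geq \lambda u$, hence $H^L_t \geq \lambda u + u$; combined with $X \geq 0$ and $H_s < \lambda u$ for $s < \tau$ (which together force the lower bound $M^L_t \geq -I^L_t - \lambda u$), this gives $I^L_t + M^L_t + H^L_t \geq u$, and this suffices because $X_{t\wedge\sigma}\wedge u \leq u$. The extra summand $u\one_{\{H_t \geq \lambda u\}}$ in $H^L_t$ is precisely engineered to absorb the gap between $X_{\tau^-}$ and $X_{t\wedge\sigma}$.
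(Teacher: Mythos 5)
Your proposal is correct and follows essentially the same route as the paper: split by the three hypotheses, localize with an announcing sequence of $\tau$ in the predictable case, exploit $M_\tau\geq M_{\tau^-}$ in the no-negative-jumps case, pass from $H$ to the auxiliary martingale $\tilde M$ in the integrable case, and let the extra summand $u\one_{\{H_t\geq\lambda u\}}$ (resp.\ the cut-off $\EO[H_T]\wedge u$) cover the event where $\tau$ intervenes. The only cosmetic departure is in the no-negative-jumps case, where the paper packages the argument through auxiliary processes $\tilde X_s=X_s\one_{\{s<\tau\}}$ and a truncated right-hand side $\tilde Y$ shown to be non-negative, whereas you derive the equivalent lower bound $M_{\tau^-}\geq -I^{L}_t-\lambda u$ directly from $X\geq 0$ and $H<\lambda u$ on $[0,\tau)$ and then transfer it to $M_\tau$.
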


\begin{remark}[Connection of \cref{lemma:1} and Lenglart's inequality]
\cref{lemma:1} is connected to Lenglart's inequality as follows: Let $(X, A, H, M)$ satisfy \nameref{def:sup} or \nameref{def:nosup}. Moreover, assume that $A\equiv 0$ and that $H$ is predictable. Then, we can apply \cref{lemma:1} yielding
\begin{equation}\label{eq:remark64-1}
\one_{\{X^*_t > u\}} u  \leq M^{L,u}\ti + H\ti \wedge (\lambda u)  + u\one_{\{H\ti \geq  \lambda u\}}.
\end{equation}
Under the assumptions listed above, $X$ is also dominated by $H$. Hence, we can also apply Lenglart's inequality and obtain:
\begin{equation}\label{eq:remark64-2}
\PP[X_T^*>u \mid \F_0] \, u \leq \lambda \, \E[(H_T \lambda^{-1}) \wedge u\mid \F_0] + \PP[H_T\lambda^{-1} \geq u\mid \F_0] \, u,
\end{equation}
Taking the conditional expectation given $\F_0$ of inequality \eqref{eq:remark64-1} implies by monotone convergence inequality \eqref{eq:remark64-2}.
\end{remark}

\begin{proof}[Proof of Lemma \ref{lemma:1}]
The inequality $\one_{\{X^*_t > u\}}u \leq X_{t\wedge \sigma}\wedge u$ follows easily from the right-continuity and non-negativity of $X$. It remains to prove the second inequality of \eqref{eq:lemma1}. \\

We denote by $Y$ the upper bound for $X$ which is given by \nameref{def:sup} or \nameref{def:nosup} respectively, i.e. for all $t\geq 0$
\begin{equation*}
X\ti \leq Y\ti \define M_{t} + 
H\ti + \begin{cases}
\int_{(0,t]}\eta(X^*\sm) \dd A\s &\text{for } \Asup, \\[1em]
\int_{(0,t]} \eta(X\sm) \dd A\s  & \text{for } \Anosup.
\end{cases}
\end{equation*}
\bigskip

\noindent \textbf{Step (a):}
We first prove the  inequality for the case that $H$ is predictable. Fix some $t\in[0,T]$. Because $H$ is predictable there exists a sequence of stopping times $(\tau^{(n)})_{n\in\N}$ that announces $\tau$. 
In particular, due to $H$ being non-decreasing, we have on $\{H_0 < \lambda u\}$ the inequality $H_{t\wedge\tau\nn\wedge\sigma} \leq H\ti\wedge (\lambda u)$.
Moreover, by definition of $\sigma$ the equality 
\begin{equation*}
\{s\leq \sigma\}  = \{s > \sigma\}^c   = \{ \exists r<s \mid X_r>u \}^c = 
\{X^*\sm > u\}^c = \{X^*\sm \leq u\}
\end{equation*}
 holds true for all $s>0$. Therefore, using 
 \eqref{eq:GronwallAssumption} or \eqref{eq:GronwallAssumptionNoSup} respectively, we have on $\{H_0 < \lambda u\}$:
\begin{equation*}
\begin{aligned}
 X_{t\wedge \tau^{(n)} \wedge \sigma}  & \leq M_{t\wedge \tau^{(n)} \wedge \sigma} + 
H_{t\wedge \tau^{(n)} \wedge \sigma} + \begin{cases}
\int_{(0,t]}\eta(X^*\sm) \one_{\{s\leq \sigma\} \cap \{s\leq \tau^{(n)}\}} \dd A\s &\text{for } \Asup, \\[1em]
\int_{(0,t]} \eta(X\sm) \one_{\{s\leq \sigma\} \cap \{s\leq \tau^{(n)}\}} \dd A\s  & \text{for } \Anosup.
\end{cases} \\
&\leq M_{t\wedge \tau^{(n)} \wedge \sigma} +  H\ti \wedge (\lambda u) + I^L\ti.
\end{aligned}
\end{equation*}
Moreover, note that due to non-negativity of $X$, we have
\begin{equation*}
X_{t\wedge \sigma}\wedge u - X_{t\wedge \tau^{(n)} \wedge \sigma}\wedge u \leq u \one_{\{\tau^{(n)} < t \}}.
\end{equation*}
The previous two inequalities and the definitions of $\tau$ and $(\tau^{(n)})_n$ imply on $\{H_0 < \lambda u\}$:
\begin{equation*}
\begin{aligned}
X_{t\wedge \sigma}\wedge u & \leq \limsup_{n\to\infty} X_{t\wedge \tau^{(n)} \wedge \sigma}\wedge u + \limsup_{n\to\infty} \big(X_{t\wedge \sigma}\wedge u - X_{t\wedge \tau^{(n)} \wedge \sigma}\wedge u\big)  \\
& \leq I^L\ti +  \lim_{n\to\infty}M_{t\wedge \tau^{(n)} \wedge \sigma} +  H\ti \wedge (\lambda u) + \lim_{n\to\infty} u \one_{\{\tau^{(n)} < t \}} \\
& \leq I^L\ti +  \lim_{n\to\infty}M_{t\wedge \tau^{(n)} \wedge \sigma} +  H\ti \wedge (\lambda u) + u \one_{\{H\ti \geq \lambda u\}}.
\end{aligned}
\end{equation*}
Hence, we have proven that \eqref{eq:lemma1} holds true on $\{H_0 < \lambda u\}$. On $\{H_0 \geq \lambda u\}$ we bound $X_{t\wedge \sigma} \wedge u$ by $u$, i.e. we have 
\begin{equation*}
X_{t\wedge \sigma}\wedge u \one_{\{H_0 \geq \lambda u\}} \leq  u\one_{\{H_0 \geq \lambda u\}} \leq  u\one_{\{H\ti \geq \lambda u\}}.
\end{equation*}
Noting that on $\{H_0 \geq \lambda u\}$ we have $\tau^{(n)} = \tau = 0$ and $M_0 = 0$, this implies by non-negativity of $I^L\ti$ the inequality \eqref{eq:lemma1} on $\{H_0 \geq \lambda u\}$ . \bigskip

\noindent \textbf{Step (b):} Next we prove the inequality for the case that $M$ has no negative jumps. Fix again some $t\in[0,T]$. In the proof of the previous assertion we used  $\lim_{n\to\infty} H_{t \wedge \sigma \wedge \tau^{(n)}} \leq H\ti\wedge (\lambda u)$ on $\{H_0 < \lambda u\}$. As the existence of an announcing sequence $(\tau^{(n)})_{n\in\N}$ of $\tau$ is not guaranteed if $H$ is not predictable, we need to take into account that $H$ might jump at time $\tau$ above $\lambda u$. To this end, we define for all $t\geq 0$:
\begin{equation*}
\begin{aligned}
\tilde{X}\ti &\define X_{t\wedge\tau}\one_{\{t<\tau\}} = X\ti \one_{\{t<\tau\}}, \\
\tilde Y\ti &\define M_{t\wedge \tau} + 
H\ti\wedge (\lambda u) + \begin{cases}
\int_{(0,t\wedge \tau]}\eta(X^*\sm) \dd A\s &\text{for } \Asup, \\[1em]
\int_{(0,t\wedge \tau]} \eta(X\sm) \dd A\s  & \text{for } \Anosup.
\end{cases}
\end{aligned}
\end{equation*}
It can be seen that $\tilde X$ is an adapted right-continuous process. 
Furthermore, $\tilde{Y}$ is non-negative:  For $t<\tau$, $\tilde{Y}_t =Y_t \geq X_t \geq 0$ by assumption. For $t\geq \tau$, $\tilde{Y}_t =\tilde{Y}_{\tau}$. The local martingale $M$ having no negative jumps implies that $\tilde{Y}$ has no negative jumps, and hence $0\leq Y_{\tau^-}= \tilde{Y}_{\tau^-} \leq \tilde{Y}_{\tau}$.  Hence, $\tilde{Y}$ is non-negative. This implies 
\begin{equation*}
\tilde X\ti 
= \begin{cases} X_t & \text{ on } \{t<\tau\} \\   0 & \text{ on } \{t\geq \tau\} 
\end{cases} 
\leq     \begin{cases} Y_t & \text{ on } \{t<\tau\} \\   \tilde{Y}_t & \text{ on } \{t\geq \tau\} 
\end{cases} = \tilde Y\ti
\end{equation*}
for all $t\geq 0$. By construction we have $\tilde X\s = X\s$ for $s\in[0,t]$ on $\{\tau > t\} = \{H_t<\lambda u\}$, and hence
\begin{equation*}
X_{t\wedge\sigma} \wedge u - \tilde{X}_{t\wedge \sigma \wedge \tau}\wedge u\leq 
(X_{t\wedge\sigma} \wedge u - X_{t\wedge \sigma \wedge\tau}\wedge u)\one_{\{t<\tau\}} + u\one_{\{t\geq \tau\}}  = u\one_{\{t\geq \tau\}}
\end{equation*}
Moreover, we have as in the proof of step (a) $\{s\leq \sigma\}= \{X^*\sm \leq u\}$. Therefore, on $\{H_0 <\lambda u \}$, we have for all $t\in[0,T]$
\begin{equation*}
\begin{aligned}
X_{t\wedge \sigma} \wedge u & = \tilde{X}_{t\wedge \tau \wedge \sigma}\wedge u + \big(X_{t\wedge \sigma} \wedge u - \tilde{X}_{t\wedge\tau \wedge \sigma}\wedge u\big)  \\
& \leq \tilde{Y}_{t\wedge \sigma \wedge \tau} + u \one_{\{\tau \leq  t \}} \\
& \leq I^L_t + M_{t\wedge\tau \wedge \sigma} + H_t \wedge (\lambda u) + u \one_{\{H_t \geq \lambda u \}}.
\end{aligned}
\end{equation*}
On $\{H_0 \geq \lambda u \} = \{\tau = 0\}$  we have as before 
$X_{t\wedge \sigma} \wedge u  \leq u \one_{\{H_t \geq \lambda u\}}$, implying that \eqref{eq:lemma1} is satisfied on $\{H_0 \geq \lambda u \}$. Combining the inequalities for $\{H_0 < \lambda u\}$ and $\{H_0 \geq \lambda u\}$  implies the claim. \\

\noindent \textbf{Step (c):} Now we prove the inequality for the case that $\E[H_T]<\infty$. We weaken \eqref{eq:GronwallAssumption} and \eqref{eq:GronwallAssumptionNoSup} as follows:
\begin{equation}
X\ti \leq \tilde M\ti +  \EO[H_T] + \begin{cases}
\int_{(0,t]} \eta(X^*\sm) \dd A\s  &  \text{for } \Asup, \\[1em]
\int_{(0,t]} \eta(X\sm) \dd A\s &  \text{for } \Anosup,
\end{cases} 
\end{equation}
where $\tilde M\ti \define M\ti + \E[H_T\mid F_t] -\EO[H_T]$. As our filtration satisfies by assumption the usual conditions, we may assume w.l.o.g. that $\tilde{M}$ is c\`adl\`ag. Using again, that $\{s \leq \sigma\} = \{X^*_{s^-} \leq u\}$, we obtain  for all $u>0$ on $\{\EO[H_T]\leq u\}$:
\begin{equation*}
\begin{aligned}
X_{t\wedge \sigma}\wedge u \leq I^L_t+  \tilde M_{t\wedge \sigma}  + \EO[H_T]\wedge u.
\end{aligned}
\end{equation*}
Noting that $X_{t\wedge \sigma}\wedge u \leq \E[H_T\mid \F_0] \wedge u$ on $\{\EO[H_T]\geq u\}$ implies the claim. 
\end{proof}

For the proof of \cref{thm:stochBihariConvex}\ref{item:convexNoSup} we need the following lemma, which is an immediate consequence of combining \cref{rmk:formulaZ} with  \cref{lemma:1}. For predictable $H$ \cref{lemma:2} is an immediate corollary of Lenglart's domination inequality. \cref{lemma:2} provides the upper bounds of \cref{thm:sharpnessAlpha}.
 
\begin{lemma}\label{lemma:2}
Let \nameref{def:sup} and $A\equiv 0$ hold and let $p\in(0,1)$.
\begin{enumerate}
\item If $H$ is predictable or $M$ has no negative jumps and $\E[H_T^p]<\infty$, we have for all $T>0$:
\begin{equation*}
\|X_T^*\|\pF \leq \alpha_1 \alpha_2 \|H_T\|\pF.
\end{equation*}

\item If $\E[H_T]<\infty$ we have for all $T>0$:
\begin{equation*}
\|X_T^*\|\pF \leq \alpha_1 \|H_T\|_{1,\F_0}.
\end{equation*}
\end{enumerate}
\end{lemma}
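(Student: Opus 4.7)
The plan is to apply \cref{lemma:1} with $A \equiv 0$, which makes the integral term $I^{L,u}$ vanish and reduces the lemma, for every $u > 0$, to
\begin{equation*}
\one_{\{X^*_T > u\}}\, u \leq X_{T\wedge \sigma_u} \wedge u \leq M^{L,u}_T + H^{L,u}_T.
\end{equation*}
In each of the three sub-cases of \cref{lemma:1} the process $M^{L,u}$ is a c\`adl\`ag local martingale starting at $0$, and the non-negativity of $X_{T\wedge\sigma_u}\wedge u$ forces $M^{L,u}_T \geq -H^{L,u}_T$. The lower bound $H^{L,u}_T$ is integrable: it is bounded by $(1+\lambda)u$ when $H$ is predictable or $\Delta M \geq 0$, and dominated by $\EO[H_T]$ in the $\E[H_T] < \infty$ case. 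A localization argument combined with Fatou's lemma therefore yields the supermartingale inequality $\EO[M^{L,u}_T] \leq 0$, and taking $\EO[\cdot]$ on both sides of the displayed inequality produces the tail estimate
\begin{equation*}
u\, \PO[X^*_T > u] \leq \EO[H^{L,u}_T].
\end{equation*}

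The second step is to upgrade this tail estimate to an $L^p$-bound using the identities collected in \cref{rmk:formulaZ}. Starting from $\EO[(X^*_T)^p] = p\int_0^\infty u^{p-1}\PO[X^*_T > u]\,du$ and substituting the previous bound reduces everything to evaluating a few explicit integrals, which differ between the two parts of the lemma.

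For part (a), where $H^{L,u}_T = H_T \wedge (\lambda u) + u\one_{\{H_T \geq \lambda u\}}$ produces two integrals, Fubini and the substitution $v = \lambda u$ combined with the third formula of \cref{rmk:formulaZ} evaluate the first to $\lambda^{1-p}(1-p)^{-1}\EO[H_T^p]$, while the first formula of \cref{rmk:formulaZ} evaluates the second to $\lambda^{-p}\EO[H_T^p]$. Optimizing $\lambda \mapsto \lambda^{1-p}(1-p)^{-1} + \lambda^{-p}$ over $\lambda > 0$ gives the minimizer $\lambda = p$ with minimum value $p^{-p}(1-p)^{-1}$; taking a $p$-th root recovers $\alpha_1\alpha_2 = (1-p)^{-1/p}p^{-1}$. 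For part (b), $H^{L,u}_T = \EO[H_T] \wedge u$ is a deterministic function of $u$ given $\F_0$, so the tail bound simplifies to $\PO[X^*_T > u] \leq (\EO[H_T]/u)\wedge 1$; splitting the outer integral at $u = \EO[H_T]$ produces $\EO[H_T]^p/(1-p)$, and the $p$-th root is $\alpha_1\|H_T\|_{1,\F_0}$.

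The only delicate point I anticipate is the first step, namely justifying $\EO[M^{L,u}_T] \leq 0$ uniformly across the three sub-cases of \cref{lemma:1}: each sub-case uses a different explicit form of $M^{L,u}$ (built from an announcing sequence $\tau^{(n)}$, the stopping time $\tau\wedge\sigma$, or the shifted martingale $\tilde M$), but in every case the localization is by bounded stopping times and the integrable lower bound sketched above renders the Fatou step routine. Beyond this, the proof is a computation with the three identities of \cref{rmk:formulaZ} together with a single one-variable optimization.
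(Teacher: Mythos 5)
Your proof is correct and follows essentially the same route as the paper: apply \cref{lemma:1} with $A\equiv 0$ to kill the integral term, eliminate the local-martingale contribution, and integrate the resulting tail bound against $p u^{p-2}\,\dd u$ using the identities of \cref{rmk:formulaZ}. The only cosmetic differences are that the paper discharges the local-martingale term by assuming w.l.o.g. that $M$ is a martingale (justified by localizing, monotonicity of $H$, and monotone convergence) where you use the integrable-lower-bound-plus-Fatou supermartingale argument, and that the paper simply chooses $\lambda=p$ where you rederive it by optimizing.
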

\begin{proof}[Proof of \cref{lemma:2}]
Assume w.l.o.g. that $M$ is a martingale, and hence also $M^{L,u}$ for any $u>0$. The inequalities can be proven by taking the conditional expectation of \eqref{eq:lemma1} given $\F_0$ and integrating  w.r.t $p u^{p-2}\dd u$. This gives (due to $A\equiv 0$):
\begin{equation*}
\begin{aligned}
\EO\bigg[ \int_0^\infty \one_{\{X^*_t > u\}} p u^{p-1} \dd u \bigg] & \leq  \int_0^\infty \EO[M^{L,u}\ti]p u^{p-2}\dd u + \EO\bigg[\int_0^\infty H^{L,u}\ti p u^{p-2} \dd u\bigg] \\
 & = \EO\bigg[\int_0^\infty H^{L,u}\ti p u^{p-2} \dd u\bigg].
\end{aligned}
\end{equation*}
It remains to compute the single terms. By \cref{rmk:formulaZ} we have:
\begin{equation*}
p \int_0^\infty \one_{\{X^*_t > u\}}  u^{p-1} \dd u  = (X_t^*)^p
\end{equation*}
If $\E[H_T^p] <\infty$ and either $H$ is predictable or $\Delta M \geq 0$, we have (choosing $\lambda = p$ and applying \cref{rmk:formulaZ} and recalling $\alpha_1 = (1-p)^{-1/p} $, $\alpha_2 =p^{-1}$):
\begin{equation}\label{eq:Hp-calc}
\begin{aligned}
\int_0^\infty H^{L,u}\ti p u^{p-2} \dd u 
& = \int_0^\infty \big(H\ti \wedge (\lambda u)  + u\one_{\{H\ti \lambda^{-1}\geq u\}} \big) p u^{p-2} \dd u \\
& = \lambda (1-p)^{-1}  (H_t \lambda^{-1})^p + (H_t \lambda^{-1})^p  \\
& = \alpha_1^p \alpha_2^p  H_t^p. 
\end{aligned}
\end{equation}
If $\E[H_T] <\infty$, we have: 
\begin{equation}\label{eq:Hp-calc2}
\int_0^\infty H^{L,u}\ti p u^{p-2} \dd u  = \int_0^\infty\big(\EO[H_T]\wedge u\big) p u^{p-2} \dd u =  (1-p)^{-1}  \EO[H_T]^p = \alpha_1^p  \EO[H_T]^p. 
\end{equation}
Combining the calculations above gives the claim.
\end{proof}

\color{black}


\subsection{Proof of the convex stochastic Bihari-LaSalle inequality}

The following lemma allows us to assume w.l.o.g. that the integrator $A$ is continuous and adapted instead of predictable. It can be proven by a time change argument, by smoothening out the large jumps of $A$. The proof is hidden away in the appendix. Recall that in \nameref{def:nosup} and \nameref{def:sup} the process $A$ is assumed to be predictable. In general, the assertion of the lemma is false if $A$ is not predictable, for a counterexample see \cref{example:notpredictable}.

\begin{lemma}[Continuity of integrator by time change]\label{lemma:timeshift}\label{lemma:smoothenJumps}
Assume that $(X_t)_{t\geq 0}$, $(A_t)_{t\geq 0}$, $(H_t)_{t\geq 0}$ and $(M_t)_{t\geq 0}$ satisfy \nameref{def:nosup} (or \nameref{def:sup}) on some filtered probability space  $(\Omega, \F,\PP, (\F_t)_{t\geq 0})$.
Denote by $A^c$ the continuous part of $A$ i.e. 
$A^c_t \define A_t-\sum_{s\leq t} \Delta A_s$ for all $t\geq 0$. Assume that $A^c$ is strictly increasing and $A^c_{\infty} = +\infty$. Then, there exists a time-changed version of $(X, A, H, M)$ such that the integrator is continuous, i.e. a family of stochastic processes  $(\tilde{X}, \tilde{A}, \tilde{H}, \tilde{M})$ satisfying \nameref{def:nosup} (or \nameref{def:sup}) on a filtered probability space  $(\Omega, \F,\PP, (\tilde{\F}_t)_{t\geq 0})$ such that:
\begin{enumerate}
\item $\tilde{A}_t = t$ for all $t\geq 0$,
\item for all $t\geq 0$:
\begin{equation*}
\tilde{X}_{A_t} = X_t, \quad  \tilde{M}_{A_t} = M_t, \quad  \tilde{H}_{A_t} = H_t,
\end{equation*}
and $A_s$ is a $(\tilde{\F}_t)_{t\geq 0}$ stopping time for every $s\geq 0$, 
\item if $H$ is predictable then $\tilde{H}$ is predictable,
\item if $M$ has no negative jumps then $\tilde{M}$ has no negative jumps,
\item and $(\Omega, \F,\PP, (\tilde{\F}_t)_{t\geq 0})$ satisfies the usual conditions.
\end{enumerate}
\end{lemma}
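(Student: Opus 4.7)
The plan is to perform a time change via the right-continuous generalized inverse of $A$, which stretches each jump of $A$ into a flat interval on which the time-changed processes remain constant. First I would define $\rho(u) \define \inf\{s \geq 0 : A_s > u\}$ for $u \geq 0$. Adaptedness and right-continuity of $A$ give $\{\rho(u) \leq t\} = \{A_t > u\} \in \F_t$, so each $\rho(u)$ is an $(\F_t)$-stopping time, and the assumptions that $A^c$ is strictly increasing and $A^c_{\infty} = \infty$ force $\rho$ to be continuous and finite-valued, with its only plateaus being the intervals $[A_{s-}, A_s]$ on which $\rho \equiv s$ for each jump time $s$ of $A$. I then set $\tilde\F_u \define \F_{\rho(u)}$, $\tilde A_u \define u$, and define $\tilde X$, $\tilde M$, $\tilde H$ as the c\`adl\`ag processes on $[0,\infty)$ determined by $\tilde Y_{A_t} \define Y_t$ for all $t \geq 0$ and $\tilde Y_u \define Y_{s-}$ for $u \in [A_{s-}, A_s)$ whenever $s$ is a jump time of $A$, where $Y$ stands for any of $X$, $M$, $H$. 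In words, on every plateau of $\rho$ the time-changed process is frozen at the pre-jump value of $Y$ and jumps to the post-jump value precisely at the right endpoint $A_s$ of the plateau.

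Next I would verify the structural properties. The usual conditions on $(\tilde\F_u)$ descend from $(\F_t)$ together with the right-continuity of $\rho$, and $\tilde M$ is an $(\tilde\F_u)$-local martingale by the classical time-change theorem for local martingales applied to the family $(\rho(u))_{u \geq 0}$. That each $A_s$ is an $(\tilde\F_u)$-stopping time reduces, via $\{\rho(u) \leq t\} = \{A_t > u\}$, to checking $\{A_s \leq u\} \cap \{A_t > u\} \in \F_t$: for $s \leq t$ both events are $\F_t$-measurable, and for $s > t$ monotonicity of $A$ forces the intersection to be empty. The core of the proof is the change-of-variables identity
\[
\int_{(0,t]} \eta(X_{s-}) \dd A_s \;=\; \int_0^{A_t} \eta(\tilde X_{v-}) \dd v,
\]
which holds because on each plateau $[A_{s-}, A_s)$ the left limit $\tilde X_{v-}$ equals $X_{s-}$ and the Lebesgue length of the plateau equals $\Delta A_s$. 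At $u = A_t$ this identity converts the original inequality into the time-changed one; for $u \in [A_{s-}, A_s)$ inside a plateau, taking left limits at $s$ in the original inequality yields $X_{s-} \leq \int_0^{A_{s-}} \eta(\tilde X_{v-}) \dd v + M_{s-} + H_{s-}$, and adding the non-negative plateau contribution $\eta(X_{s-})(u - A_{s-})$ preserves the bound at $u$. The $\Asup$ variant is identical after replacing $\eta(X_{s-})$ by $\eta(X^*_{s-})$; the freezing convention together with strict monotonicity of $A^c$ ensures $\tilde X^*_{v-} = X^*_{s-}$ throughout each plateau.

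Preservation of the remaining structure is routine. If $H$ is predictable, its jumps occur at predictable $(\F_t)$-stopping times $\tau$, and the corresponding jumps of $\tilde H$ occur at $u = A_\tau$, which is an $(\tilde\F_u)$-predictable stopping time because $A$ itself is predictable; this yields $(\tilde\F_u)$-predictability of $\tilde H$. If $\Delta M \geq 0$, the only jumps of $\tilde M$ occur at the right endpoints $u = A_t$ of plateaus (since $\tilde M$ is constant on each plateau), with size $\Delta M_t \geq 0$.

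The main obstacle is pinning down the correct convention on the plateaus. The na\"ive choice $\tilde Y_u \define Y_{\rho(u)}$ would replace the Stieltjes jump contribution $\eta(X_{s-}) \Delta A_s$ by the Lebesgue contribution $\eta(X_s)(A_s - A_{s-})$, which can strictly exceed the original when $X$ jumps upward and $\eta$ is strictly increasing; worse, the time-changed inequality then fails already at $u = A_{s-}$ because $\tilde X$ has jumped up to $X_s$ before the integral has had time to accumulate the compensating mass. The freezing convention $\tilde X_u = X_{s-}$ on $[A_{s-}, A_s)$ together with $\tilde X_{A_s} = X_s$ is what balances the transformation and keeps the bound valid at every $u \geq 0$, not only at times of the form $u = A_t$.
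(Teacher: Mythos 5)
The overall strategy — a time change through a generalized inverse of $A$, with the processes frozen at their pre-jump values on each plateau — does match the paper's, and your change-of-variables computation and the verification of the inequality at interior points of a plateau are correct. The fatal gap is the claim that $\tilde{M}$ is an $(\tilde{\mathcal{F}}_u)$-local martingale ``by the classical time-change theorem'' with $\tilde{\mathcal{F}}_u=\mathcal{F}_{\rho(u)}$. That theorem applies to the process $u\mapsto M_{\rho(u)}$, but your $\tilde{M}$ is \emph{not} this process: on a plateau $[A_{s-},A_s)$ you set $\tilde{M}_u=M_{s-}$, whereas $M_{\rho(u)}=M_s$, so the two differ by $\Delta M_s$. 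With $\tilde{\mathcal{F}}_u=\mathcal{F}_{\rho(u)}=\mathcal{F}_s$ for $u$ in the interior of the plateau, the increment $\tilde{M}_{A_s}-\tilde{M}_u=\Delta M_s$ is already $\tilde{\mathcal{F}}_u$-measurable, so the martingale property would force $\Delta M_s=0$ a.s., which is false in general. The paper flags exactly this: ``defining a suitable filtration $(\tilde{\mathcal{F}}_t)_{t\geq 0}$ seems to be non-trivial: The choice $\tilde{\mathcal{F}}_t=\mathcal{F}_{A^{-1}_t}$ is not possible since $\tilde{M}$ is not a martingale with respect to this filtration.''

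Building a filtration under which the frozen process $\tilde{M}$ really is a local martingale is where the predictability of $A$ is used in an essential way, and it occupies the bulk of the paper's proof. The paper first treats integrators whose jumps are bounded below (hence finitely many per bounded interval) by inserting a small time interval at each jump $\tau_k$, holding $\hat{Y}$ at $Y_{\tau_k-}$ there, and defining $\bar{\mathcal{F}}_t \define \bigvee_n \mathcal{F}_{\sigma_n(t)}$ with $\sigma_n$ built from an announcing sequence $\tau_k^{(n)}\uparrow\tau_k$; the upshot is that on the inserted interval the filtration is essentially the pre-jump $\sigma$-field $\mathcal{F}_{\tau_k-}$ rather than $\mathcal{F}_{\tau_k}$, which is precisely what makes the time-changed $\hat{M}$ a local martingale (shown via a $\pi$--$\lambda$ and uniform-integrability argument, not by citing a time-change theorem). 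One then composes with the continuous inverse of the smoothed $\hat{A}$, and the general case is obtained by truncating small jumps, setting $\tilde{\mathcal{F}}_t\define\bigcap_n\tilde{\mathcal{F}}_t^{(n)}$, and passing to the limit with another uniform-integrability argument. Your proposal needs this (or an equivalent) filtration construction; the appeal to the classical time-change theorem cannot do the work, because your $\tilde{M}$ is not the time change of $M$.
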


\begin{remark}
The assertion of \cref{lemma:timeshift} is not trivial if $A$ has jumps: Simply extending $A^{-1}(\omega): \rm{Image}(A)(\omega) \mapsto [0,\infty)$ to $A^{-1}(\omega): [0,\infty) \mapsto [0,\infty)$ and then setting $\tilde{Y} \define Y_{A^{-1}_t}$ for $Y\in\{X, M, H\}$ will not work in general, because these processes will not satisfy  inequality \eqref{eq:GronwallAssumptionNoSup} of \nameref{def:nosup} for $t\notin\rm{ Image}(A)(\omega)$. For more details see the first step of the proof.
\end{remark}

\begin{remark}\label{rmk:timeshift}
\cref{lemma:timeshift} can be also applied if $A^c$ is not strictly increasing and if $A^c_\infty <\infty$ by setting $\bar{A}_t \define A_t + \delta t$ for some $\delta>0$ and applying \cref{lemma:timeshift} to $(X, \bar{A}, H, M)$.
\end{remark}

\begin{remark}\label{rmk:HgeqEps} 
In the proofs of the stochastic Bihari-LaSalle inequalities, we will always assume $X\geq \varepsilon+c_0$ and $H\geq \varepsilon+c_0$ for some $\varepsilon>0$ instead of $X\geq c_0$ and $H\geq c_0$ (where $c_0$ is the constant from $\eta:[c_0,\infty) \to [0,\infty)$ in  \nameref{def:sup}  and \nameref{def:nosup}).  We do this to ensure that terms like $G(X_t)$ or $G(H_t)$ are finite.
We may do this without loss of generality because we can add an arbitrary $\varepsilon>0$ to \eqref{eq:GronwallAssumption} (or similarly \eqref{eq:GronwallAssumptionNoSup}) and slightly weaken \eqref{eq:GronwallAssumption} (using that $\eta$ is non-decreasing) to obtain for all $t\in[0,T]$:
\begin{equation*}
(X\ti + \varepsilon) \leq \int_{(0,t]} \eta(X^*\sm+\varepsilon) \dd A\s + M\ti + (H\ti + \varepsilon) \qquad \PP\text{-a.s}.
\end{equation*}
Proving the assertions of the theorems for the processes $(X\ti + \varepsilon)_{t\geq 0}$, $(A\ti)_{t\geq 0}$, $(M\ti)_{t\geq 0}$ and $(H\ti + \varepsilon)_{t\geq 0}$ and then taking the limit $\varepsilon\to 0$ will imply the assertions for the general case $X\geq c_0$, $H\geq c_0$. 
\end{remark}

\begin{proof}[Proof of \cref{thm:stochBihariConvex}]
\noindent\textbf{Proof of \ref{item:convexNoSup}:} We first prove the claim for continuous $A$. We assume w.l.o.g. that $H\geq c_0 + \varepsilon$  and $X\geq c_0  + \varepsilon$ on $\Omega$ for some constant $\varepsilon>0$. \\
We define
\begin{equation*}
f\colon (c_0,\infty)\times[0,\infty)\mapsto (c_0,\infty), \qquad (x,a) \mapsto G^{-1}(G(x) - a),
\end{equation*}
noting that $\| f(X_T^*,A_T)\|_p$ is the quantity we need to find an upper bound for. The function $f$ is indeed well-defined: Due to the convexity of $\eta$ and $\eta(c_0) = 0$ there exists some $K>0$ s.t. $\eta(x) \leq K (x-c_0)$ for all $x\in[c_0,c]$ where $c$ denotes the constant from the definition of $G$, see \eqref{eq:defG}. This implies $\lim_{x\to c_0} G(x) = -\infty$ and therefore $\textrm{domain}(G^{-1}) = \textrm{range}(G) = (-\infty, \lim_{x\to\infty} G(x))$. Moreover, we have $\textrm{range}(G^{-1}) = \textrm{domain}(G) = (c_0,\infty)$. Therefore, $f$ is well-defined.

Moreover, we have for all $x\in(c_0,\infty), a\in[0,\infty)$:
\begin{equation*}
\begin{aligned}
\tfrac{\partial}{\partial x} f(x,a) & = \frac{G'(x)}{G'(G^{-1}(G(x)-a)} = \frac{\eta(f(x,a))}{\eta(x)}, \\
\tfrac{\partial}{\partial a} f(x,a) & = - \eta(f(x,a)), \\
\tfrac{\partial^2}{\partial x^2} f(x,a) & = \frac{\eta(f(x,a))}{(\eta(x))^2}\big( \eta'(f(x,a))-\eta'(x)\big). \\
\end{aligned}
\end{equation*}

Denote by $(Y_t)_{t\geq 0}$ to be the right-hand side of \eqref{eq:GronwallAssumptionNoSup}. Instead of finding an upper bound for $\| f(X_T^*,A_T)\|_p$, it suffices by $X_t \leq Y_t$ to find an upper bound for $\| f(Y_T^*,A_T)\|_p$. To this end we will estimate $(f(Y_t,A_t))_{t\geq 0}$ using  It\^o's formula.

 We first show that the jump term
\begin{equation*}
\sum_{s\leq t} f(Y_s, A_s)-f(Y_{s-}, A_s) - \tfrac{\partial}{\partial x} f(Y_{s^-},A_s)\Delta Y_s 
\end{equation*}
that occurs in the It\^o formula for $(f(Y_t,A_t))_{t\geq 0}$ is non-positive. (Recall that we assumed that $A$ is continuous.) By assumption $\eta'$ is non-decreasing and $f(x,a) \leq x$, therefore $\tfrac{\partial^2}{\partial x^2} f(x,a) \leq 0$ holds.  This implies by a Taylor's expansion that for all fixed $a>0$ and for all $x, x+\Delta x\in(c_0,\infty)$
\begin{equation*}
f(x+\Delta x, a) - f(x, a) -  \tfrac{\partial}{\partial x} f(x,a)\Delta x \leq 0,
\end{equation*}
and therefore the jump term in  It\^o formula for $(f(Y_t,A_t))_{t\geq 0}$ is non-positive. Hence, It\^o's formula implies:
\begin{equation*}
\begin{aligned}
f(Y_0,A_0) & = H_0, \\
\dd f(Y_t, A_t) & \leq \tfrac{\partial}{\partial x} f(Y_{t^-},A_t)  \dd Y_t + 
\tfrac{\partial}{\partial a} f(Y_{t^-},A_t)  \dd A_t + \frac{1}{2} 
\tfrac{\partial^2}{\partial x^2} f(Y_{t^-},A_t) \dd \langle Y^c,Y^c \rangle_t \\
& \leq \eta(f(Y_{t^-},A_t)) \frac{\eta(X_{t^-})}{\eta(Y_{t^-})}\dd A_t + \frac{\eta(f(Y_{t^-},A_t))}{\eta(Y_{t^-})} \dd H_t + 
\frac{\eta(f(Y_{t^-},A_t))}{\eta(Y_{t^-})} \dd M_t - \eta(f(Y_{t^-},A_t)) \dd A_t \\
& \leq  \dd \tilde M_t + \dd H_t,
\end{aligned}
\end{equation*}
where $\tilde{M}_t \define \int_{(0,t]}\frac{\eta(f(Y_{t^-},A_t))}{\eta(Y_{t^-})} \dd M_t$
is a local martingale starting in $0$.  Note that $\Delta M_t \geq 0$ implies $\Delta \tilde{M}_t \geq 0$ for all $t\geq 0$. Due to $f\geq 0$, the family of processes ($f(Y,A)$, the process which is constant $0$, $H$, $\tilde{M}$) satisfy \nameref{def:sup} e.g. for $\tilde{\eta}(x) \equiv x$. 

For predictable $H$ the estimate \eqref{eq:Mark1} follows immediately by applying Lenglart's inequality to $f(Y_t, A_t) \leq \tilde M_t + H_t$.
In the other cases, we apply \cref{lemma:1} to   ($f(Y,A)$, the process which is constant $0$, $H$, $\tilde{M}$) and obtain (using the notation of the lemma)
\begin{equation*}
\PP[\sup_{t\in[0,T]}f(Y_t, A_t)>u \mid \F_0] \leq \frac{1}{u}\E[\tilde{M}_T^{L,u} + H_T^{L,u}\mid \F_0]. 
\end{equation*}
Noting that $\tilde{M}^{L,u}$ is a local martingale, $\tilde{M}_{t}^{L,u} + H_t^{L,u}\geq 0$, we obtain by Fatou's lemma
\begin{equation*}
\begin{aligned}
\PP[\sup_{t\in[0,T]}f(X_t, A_t)>u\mid \F_0 ] 
& \leq \PP[\sup_{t\in[0,T]}f(Y_t, A_t)>u\mid \F_0 ] \\
& \leq \frac{1}{u}\EO[\liminf_{n\to\infty} (\tilde{M}_{T\wedge \tau_n}^{L,u} + H_{T\wedge \tau_n}^{L,u})] \\
& \leq \frac{1}{u} \liminf_{n\to\infty}\EO[\tilde{M}_{T\wedge \tau_n}^{L,u} + H_{T\wedge \tau_n}^{L,u}]  = \frac{1}{u}\EO[H_T^{L,u}] 
\end{aligned}
\end{equation*}
which is \eqref{eq:Mark1}. 
Inequality \eqref{eq:Mark1} implies \eqref{eq:Mark1-r} by the following caluclation. For all $u>c_0$, $w, R>0$ we have:
\begin{equation*}
\begin{aligned}
\PP\bigg[\sup_{t\in[0,T]} X_t > u \,\bigg|\, \F_0\bigg] 
&  \leq \PP\bigg[\sup_{t\in[0,T]} G^{-1}(G(X_t)- R) > G^{-1}(G(u)- R) , A_T \leq R \,\bigg|\, \F_0\bigg]
 + \PP[A_T > R \mid \F_0 ] \\
& \leq \PP\bigg[\sup_{t\in[0,T]} G^{-1}(G(X_t)- A_t) > G^{-1}(G(u)- R) \,\bigg|\, \F_0\bigg]
 + \PP[A_T > R \mid \F_0 ] \\ 
& \leq \begin{cases}
  \frac{\EO[H_T \wedge w ]}{G^{-1}(G(u) - R)} + \PP[H_T\geq w \mid \F_0 ] + \PP[A_T > R\mid \F_0] & \text{if } H \text{ is predictable or }  \Delta M \geq 0 \\[1em]
   \frac{\EO[H_T]}{G^{-1}(G(u) - R)}\wedge 1 + \PP[A_T > R\mid \F_0] & \text{if } \E[H_T] < \infty. 
\end{cases}
\end{aligned}
\end{equation*}

To obtain \eqref{eq:orignal-estimate}, apply \cref{lemma:2} to ($f(Y,A)$, the process which is constant $0$, $H$, $\tilde{M}$):
\begin{equation*}
\begin{aligned}
 \big\| G^{-1}(G(X^*_T) - A_T)\big \|\pF   &\leq  \big\| \sup_{t\in[0,T]} G^{-1}(G(X_t) - A_t)\big \|\pF 
 \leq  \big\| \sup_{t\in[0,T]} G^{-1}(G(Y_t) - A_t)\big \|\pF \\
 & = \big\| \sup_{t\in[0,T]}f(Y_t, A_t) \big\|\pF
 \leq \begin{cases}
  \alpha_1 \| H_T\|_{1,\F_0}  & \text{if } \E[H_T] < \infty, \\
  \alpha_1\alpha_2 \| H_T\|\pF & \text{if } \Delta M \geq 0, \,\, \E[H_T^p] < \infty, \\
\alpha_1\alpha_2 \| H_T\|\pF & \text{if }  H \text{ predictable,} \,\, \E[H_T^p] < \infty.  
\end{cases}
\end{aligned}
\end{equation*}
This proves the assertion for continuous $A$.

\smallskip

Now we prove the assertion for non-continuous (but predictable) $A$. We may assume w.l.o.g. that the continuous part of $A$ is strictly increasing and $A^c_\infty = \infty$, for details see \cref{rmk:timeshift}. Let  $(\tilde{X}, \tilde{A}, \tilde{H}, \tilde{M})$ the family of processes and $(\tilde{\F}_t)_{t\geq 0}$ the filtration we obtain by applying \cref{lemma:smoothenJumps} to $(X, A, H, M)$. Fix some arbitrary $T>0$. As
$A_T$ is a $(\tilde{\F}_t)_{t\geq 0}$ stopping time, $(t\wedge A_T)_{t\geq 0}$ is a continuous adapted process, so we may apply the first part of this proof to $((\tilde{X}_{t\wedge A_T})_{t\geq 0}, (t\wedge A_T)_{t\geq 0}, (\tilde{H}_{t\wedge A_T})_{t\geq 0}, (\tilde{M}_{t\wedge A_T})_{t\geq 0})$, to obtain for all $\tilde{T}>0$
\begin{equation*}
\begin{aligned}
& \PP\bigg[\sup_{t\in[0,\tilde{T}\wedge A_T]} G^{-1}(G(\tilde{X}_t) - t) > u \,\bigg|\, \F_0\bigg]  \\
& \leq  \begin{cases}
  \frac{1}{u}\EO[\tilde{H}_{\tilde{T}\wedge A_T} \wedge (\lambda u) ] + \PP[\tilde{H}_{\tilde{T}\wedge A_T} \geq \lambda u \mid \F_0 ] & \text{if } H \text{ is predictable or }  \Delta M \geq 0 \\
   \frac{1}{u}\EO[\tilde{H}_{\tilde{T}\wedge A_T}]\wedge u & \text{if } \E[H_T] < \infty. 
\end{cases}
\end{aligned}
\end{equation*} 
and
\begin{equation*}
\begin{aligned}
\big\| \sup_{t\in[0,A_T \wedge \tilde{T}]} G^{-1}(G(\tilde{X}_t) - t)\big \|\pF  
  \leq \begin{cases}
  \alpha_1 \| \tilde{H}_{A_T \wedge \tilde{T}}\|_{1,\F_0}  & \text{if } \E[H_T] < \infty, \\
  \alpha_1\alpha_2 \|\tilde{H}_{A_T \wedge \tilde{T}}\|\pF & \text{if } \Delta M \geq 0, \,\, \E[H_T^p] < \infty, \\
\alpha_1\alpha_2 \|\tilde{H}_{A_T \wedge \tilde{T}}\|\pF & \text{if }  H \text{ predictable,} \,\, \E[H_T^p] < \infty.  
\end{cases}
\end{aligned}
\end{equation*}
Here we used that \cref{lemma:smoothenJumps} ensures that $\tilde{H}$ is predictable if $H$ is predictable, and $\Delta \tilde{M} \geq 0$ if $\Delta M \geq 0$.

By letting $\tilde{T} \to \infty$, monotone convergence, $\tilde{X}_{A_t} = X_t$ and $\tilde{H}_{A_t} = H_t$ for all $t\geq 0$, this implies the assertion for non-continuous $A$. \\

\smallskip

\noindent \textbf{Proof of \ref{item:convexSup}:} It suffices to prove  \cref{thm:stochBihariConvex} \ref{item:convexSup} for continuous $A$, as the assertion can be extended by \cref{lemma:smoothenJumps} to predictable $A$ as in the proof of \cref{thm:stochBihariConvex} \ref{item:convexNoSup}. We assume w.l.o.g. that $H\geq c_0 + \varepsilon$  and $X\geq c_0  + \varepsilon$ on $\Omega$ for some constant $\varepsilon>0$. \\

We first sketch the idea of the proof, then we provide the details. Denote  by $(Y_t)_{t\geq 0}$ the right-hand side of inequality \eqref{eq:GronwallAssumption} of \nameref{def:sup} and let $f$ be as in the proof of b). Under \nameref{def:sup}, the inequality $\dd f(Y_t, A_t)\leq \dd \tilde{M}_t + \dd H_t$ does not hold because terms of It\^o's formula fail to cancel out. 
However, for any $p\in(0,1)$, it can be shown that 
$((X^*_t)^p)_{t\in[0,T]}$ satisfies \nameref{def:nosup} for $\eta_p(x) \define \frac{p}{1-p}\eta(x^{1/p})x^{1-1/p}$. Thus, similarly as in the proof of \ref{item:convexNoSup}, an application of It\^o's formula implies an estimate for $\tilde{G}_p^{-1}( \tilde{G}_p((X_T^*)^p) - A_T)$ where $\tilde{G}_p(x) \define \int_c^x  \eta_p(s)^{-1} \dd s$ for some $c>c_0$. \\

We first show that $((X^*_t)^p)_{t\in[0,T]}$ satisfies \nameref{def:nosup} for $\eta_p(x) \define \frac{p}{1-p}\eta(x^{1/p})x^{1-1/p}$:
We apply \cref{lemma:1} and obtain e.g. for the case $\E[H_T] <\infty$ for all $u>0$,  $t\in[0,T]$
\begin{equation}\label{eq:proofconvex-01}
\one_{\{X^*_t >u\}} u \leq  \int_{(0,t]} \eta(X^*_{s^-}) \one_{\{X^*_{s^-}\leq u\}} \dd A_s + M^{L,u}_t + \EO[H_T]\wedge u.
\end{equation}
Let $\tau$ be a bounded stopping time. The right-hand side of
\eqref{eq:proofconvex-01} is non-negative and $ M^{L,u}$ is a local martingale. Hence, taking the conditional expectation, Fatou's lemma and monotone convergence implies
\begin{equation}
\PO[X^*_{\tau\wedge T} >u ] u \leq  \EO\left[\int_{(0,{\tau\wedge T}]} \eta(X^*_{s^-}) \one_{\{X^*_{s^-}\leq u\}} \dd A_s \right] + \EO[H_T]\wedge u.
\end{equation}
We integrate this inequality w.r.t. $p u^{p-2} \dd u$ and apply the formulas \eqref{eq:formulaZ}. To this end we first compute the single terms:
\begin{equation*}
\begin{aligned}
\int_0^\infty \PO[X^*_{\tau\wedge T} >u ] p u^{p-1} \dd u
 & = \EO\left[\int_0^\infty \one_{\{X^*_{\tau\wedge T} >u \}} p u^{p-1} \dd u \right]  \overset{\eqref{eq:formulaZ}}{=}  \EO[(X^*_{\tau\wedge T})^p] \\
\int_0^\infty \EO\left[\int_{(0,{\tau\wedge T}]} \eta(X^*_{s^-}) \one_{\{X^*_{s^-}\leq u\}} \dd A_s \right]  p u^{p-2} \dd u   
& = 
\EO\left[\int_{(0,{\tau\wedge T}]} \eta(X^*_{s^-}) \int_0^\infty \one_{\{X^*_{s^-}\leq u\}}p u^{p-2} \dd u \dd A_s \right]   \\
& \overset{\eqref{eq:formulaZ}}{=}
\EO\left[\int_{(0,{\tau\wedge T}]} \eta(X^*_{s^-}) \frac{p}{1-p}
(X^*_{s^-})^{p-1} \dd A_s \right] \\
& =   \EO\left[\int_{(0,{\tau\wedge T}]} \eta_p\big( (X^*_{s^-})^p \big) \dd A_s \right]
\end{aligned}
\end{equation*}
Moreover, the calculations of the proof of \cref{lemma:2} imply:
\begin{equation*}
\EO\bigg[\int_0^\infty H^{L,u}_{\tau\wedge T} p u^{p-2} \dd u\bigg]
\leq \EO\bigg[\int_0^\infty H^{L,u}_T p u^{p-2} \dd u\bigg]
  =\begin{cases}
\alpha_1^p \alpha_2^p  \EO[H_T^p] & \text{ if } H \text{ is predictable or } \Delta M \geq 0 \\
\alpha_1^p  \EO[H_T]^p & \text{ if } \E[H_T] < \infty
\end{cases}  =: \bar{H}_{\tau \wedge T}.
\end{equation*}
Combining the calculations implies for any bounded stopping time $\tau$
\begin{equation}
\EO[ (X^*_{\tau\wedge T})^p]  \leq   \EO\bigg[\int_{(0,\tau\wedge T]} \eta_p\big((X^*\sm)^p\big) \dd A\s  + \bar{H}_{\tau\wedge T} \bigg].
\end{equation}
By applying \cref{cor:characterizationOfLenglartDomination} and the general Meyer-Doob decomposition to $\left((X^*_{t \wedge T})^p\right)_{t\geq 0}$ and \\ $\left(\int_{(0,t\wedge T]}\eta_p\big((X^*_{s\wedge T-})^p \big) \dd A\s +\bar{H}_t\right)_{t\geq 0}$ we obtain that there exists a \cadlagg local martingale $(M^{S}_t)_{t\geq 0}$ with $M^{S}_0 =0$ such that 
\begin{equation}\label{eq:proofconvex-02}
 (X^*_{t\wedge T})^p \leq \int_{(0,t\wedge T]} \eta_p\big((X^*_{s\wedge T-})^p \big) \dd A\s
+ \bar{H}_t + M^{S}_t \qquad \forall t\geq 0.
\end{equation}
Hence, $(\left((X^*_{t \wedge T})^p\right)_{t\geq 0}, A, \bar{H}, M^S)$ and $\eta_p$ satisfy \nameref{def:nosup}. 

The rest of the proof is very similar to the proof of assertion b): Denote by $Z_t$ the right-hand side of \eqref{eq:proofconvex-02}. We redefine $f$ now using $\tilde{G}_p \define 
\int_{c^{p}}^{x} \frac{\dd u}{\eta_p(u)}$ instead of $G$:
\begin{equation*}
f\colon(c_0^p,\infty)\times[0,\infty) \mapsto (c_0^p,\infty), \qquad (x,a) \mapsto \tilde{G}_p^{-1}(\tilde{G}_p(x) - a),
\end{equation*}
By the same calculation as in the proof of b) we have by It\^o's formula:
\begin{equation*}
f(Z_0,A_0)  = \bar{H}_0, \qquad 
\dd f(Z_t, A_t) \leq \frac{\eta_p(f(Z_{t^-},A_t))}{\eta_p(Z_{t^-})} \dd \bar{H}_t + 
\frac{\eta_p(f(Z_{t^-},A_t))}{\eta_p(Z_{t^-})} \dd M^S_t.
\end{equation*}
So, using that $f\geq 0$, an application of Fatou's lemma implies
\begin{equation}\label{eq:proofconvex3}
\EO[f(Z_T, A_T)] =  \EO[\tilde{G}_p^{-1}(\tilde{G}_p(Z_T)-A_T)] \leq \EO[\bar{H}_T]. 
\end{equation}
We have \textcolor{blue}{for all $x^{1/p} > c_0$ and $y\in \text{domain}(\tilde{G}^{-1}_p)$}
\begin{equation*}
\tilde G_p(x) = (1-p)G(x^{1/p}), \quad \tilde{G}_p^{-1}(y) = \big(G^{-1}\big(\tfrac{y}{1-p} \big)\big)^p
\end{equation*}
(see \eqref{eq:G_umrechnen} for details). Together, using $(X_T^*)^p\leq Z_T$, we have
\begin{equation*}
\begin{aligned}
\EO[G^{-1}(G(X^*_T)-\beta A_T)^p] 
&= \EO[\tilde{G}_p^{-1}(\tilde{G}_p( (X^*_T)^p)-A_T)] \\
&\leq \EO[\tilde{G}_p^{-1}(\tilde{G}_p(Z_T)-A_T)] \\
&\leq \EO[\bar{H}_T]
\end{aligned}
\end{equation*}
which is assertion b).
\end{proof}

\subsection{Proof of sharpness (\cref{lemma:pathdependentSDE}, \cref{thm:sharpnessBeta},  \cref{thm:sharp-tail})}

\begin{proof}[Proof of \cref{lemma:pathdependentSDE}]
We first prove that \eqref{eq:path-dependent-SDE-sharpness} has a non-negative solution exploiting that either $b$ or $\sigma$ are always $0$. More precisely, on the time intervals $(0,\varepsilon\delta)$, $(\delta, \delta +\varepsilon\delta)$, $(2\delta, 2\delta +\varepsilon\delta)$, ..., the coefficient $b$ is identically $0$. On  $(\varepsilon\delta, \delta)$, $(\delta+\varepsilon\delta, 2\delta)$, $(2\delta+\varepsilon\delta, 3\delta)$,  \dots   \, the coefficient $\sigma$ is $0$.

To simplify the notation we define the processes $(B^i_t)_{t\in[0,(i+\varepsilon) \delta)}$ by
\begin{equation*}
B^i_t  \define \begin{cases}
0 & \quad \forall t\in[0, i \delta) \\
\int_{i\delta}^{t} l(u-i\delta) \dd W_u & \quad \forall t\in[i \delta, i\delta + \delta \varepsilon)
\end{cases}
\quad =  \quad \begin{cases}
0 & \quad \forall t\in[0, i \delta) \\
\tilde{B}^i_{h(t-i\delta)} & \quad \forall t\in[i \delta, i\delta + \delta \varepsilon).
\end{cases}
\end{equation*}
where $\{(\tilde{B}^i\ti)_{t\geq 0}, i\in\N_0\}$ is a family of independent Brownian motions and $h(t) \define \int_0^t l^2(u) \dd u$ for all $t\in[0,\varepsilon\delta)$.  Note that $h(0)=0$, $h$ is increasing, continuous and $h(\varepsilon\delta) = +\infty$. Therefore, $(\tilde{B}^i_{h(t)})_{t\in[0,\varepsilon\delta)} $ can be seen as a sped-up Brownian motion.

\smallskip

\noindent \textbf{Step 1: Construction of $X_t$ for $t\in[0,\delta]$} \\ We define
\begin{equation*} 
\tau_0 \define \inf\{t\in[0,\varepsilon \delta) \mid 1 + B^0_{t}=0 \} \quad \text{ setting here } \inf\emptyset \define 0. 
\end{equation*}
As we assumed that the underlying filtered probability space satisfies the usual conditions, i.e. is in particular complete, $\tau_0$ is indeed a stopping time.
As on $[0,\delta \varepsilon]$ we have $g_{\varepsilon,\delta} = 0$, i.e. $b=0$, the path-dependent SDE \eqref{eq:path-dependent-SDE-sharpness} corresponds for all $t\in[0,\delta\varepsilon]$ to:
\begin{equation*}
X_t =  1 + \int_0^t b(s,X) \dd s + \int_0^t \sigma(s,X)\dd W_s 
= 1 +  \int_0^t \one_{\{X_s >0 \}} l(s) \dd W_s.
\end{equation*}
Hence, $X_t = 1 + B^0_{t\wedge \tau_0}$ satisfies \eqref{eq:path-dependent-SDE-sharpness}  for $t\in[0,\delta\varepsilon]$.
Note, that $\tau_0 < \varepsilon \delta$ on $\Omega$. By construction, we have $X_{\varepsilon\delta} = X_{\tau_0} = 0$ $\PP$-almost surely. For all $t\in]\delta\varepsilon, \delta]$ we have (due to $\sigma$ being $0$ here):
\begin{equation*}
X_t = X_{\varepsilon\delta} + \int_{\varepsilon\delta}^t b(s,X) \dd s + \int_{\varepsilon\delta} ^t \sigma(s,X)\dd W_s = \int_{\delta \varepsilon}^t X^*_s \dd s  = (t-\delta\varepsilon) X_{\tau_0}^*.
\end{equation*}
In particular, we have $X_{\delta} = \gamma X_{\tau_0}^*$ $\PP$-almost surely where $\gamma\define (1-\varepsilon)\delta<1$. So, we have constructed the following non-negative solution (upto a null set) on the time interval $[0,\delta]$:
\begin{equation*}
X_t = 
\begin{cases}
 1 + B^0_{t\wedge \tau_0} \quad & \forall t\in[0,\varepsilon\delta] \\
(t-\delta\varepsilon) X_{\tau_0}^* \quad & \forall t\in(\varepsilon\delta, \delta].
\end{cases}
\end{equation*}
Due to $\tau_0<\varepsilon \delta$ and $\int_0^t l^2(u) \dd u < \infty$ for all $t<\varepsilon\delta$, we have
\begin{equation*}
\int_0^{\delta} |b(s,X)| \dd s + \int_0^{\delta} |\sigma(s,X)|^2 \dd s 
= \int_{\varepsilon\delta}^{\delta} |b(s,X)| \dd s + \int_0^{\tau_0} l(u)^2 \dd s 
< \infty \qquad \PP\text{-a.s.}
\end{equation*}

\smallskip

\noindent \textbf{Step 2: Construction of $X_t$ for $t\in(k\delta,(k+1)\delta]$} \\ Assume we have constructed $(X_t)_{t\in[0,k\delta]}$ for some $k\in\N$. Now we construct a solution on $(k\delta, (k+1)\delta]$. Similarly as before, we set
\begin{equation*} 
\tau_k \define \inf\{t\in[k\delta,k\delta + \varepsilon \delta) \mid X_{k\delta} + B^k_{t} =0 \} \quad \text{ setting here } \inf\emptyset \define 0.
\end{equation*}
Due to completeness of the underlying filtered probability space $\tau_k$ is a stopping time. 
By definition, we have $g_{\varepsilon,\delta} = 0$ i.e. $b=0$ on $(k\delta, k\delta + \varepsilon\delta)$, and hence \eqref{eq:path-dependent-SDE-sharpness} corresponds for $t\in[k\delta, k\delta + \varepsilon\delta]$ to
\begin{equation*}
X_t =  X_{k\delta} + \int_{k\delta}^t b(s,X) \dd s + \int_{k\delta}^t \sigma(s,X)\dd W_s 
=  X_{k\delta} + \int_{k\delta}^t  \one_{\{X_s >0 \}} l(s-k\delta)  \dd W_s 
\end{equation*}
and therefore $X_t =  X_{k\delta} + B^k_{t\wedge \tau_k}$ is a solution of \eqref{eq:path-dependent-SDE-sharpness} for $t\in(k\delta, k\delta + \varepsilon\delta]$.
As before we have $\tau_k < k\delta + \varepsilon \delta$ on $\Omega$ and $X_{(k+\varepsilon) \delta} = X_{\tau_k} = 0$  $\PP$-almost surely. For all $t\in(k\delta + \delta\varepsilon, (k+1)\delta]$ we have (due to $\sigma$ being $0$ here):
\begin{equation*}
X_t = X_{k\delta + \varepsilon\delta} + \int_{k\delta + \varepsilon\delta}^t b(s,X) \dd s + \int_{k\delta + \varepsilon\delta} ^t \sigma(s,X)\dd W_s = \int_{k\delta + \delta \varepsilon}^t X^*_s \dd s  = (t-k\delta - \delta\varepsilon) X_{\tau_k}^* \leq \gamma X_{\tau_k}^*
\end{equation*}
and $X_{(k+1)\delta} = \gamma X_{\tau_k}^*$. Hence, 
\begin{equation*}
X_t = 
\begin{cases}
 X_{k\delta} + B^k_{t\wedge \tau_k} \quad & \forall t\in[k\delta,k\delta + \varepsilon\delta] \\
(t-(k\delta +\delta\varepsilon)) X_{\tau_k}^* \quad & \forall t\in(k\delta + \varepsilon\delta, (k+1)\delta]
\end{cases}
\end{equation*}
is a non-negative solution of \eqref{eq:path-dependent-SDE-sharpness}. By the same calculation as in step 1 it satisfies  \eqref{eq:lemma-path-dependent-SDE-1}.

\bigskip

\noindent \textbf{Step 3: Proof of $\E[(X_{k\delta + \varepsilon \delta}^*)^p] = \E[\Xvdkn]= (1-p)^{-1}\big(1 + p(1-p)^{-1} \gamma\big)^k$ for all  $p\in(0,1), \,k\in\N$}  \\
We prove the equality by induction over $k$. To this end, note that if $\Xvdkn >  \Xvdkln$, then the supremum of $X^p$ on $[0,\tau_k]$ must occur on $[k\delta, \tau_k]$, since by construction
\begin{equation*}
X_t 
= \begin{cases}
0  & \text{ if } t\in[\tau_{k-1}, (k-1)\delta +\varepsilon\delta] \\
\int^t_{(k-1+\varepsilon)\delta}X_s^* \dd s\leq \gamma X_{\tau_{k-1}}^*
& \text{ if } t\in[(k-1)\delta +\varepsilon \delta, k\delta] \\
X_{k\delta} +  B^k_{t\wedge \tau_k}
=   \gamma X_{\tau_{k-1}}^* +  B^k_{t\wedge\tau_k}  & \text{ if } t\in[k\delta,\tau_k]
\end{cases}
\end{equation*}
and $\gamma=(1-\varepsilon)\delta<1$. Define for some fixed $c>0$:
\begin{equation*}
\begin{aligned}
 \sigma & \define \inf\{t\in [k\delta, k\delta + \varepsilon \delta) \mid (B^k\ti + X_{k\delta})^p = 
 (X^*_{\tau_{k-1}})^p+ c \} \\
 & = \inf\{t\in [k\delta, k\delta + \varepsilon \delta) \mid B^k\ti =  ((X^*_{\tau_{k-1}})^p+ c)^{1/p}  -  X_{k\delta} \},\\
 \end{aligned}
 \end{equation*}
setting $\inf \emptyset = k\delta + \varepsilon\delta$. Due to $X_{k\delta} = \gamma X_{\tau_{k-1}}^*$ we have  $(\Xvdkln+ c)^{1/p} - X_{k\delta}  \geq 0$. By the definition of $\sigma$ we have
\begin{equation*}
\{ \Xvdkn >  \Xvdkln +c\}  = \{\sigma < \tau_k \}.
\end{equation*}
By the independence of the Brownian motions $\tilde{B}_k, k\in\N_0$ we have:
\begin{equation*}
 \PP[\Xvdkn \geq  \Xvdkln +c \mid \F_{k\delta} ]  = \PP[\sigma < \tau_k \mid  \F_{k\delta} ] = 
 \frac{X^{\varepsilon,\delta}_{k\delta}}{( \Xvdkln+ c)^{1/p}},
 \end{equation*}
as $\PP[\sigma < \tau_k \mid \hat{\F}_{k-1}]$ is the conditional probability that the Brownian motion $B^k$ hits  $((X_{\tau_{k-1}})^{*,p}+ c)^{1/p} - X_{k\delta}$ before $-X_{k\delta}$. Applying the previous equation gives:
\begin{equation*}
\begin{aligned}
\E[ \Xvdkn] 
&= 
\E\bigg[ \Xvdkln +  \int_0^{(\Xvdkn- \Xvdkln)\vee 0} \dd u \bigg] \\
&= 
\E\bigg[ \Xvdkln  + \int_0^\infty \PP[ \Xvdkn \geq \Xvdkln + u \mid \F_{k-1}] \dd u\bigg] \\
& = 
\E[\Xvdkln]  + \E\bigg[\int_0^\infty  \frac{X_{k\delta} }{(\Xvdkln + u)^{1/p}} \dd u \bigg]\\
& = 
\E[\Xvdkln]  + \frac{p}{1-p}  \E[X_{k\delta} ((X_{\tau_{k-1}})^*)^{-1+p}].
\end{aligned}
\end{equation*}
Applying that $X_{k\delta} = \gamma X^*_{\tau_{k-1}}$ implies 
\begin{equation}\label{eq:proofsharpness}
\E[ \Xvdkn] = (1+ p(1-p)^{-1}\gamma) \E[\Xvdkln].
\end{equation}
Noting that $\E[(X_{\tau_0})^{*,p}] = \E[\sup_{s\in[0,\tau_0]}(1+B_s^0)^p] = \frac{1}{1-p}$ and iterating \eqref{eq:proofsharpness} yields:
\begin{equation*}
\E[ \Xvdkn] = (1-p)^{-1} \big(1 + p(1-p)^{-1}\gamma\big)^k.
\end{equation*}
\end{proof}

\begin{proof}[Proof of \cref{thm:sharpnessBeta}]
For $\varepsilon, \delta \in (0,1)$ let $(X^{\varepsilon,\delta}_t)_{t\geq 0}$ denote the process from \cref{lemma:pathdependentSDE}. By \cref{lemma:pathdependentSDE} the process $(X^{\varepsilon,\delta})_{t\geq 0}$ is non-negative, adapted and continuous. Moreover, it satisfies
\begin{equation*}
X^{\varepsilon,\delta}_t \leq \int_0^t (X^{\varepsilon,\delta})_s^* \dd s + M^{\varepsilon,\delta}_t + 1 \qquad \forall t\geq 0
\end{equation*}
for $M^{\varepsilon,\delta}_t \define \int_0^t \sigma(s, X^{\varepsilon,\delta}) \dd W_s$, which is a continuous local martingale starting in $0$. Moreover, by \cref{lemma:pathdependentSDE} we have
\begin{equation}\label{eq:proof-beta-sharpness-2}
\E[ ((X^{\varepsilon, \delta})^*_{k\delta + \varepsilon\delta})^p] = \beta \big(1 + p\beta\gamma\big)^k.
\end{equation}
Due to $X^{\varepsilon,\delta}$, $M^{\varepsilon,\delta}$ and $H=1$ satisfying the assumptions of \cref{thm:sharpnessBeta}, the inequality 
\begin{equation*}
\|(X^{\varepsilon,\delta}_t)^*\|_p \leq \tilde{\alpha} H \exp(\tilde{\beta} t)
\end{equation*}
holds true by assumption \eqref{eq:beta-sharpness-assumption}. Rearranging the inequality  and choosing $t=(k+1)\delta$ implies for all $k\in\N$:
\begin{equation*}
\tilde{\beta} \geq ((k+1)\delta)^{-1} \log(\|(X^{\varepsilon, \delta}_{(k+1)\delta})^*\|_p) - ((k+1)\delta)^{-1} \log(\tilde{\alpha}).
\end{equation*}
By inserting \eqref{eq:proof-beta-sharpness-2}, we have for all $k\in\N$:
\begin{equation*}
\tilde{\beta} \geq  \frac{k}{k+1}p^{-1} \log\big\{\big( 1+ p\beta\gamma\big)^{1/\delta} \big\} + ((k+1)\delta)^{-1}\log(\beta^{1/p}) - ((k+1)\delta)^{-1} \log(\tilde{\alpha}),
\end{equation*}
i.e. taking the limits $k\nearrow\infty$, $\varepsilon \searrow 0$, $\delta \searrow 0$ gives
\begin{equation*}
\tilde{\beta} 
\geq  
\lim_{\delta \searrow 0}\lim_{\varepsilon \searrow 0}\frac{1}{p\delta}\log\big( 1 + \beta p\delta(1-\varepsilon)\big) 
=
\lim_{\delta \searrow 0} \frac{1}{p\delta}\log\big( 1 + \beta (p\delta) \big) 
=
\frac{\partial}{\partial x} \log(1 + \beta x) \bigg|_{x=0}  = \beta
\end{equation*}
which implies the assertion.
\end{proof}

\begin{proof}[Proof of \cref{thm:sharp-tail}]
Fix some $\varepsilon, \delta \in (0,1)$ and $k\in\N$ such that $T = k\delta + \varepsilon\delta$. Let $(X_t)_{t\geq 0}$ be the process from \cref{lemma:pathdependentSDE}, which satisfies \eqref{eq:beta-sharpness-assumption0-1} 
for $M_t \define \int_0^t \sigma(s,X)\dd W_s$. We have 
\begin{equation*}
\sup_{p\in(0,1)} \big((1-p) \E[(X^*_T)^p]\big)
= \sup_{p\in(0,1)} \left(1+\frac{p}{1-p} (1-\varepsilon)\delta\right)^{k} = \infty.
\end{equation*}
Assume that there exists a $0<C<\infty$ such that $\PP[X^*_T>u] \leq C\frac{1}{u}$ for all $u>0$. Then e.g. by \eqref{eq:formulaZ} we have
\begin{equation*}
\E[(X^*_T)^p] = 1^p +  p \int_1^\infty \PP[X_t>u] u^{p-1} \dd u \leq  1 + p C\int_1^\infty u^{p-2} \dd u
 = 1 + C \frac{p}{1-p}  
\end{equation*}
which implies 
\begin{equation*}
\sup_{p\in(0,1)} \left((1-p) \E[(X^*_T)^p]\right) \leq \sup_{p\in(0,1)}( (1-p) + Cp) \leq 1+C <\infty,
\end{equation*}
which is a contradiction. This proves $\sup_{u>0} \left(u\PP[X^*_T>u]\right)=\infty$.
\end{proof}

\section{Special case: Sharp stochastic Gronwall inequalities}\label{sec:Gronwall}
In this section we summarize the results in the literature for the linear case $\eta(x) =x$ and compare them to the inequalities of this paper. \\

Von Renesse and Scheutzow \cite[Lemma 5.4]{RenesseScheutzow} developed a stochastic Gronwall inequality for continuous martingales to study stochastic functional differential equations. This result was further generalized by Mehri and Scheutzow \cite[Theorem 2.1]{MehriScheutzow}, who  applied Lenglart's domination inequality in the proof.

\begin{theorem}[Mehri and Scheutzow: A stochastic Gronwall inequality for $\Asup$]
Let \nameref{def:sup} hold and assume that $A$ is deterministic and $\eta(x) \equiv x$. Then, the following estimates hold for $p\in(0,1)$ and $T>0$
\begin{equation*}
\|X_T^*\|\pF \leq
\begin{cases}
 p^{-1/p} c_p \|H_T\|\pF \e^{p^{-1}c_p A_T} & 
 \text{ if }  \E[H_T^p] < \infty \text{ and }  H \text{ is predictable,} \\
p^{-1/p} (c_p^p + 1)^{1/p} \|H_T\|\pF \e^{p^{-1}(c_p^p +1)^{1/p} A_T} & 
\text{ if } \E[H_T^p] < \infty \text{ and } \Delta M \geq 0, \\
p^{-1/p} c_p \|H_T\|_{1,\F_0} \e^{p^{-1}c_p A_T} & 
\text{ if }  \E[H_T] < \infty,
\end{cases}
\end{equation*}
where $c_p = \alpha_1 \alpha_2 =  (1-p)^{-1/p} p^{-1}$.
\end{theorem}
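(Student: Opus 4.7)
The plan is to combine the Lenglart-type estimate of \cref{lemma:1} (applied with $\eta(x) \equiv x$) with the deterministic Gronwall inequality, which is the linear case of \cref{lemma:DetBihari}.

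Fix $p \in (0,1)$ and let $\lambda > 0$ be an auxiliary parameter to be chosen later (differently in each of the three cases). By a standard localization argument we may assume that $M$ is a true martingale. \cref{lemma:1} then yields, for every $u > 0$ and $t \in [0,T]$,
\begin{equation*}
\one_{\{X_t^* > u\}}\, u \;\leq\; \int_{(0,t]} X^*_{s^-} \one_{\{X^*_{s^-} \leq u\}} \,\dd A_s \;+\; M^{L,u}_t \;+\; H^{L,u}_t,
\end{equation*}
where $M^{L,u}$ is a martingale with $M^{L,u}_0 = 0$ and $H^{L,u}$ is the non-decreasing dominator of \cref{lemma:1}, depending on which of the three hypotheses is active.

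Next, multiply the inequality by $p u^{p-2}$, integrate in $u$ over $(0,\infty)$, take conditional expectation given $\F_0$, and apply Fubini. The identities in \cref{rmk:formulaZ} produce $\E[(X_t^*)^p \mid \F_0]$ on the left. In the integral term, the inner integral evaluates to $\int_{X^*_{s^-}}^\infty p u^{p-2} \, \dd u = \tfrac{p}{1-p} (X^*_{s^-})^{p-1}$, which combines with the factor $X^*_{s^-}$ to produce $\tfrac{p}{1-p} \E[(X^*_{s^-})^p \mid \F_0]$. The martingale term vanishes, and the $H^{L,u}$ term is treated exactly as in the proof of \cref{lemma:2}, yielding an expression of the form $C_\lambda \, \bar H_T$, where $\bar H_T = \E[H_T^p \mid \F_0]$ in the two ``$p$-moment'' cases and $\bar H_T = \E[H_T \mid \F_0]^p$ in the $\E[H_T] < \infty$ case. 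Since $A$ is deterministic, setting $f(t) \define \E[(X_t^*)^p \mid \F_0]$ we arrive at
\begin{equation*}
f(t) \;\leq\; \frac{p}{1-p} \int_{(0,t]} f(s^-) \, \dd A_s \;+\; C_\lambda \, \bar H_T \qquad \forall t \in [0,T].
\end{equation*}

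Applying the deterministic Gronwall inequality then gives $f(T) \leq C_\lambda \bar H_T \exp\bigl(\tfrac{p}{1-p} A_T\bigr)$, and taking $p$-th roots produces an exponential estimate of exactly the stated shape. The specific constants $c_1, c_2$ in each of the three cases are obtained by fixing $\lambda$ according to the choice made by Mehri and Scheutzow; in particular, the factor $(c_p^p + 1)^{1/p}$ in the $\Delta M \geq 0$ case reflects a slightly different bound on $H^{L,u}$ than in the predictable case. The main, quite minor, obstacle is the careful bookkeeping of the $\lambda$-dependent prefactor in the $H^{L,u}$ term in order to reproduce exactly the constants of \cite{MehriScheutzow}; optimizing $\lambda$ instead yields the sharp constants $\alpha_1 \alpha_2$ and $\beta = (1-p)^{-1}$ of the ensuing \cref{cor:GronwallSup}.
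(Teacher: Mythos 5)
The paper does not prove this theorem: it is quoted verbatim from Mehri and Scheutzow \cite[Theorem 2.1]{MehriScheutzow} as a benchmark against the paper's sharper \cref{cor:GronwallSup}. Your proposal is, in substance, a proof of that sharper corollary, and since \cref{cor:GronwallSup} strictly dominates the stated theorem (the exponent $\beta = (1-p)^{-1}$ is smaller than $p^{-1}c_p$ for every $p\in(0,1)$), the stated bound does follow. Your final paragraph, however, contains a genuine error: no choice of $\lambda$ in \cref{lemma:1} can reproduce the Mehri--Scheutzow constants. The parameter $\lambda$ enters only through $H^{L,u}$ and hence only through the constant prefactor $C_\lambda$; the exponent $\e^{\beta A_T}$ comes from the Gronwall step via the factor $\tfrac{p}{1-p}$, produced by the $u$-integration of $\one_{\{X^*_{s^-}\leq u\}}$, and is entirely independent of $\lambda$. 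Likewise, \cref{lemma:1} returns the \emph{same} $H^{L,u}$ in the predictable-$H$ and the $\Delta M\geq 0$ cases, so your route cannot produce the $(c_p^p+1)^{1/p}$ factor either; that constant is an artifact of Mehri--Scheutzow's argument, which applies Lenglart's domination inequality (with its $\alpha_1\alpha_2$ loss) \emph{before} the Gronwall step, whereas you integrate the pointwise estimate first and then run Gronwall on $t\mapsto \EO[(X^*_t)^p]$. Two further remarks: to run that Gronwall iteration you must first justify that $t\mapsto\EO[(X^*_t)^p]$ is finite and locally bounded (e.g.\ by exploiting that $X_{t\wedge\sigma_u}\wedge u$ in \cref{lemma:1} is bounded by $u$, and passing to a limit), and since $A$ is deterministic c\`adl\`ag but not assumed absolutely continuous, you should invoke a Gronwall lemma for Stieltjes integrators rather than \cref{lemma:DetBihari} as stated. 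Once you drop the claim of reproducing the literature constants and present this as a proof of \cref{cor:GronwallSup}, the outline is sound.
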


The following is a corollary of \cref{thm:stochBihariConvex} \ref{item:convexSup}, \cref{thm:sharpnessBeta} and \cref{thm:sharpnessAlpha}. It slightly sharpens the result above and extends it to predictable integrators $A$:
\begin{corollary}[A sharp stochastic Gronwall inequality for $\Asup$]\label{cor:GronwallSup}
Let \nameref{def:sup} hold and assume $\eta(x)\equiv x$ and $p\in(0,1)$.  
Then, the following estimates hold for all $T>0$.
\begin{equation*}
\|\e^{-\beta A_T}X^*_T\|\pF  \leq
\begin{cases}
\alpha_1  \alpha_1 \|H_T\|_{p,\F_0} 
& \text{if }  \E[H_T^p] < \infty \text{ and }  H \text{predictable,} \\
\alpha_1 \alpha_2 \|H_T\|_{p,\F_0} 
&  \text{if } \E[H_T^p] < \infty \text{ and } \Delta M \geq 0, \\
\alpha_1 \|H_T\|_{1,\F_0}   
& \text{if } \E[H_T] < \infty. \\
\end{cases}
\end{equation*}
The constants $\alpha_1 = (1-p)^{-1/p}, \alpha_1 \alpha_2 =(1-p)^{-1/p} p^{-1}$ and $\beta = (1-p)^{-1}$ are sharp. If $\|e^{\beta A_T}\|_{qp/(p-q),\F_0}$ is integrable, we have for $0<q<p<1$ and all $T\geq 0$
\begin{equation*}
\|X^*_T\|\qF  \leq
\begin{cases}
\alpha_1  \alpha_1 \|H_T\|_{p,\F_0} \|e^{\beta A_T}\|_{qp/(p-q),\F_0}
& \text{if }  \E[H_T^p] < \infty \text{ and }  H \text{predictable,}\\
\alpha_1 \alpha_2 \|H_T\|_{p,\F_0} \|e^{\beta A_T}\|_{qp/(p-q),\F_0}
&  \text{if } \E[H_T^p] < \infty \text{ and } \Delta M \geq 0, \\
\alpha_1 \|H_T\|_{1,\F_0}\|e^{\beta A_T}\|_{qp/(p-q),\F_0}
& \text{if } \E[H_T] < \infty.
\end{cases}
\end{equation*}
\end{corollary}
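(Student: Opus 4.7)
The proof is a direct specialization of \cref{thm:stochBihariConvex}\ref{item:convexSup} to the linear case $\eta(x) = x$, combined with H\"older's inequality for the weighted $L^q$ bounds, plus appeal to the sharpness theorems established earlier in this section.

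First I would check that $\eta(x) = x$ satisfies the hypotheses of \cref{thm:stochBihariConvex}\ref{item:convexSup}. Taking $c_0 = 0$, the associated function is $\eta_p(x) = \tfrac{p}{1-p}\eta(x^{1/p})x^{1-1/p} = \tfrac{p}{1-p}\,x$, which is linear, hence convex and $C^1$, with $\eta_p(0)=0$. Choosing $c = 1$ in the definition \eqref{eq:defG} of $G$ yields $G(x) = \log x$ on $(0,\infty)$ and $G^{-1}(y) = \e^{y}$, so that
\[
G^{-1}\bigl(G(X^*_T) - \beta A_T\bigr) \;=\; \e^{-\beta A_T}\,X^*_T.
\]
Substituting this identity into \cref{thm:stochBihariConvex}\ref{item:convexSup} directly produces all three cases of the first displayed estimate.

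For the weighted $L^q$ bounds with $0 < q < p < 1$, I would factor $X^*_T = (\e^{-\beta A_T}X^*_T)\cdot \e^{\beta A_T}$ and apply the conditional H\"older inequality with exponents $a = p/q$ and $b = p/(p-q)$, so that $qa = p$ and $qb = qp/(p-q)$. This yields
\[
\|X^*_T\|\qF \;\leq\; \|\e^{-\beta A_T}X^*_T\|\pF \cdot \|\e^{\beta A_T}\|_{qp/(p-q),\F_0},
\]
and plugging in the three bounds for $\|\e^{-\beta A_T}X^*_T\|\pF$ from the first part completes the $L^q$ estimates.

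For sharpness, I would invoke \cref{thm:sharpnessAlpha} to obtain that $\alpha_1\alpha_2$ and $\alpha_1$ are sharp already in the subcase $A \equiv 0$ (a special case of $\Asup$ with $\eta(x) \equiv x$), and \cref{thm:sharpnessBeta} to obtain that $\beta = (1-p)^{-1}$ is sharp even when $\eta(x) \equiv x$, $A_t \equiv t$, $H$ is a constant, and $X$, $M$ are continuous. Since each sharpness statement has been proven independently and the upper bounds are specializations of machinery already in place, there is no substantive obstacle here: the only routine verification is that the H\"older step uses the correct conjugate exponents and that its conditional version applies almost surely.
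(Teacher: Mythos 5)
Your proof is correct and follows exactly the route the paper intends — the paper gives no explicit proof of this corollary beyond remarking that it follows from \cref{thm:stochBihariConvex}~\ref{item:convexSup}, \cref{thm:sharpnessBeta} and \cref{thm:sharpnessAlpha}, and your specialization $\eta(x)=x$, $c=1$, $G=\log$, $G^{-1}=\exp$ together with the conditional H\"older factoring $X^*_T = (\e^{-\beta A_T}X^*_T)\cdot\e^{\beta A_T}$ with exponents $p/q$ and $p/(p-q)$ is precisely that reconstruction. One small observation: the displayed constant ``$\alpha_1\alpha_1$'' in the $H$-predictable case of the corollary is evidently a typo for $\alpha_1\alpha_2$ (this is what \cref{thm:stochBihariConvex}~\ref{item:convexSup}, \cref{table:Gronwall} and the sharpness statement \cref{thm:sharpnessAlpha}~\ref{item:sharpnessAlpha1} all give), and your argument correctly yields $\alpha_1\alpha_2$.
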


In Assumption $\Asup$ it is assumed that $A$ is predictable. For an example that this assumption cannot be dropped see \cref{example:notpredictable}.

A stochastic Gronwall lemma (in a setting nearly identical to $\Anosup$ with $\eta(x) \equiv x$) was proven for continuous martingales $M$ by Scheutzow  \cite[Theorem 4]{Scheutzow}. This result was extended by Xie and Zhang \cite[Lemma 3.7]{XieZhang} to c\`adl\`ag martingales:

\begin{theorem}[Xie and Zhang: A stochastic Gronwall inequality for $\Anosup$]\label{thm:XieZhang}
Let Assumption $\Anosup$ hold. Furthermore, assume that $\eta(x) \equiv x$ and that $A$ is continuous.
Then, for any $0<q<\tilde{p}<1$ and $t\geq 0$, we have:
\begin{equation*}
\| X^*\ti\|_q \leq \bigg(\frac{\tilde{p}}{\tilde{p}-q}\bigg)^{1/q} \|H\ti\|_{1} \, \|\e^{A\ti}\|_{\tilde{p}/(1-\tilde{p})}.
\end{equation*}
\end{theorem}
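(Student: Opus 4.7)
The plan is to deduce this Xie--Zhang type estimate as a corollary of the convex stochastic Bihari--LaSalle inequality of the present paper, Theorem~\ref{thm:stochBihariConvex}\,\ref{item:convexNoSup}, by specializing to the linear case $\eta(x) \equiv x$ and then decoupling the $A_T$-dependent factor via H\"older's inequality.

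First, specialize $\eta(x) \equiv x$ and take $c = 1$ in the definition of $G$, giving $G(x) = \log x$ and $G^{-1}(y) = \e^y$. Since $\eta$ is convex and $C^1$ with $\eta(c_0) = 0$ for $c_0 = 0$, the hypotheses of Theorem~\ref{thm:stochBihariConvex}\,\ref{item:convexNoSup} hold. Observing that $G^{-1}(G(X\ti) - A\ti) = X\ti \e^{-A\ti}$, the third case of that theorem (which requires only $\E[H_T] < \infty$) gives, for every $\tilde{p} \in (q, 1)$,
\[
\Bigl\|\sup_{t\in[0,T]} X\ti \e^{-A\ti}\Bigr\|_{\tilde{p}} \leq \alpha_1 \|H_T\|_1,
\]
where $\alpha_1 = (1-\tilde{p})^{-1/\tilde{p}}$.

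Second, the pointwise bound $X_T^* \leq \bigl(\sup_{t\in[0,T]} X\ti \e^{-A\ti}\bigr)\, \e^{A_T}$ combined with H\"older's inequality on $q$-th powers, using the conjugate pair $\tilde{p}/q$ and $\tilde{p}/(\tilde{p}-q)$, yields
\[
\|X_T^*\|_q \leq \Bigl\|\sup_{t\in[0,T]} X\ti \e^{-A\ti}\Bigr\|_{\tilde{p}} \cdot \|\e^{A_T}\|_{q\tilde{p}/(\tilde{p}-q)}.
\]
Combining the two displays produces an estimate of the stated form.

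The only subtle step is tracking the explicit prefactor: the approach above delivers the multiplicative constant $(1-\tilde{p})^{-1/\tilde{p}}$, which is the improved constant reported in Corollary~\ref{cor:GronwallNoSup} and in Table~\ref{table:Gronwall}, rather than Xie--Zhang's value $(\tilde{p}/(\tilde{p}-q))^{1/q}$. To recover exactly the latter one should avoid going through Theorem~\ref{thm:stochBihariConvex} and instead start from Lenglart's weak-$L^1$ estimate~\eqref{eq:LenglartProof1} applied to the dominated pair built from $(X\ti \e^{-A\ti})$, integrate it against $q u^{q-1}\,\dd u$, and only then use H\"older to separate the $\e^{A_T}$ factor, optimizing the free parameter $\lambda$ in \eqref{eq:LenglartProof1} as a function of $q$ and $\tilde{p}$; this optimization is the only non-routine ingredient and is precisely what generates the constant $(\tilde{p}/(\tilde{p}-q))^{1/q}$.
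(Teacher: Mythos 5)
The paper itself does not supply a proof of this theorem; it is cited as Xie and Zhang's result \cite[Lemma 3.7]{XieZhang}, and the paper instead proves the sharper \cref{cor:GronwallNoSup}, to which your argument is (implicitly) the intended route. Your two steps --- applying \cref{thm:stochBihariConvex}\,\ref{item:convexNoSup} with $\eta(x)\equiv x$ so that $G^{-1}(G(X_t)-A_t)=X_t\e^{-A_t}$, then H\"older with exponents $\tilde p/q$ and $\tilde p/(\tilde p-q)$ after the pointwise bound $X^*_T\leq \bigl(\sup_{t}X_t\e^{-A_t}\bigr)\e^{A_T}$ --- are exactly how \cref{cor:GronwallNoSup} is obtained, and they are correct.

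The loose end is how you close the gap back to the literal Xie--Zhang statement. You produce
\[
\|X^*_T\|_q \leq (1-\tilde p)^{-1/\tilde p}\,\|H_T\|_1\,\|\e^{A_T}\|_{q\tilde p/(\tilde p-q)},
\]
which has a different constant and a different exponent on the $\e^{A_T}$-norm than the target. You then suggest that recovering Xie--Zhang's constant requires abandoning \cref{thm:stochBihariConvex} and optimizing $\lambda$ in Lenglart's inequality directly. That detour is unnecessary. The clean way to finish, and the one the paper records in the remark following \cref{cor:GronwallNoSup}, is a reparametrization: given $0<q<\tilde p<1$, set $p$ by $qp/(p-q)=\tilde p/(1-\tilde p)$, i.e.\ $p=q\tilde p/(\tilde p(1+q)-q)\in(q,1)$. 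Then the $\e^{A}$-norms agree, and one checks that $(1-p)^{-1/p}\leq \bigl(\tilde p/(\tilde p-q)\bigr)^{1/q}$ (the paper's Remark~\ref{thm:XieZhang}/Corollary~\ref{cor:GronwallNoSup} comparison), so your bound dominates the target. State that comparison explicitly and the proof of \cref{thm:XieZhang} as written is complete; no separate Lenglart optimization is needed.
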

The following corollary of \cref{thm:stochBihariConvex} and \cref{thm:sharpnessAlpha} slightly extends and marginally sharpens \cite[Lemma 3.7]{XieZhang}. Recall that \cref{thm:stochBihariConvex} \ref{item:convexNoSup} was proven by further developing the proof idea of  \cite[Lemma 3.7]{XieZhang}.

\begin{corollary}[A sharp stochastic Gronwall inequality for $\Anosup$]\label{cor:GronwallNoSup}

Let Assumption $\Anosup$ (see \cref{def:nosup}) hold and assume $\eta(x)\equiv x$ and $p\in(0,1)$. Then, the following estimates hold.

\begin{enumerate}
\item ($L^p$ estimates, $p\in(0,1)$) 
 For all $T>0$ we have:
\begin{equation*}
\|\e^{-A\ti}X^*_T\|\pF  \leq
\begin{cases}
\alpha_1  \alpha_1 \|H_T\|_{p,\F_0} 
& \text{if }  \E[H_T^p] < \infty \text{ and }  H\text{ predictable,} \\
\alpha_1 \alpha_2 \|H_T\|_{p,\F_0} 
&  \text{if } \E[H_T^p] < \infty \text{ and } \Delta M \geq 0, \\
\alpha_1 \|H_T\|_{1,\F_0}   
& \text{if } \E[H_T] < \infty. 
\end{cases}
\end{equation*}
The constants $\alpha_1 = (1-p)^{-1/p}$ and $\alpha_1 \alpha_2 =(1-p)^{-1/p} p^{-1}$ are sharp. If $\|e^{\beta A\ti}\|_{qp/(p-q),\F_0}$ is integrable, we have for $0<q<p<1$
\begin{equation*}
\|X^*_T\|\qF  \leq
\begin{cases}
\alpha_1  \alpha_1 \|H_T\|_{p,\F_0} \|e^{A_T}\|_{qp/(p-q),\F_0}
& \text{if }  \E[H_T^p] < \infty \text{ and }  H\text{ predictable,} \\
\alpha_1 \alpha_2 \|H_T\|_{p,\F_0} \|e^{A_T}\|_{qp/(p-q),\F_0}
&  \text{if } \E[H_T^p] < \infty \text{ and } \Delta M \geq 0, \\
\alpha_1 \|H_T\|_{1,\F_0}\|e^{ A_T}\|_{qp/(p-q),\F_0}
& \text{if } \E[H_T] < \infty. \\
\end{cases}
\end{equation*}

\item ($L^{1,w}$ estimates)
We have for all $T>0, u>0, w>0$ and $R>0$
\begin{equation}\label{eq:Mark2}
\begin{aligned}
\PP[\e^{-A_T} X^*_T > u \mid \F_0 ] 
& \leq \begin{cases}
 \frac{1}{u}\EO[H_T \wedge (\lambda u) ] + \PP[H_T\geq \lambda u\mid \F_0] & \text{if } H \text{ is predictable,} \\
  \frac{1}{u}\EO[H_T \wedge (\lambda u) ] + \PP[H_T\geq \lambda u \mid \F_0] & \text{if } \Delta M \geq 0, \\
   \frac{1}{u}\EO[H_T]\wedge u & \text{if } \E[H_T] < \infty,
\end{cases}
\end{aligned}
\end{equation}
and 
\begin{equation}\label{eq:Mark2-r}
\begin{aligned}
\PP[X^*_T > u \mid \F_0] 
& \leq \begin{cases}
 \frac{\e^{R}}{u}\EO[H_T \wedge w ] + \PP[H_T\geq w\mid \F_0] 
+ \PP[A_T > R\mid \F_0] 
& \text{if } H \text{ is predictable,} \\
  \frac{\e^{R}}{u}\EO[H_T \wedge w ] + \PP[H_T\geq w \mid \F_0] + \PP[A_T> R\mid \F_0]  & \text{if } \Delta M \geq 0, \\
   \left(\frac{\e^{R}}{u}\EO[H_T]\right)\wedge 1 + \PP[A_T > R\mid\F_0]  & \text{if } \E[H_T] < \infty.
\end{cases}
\end{aligned}
\end{equation}
\end{enumerate}
\end{corollary}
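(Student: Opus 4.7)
The plan is to specialize \cref{thm:stochBihariConvex} \ref{item:convexNoSup} to the linear case $\eta(x) \equiv x$; once the expressions $G$ and $G^{-1}$ are computed explicitly, both parts (a) and (b) reduce to a direct substitution, a short H\"older argument for the $L^q$ bounds, and a citation of \cref{thm:sharpnessAlpha} for the sharpness.

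First I would take $c = 1$ in the definition \eqref{eq:defG} of $G$, so that $G(x) = \log x$ on $(0,\infty)$ and $G^{-1}(G(x) - a) = x\e^{-a}$ for every $x > 0$ and $a \in \R$. Substituting into the first inequality of \eqref{eq:orignal-estimate} and using that $A$ is non-decreasing (so $\e^{-A_T} X_T^* \leq \sup_{t \in [0,T]} \e^{-A_t} X_t$), the three cases of \cref{thm:stochBihariConvex} \ref{item:convexNoSup} immediately yield the three $L^p$ bounds displayed in part (a). The weak-type estimate \eqref{eq:Mark2} follows in the same way from \eqref{eq:Mark1} after reparametrising $w = \lambda u$, while \eqref{eq:Mark2-r} is obtained either by direct substitution into \eqref{eq:Mark1-r} (using $G^{-1}(G(u) - R) = u\e^{-R}$) or, equivalently, by splitting $\{X_T^* > u\} \subseteq \{\e^{-A_T} X_T^* > u\e^{-R}\} \cup \{A_T > R\}$ and applying \eqref{eq:Mark2} with $u$ replaced by $u\e^{-R}$.

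For the $L^q$ estimates with $0 < q < p < 1$, I would apply H\"older's inequality with conjugate exponents $p/q$ and $p/(p - q)$ to the factorisation $X_T^* = (\e^{-A_T} X_T^*)\cdot \e^{A_T}$, obtaining
\begin{equation*}
\|X_T^*\|_{q,\F_0} \leq \|\e^{-A_T} X_T^*\|_{p,\F_0}\,\|\e^{A_T}\|_{qp/(p-q),\F_0},
\end{equation*}
and then insert the three $L^p$ bounds established above. Finally, the sharpness of $\alpha_1$ and $\alpha_1\alpha_2$ is essentially free: setting $A \equiv 0$ reduces $\Anosup$ with $\eta(x) \equiv x$ to the pointwise inequality $X_t \leq M_t + H_t$, which is precisely the hypothesis of \cref{thm:sharpnessAlpha}, and parts \ref{item:sharpnessAlpha1}--\ref{item:sharpnessAlpha3} of that theorem provide matching examples already in the continuous setting. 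I do not anticipate any real obstacle; the only verification requiring a moment of thought is that the left-hand side $\|\e^{-A_T} X_T^*\|_{p,\F_0}$ in part (a) is indeed controlled by the right-hand side of \eqref{eq:orignal-estimate}, which follows from the monotonicity of $A$ as noted above.
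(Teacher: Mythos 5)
Your argument is correct and is precisely the specialization the paper intends by labeling this result a corollary of \cref{thm:stochBihariConvex}~\ref{item:convexNoSup}: taking $c=1$ so that $G(x)=\log x$ and $G^{-1}(G(x)-a)=x\e^{-a}$, reading off the $L^p$ bounds from \eqref{eq:orignal-estimate} and the weak estimates from \eqref{eq:Mark1}--\eqref{eq:Mark1-r} with $w=\lambda u$, the conditional H\"older step with exponents $p/q$ and $p/(p-q)$, and citing \cref{thm:sharpnessAlpha} for sharpness (valid since $A\equiv 0$ makes $\Anosup$ coincide with $X_t\le M_t+H_t$) all match. One small remark: your derivation naturally yields the constant $\alpha_1\alpha_2$ in the first (predictable $H$) case, whereas the displayed corollary reads $\alpha_1\alpha_1$; this is a typo in the statement, as confirmed by \cref{thm:stochBihariConvex}~\ref{item:convexNoSup}, Table~\ref{table:Gronwall}, and the sharpness sentence immediately following the display.
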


\begin{remark}
\cref{thm:sharp-tail} shows that under $\Asup$ the $L^{1,w}$ norm may be infinite, hence the estimates \cref{eq:Mark2} and \cref{eq:Mark2-r} do not hold under the weaker assumption $\Asup$.
\end{remark}

\begin{remark}[Comparison of constants]
Choose any $0 < q < p < 1$ and set $\tilde{p} \define qp(qp+p-q)^{-1}$. Then, $q<\tilde{p}<1$ holds and \cref{thm:XieZhang} implies:
\begin{equation*}
\| X^*\ti\|_q \leq \bigg(\frac{p}{q}\frac{1}{1-p}\bigg)^{1/q}  \|H\ti\|_{1} \|\e^{A\ti} \|_{qp/(p-q)} .
\end{equation*}
Noting that, due to $0<q<p<1$ we have
\begin{equation*}
\bigg(\frac{p}{q}\frac{1}{1-p}\bigg)^{1/q}  \geq \bigg(\frac{1}{1-p}\bigg)^{1/q} \geq \bigg(\frac{1}{1-p}\bigg)^{1/p},
\end{equation*}
the constant in \cref{cor:GronwallNoSup} is slightly sharper than that in \cref{thm:XieZhang}.
However, for deterministic $A$, \cref{thm:XieZhang} yields the sharp constant: The choice of the norm $\|\e^{A\ti}\|_{\tilde{p}/(1-\tilde{p})}=\e^{A\ti}$ has no effect, so we may take the limit $\tilde{p} \to 1$ implying the (optimal) constant  $(1-q)^{-1/q}$.
\end{remark}

\begin{remark}
Under assumption $\Anosup$, random $A$, deterministic $H$ and $p\in(0,1)$ there exists no finite constant $\tilde{c}$ such that 
\begin{equation*}
\|X^*_T\|_p \leq \tilde{c} \|H \e^{A_T}\|_{p}
\end{equation*}
holds. For a counterexample, see \cite[Example 6.1]{GeissConcave}.
\end{remark}



\section{Application: Path-dependent SDEs}
\label{sec:applications}

The convex stochastic Bihari-LaSalle inequalities are, like the stochastic Gronwall inequalities, useful tools to study SDEs. We provide in the following two applications of Section \ref{sec:MainResults} to path-dependent SDEs driven by Brownian motion. Alternatively also more general  (e.g. Levy-driven) path-dependent SDEs could be studied by the same approach.

For solutions of non-path-dependent SDEs, exponential integrability bounds are known, see e.g. Cox, Hutzenthaler and Jentzen \cite{CoxHutzenthalerJentzen}, Hudde, Hutzenthaler and Mazzonetto \cite{Hudde} and the references therein. As explained in the introduction, these proofs do not extend to the case of path-dependent SDEs, because terms fail to cancel out in the path-dependent case. Using the stochastic Bihari-LaSalle inequality for $\eta(x) = x(\log(x) +c)$ we provide a similar result for path-dependent SDEs.

The second application is connected to tail estimates of path-dependent SDEs: We obtain as a corollary of \cref{lemma:pathdependentSDE}  (which is the key lemma to show the  sharpness of the constant $\beta$) that path-dependent SDEs may have tails of a different order than than non-path-dependent SDEs.

\bigskip

Assume an underlying filtered probability space $(\Omega, \F,\PP,(\F_t)_{t\geq 0})$ satisfying the usual conditions.  Let $B$ be an $m$-dimensional Brownian motion and let $| \cdot|_F$ denote the Frobenius norm on $\R^{d\times m}$. We study the path-dependent SDE with random coefficients driven by the Brownian motion $B$:
\begin{equation}\label{eq:SDE}
\begin{cases}
d X_t & = f(t,X) \dd t + g(t, X) \dd B\ti \\
X_t & = z_t, \qquad t\in[-r, 0],
\end{cases}
\end{equation}
where $r>0$ is some constant and the initial condition $(z_t)_{t\in[-r,0]}$  has continuous paths and is $\F_0$ measurable. Denote by $\pred$ the predictable $\sigma$-field on $[0,\infty)\times\Omega$. Let $\B(\cadlag))$ denote the Borel $\sigma$-field on the continuous functions $\cadlag$ induced by convergence in the uniform norm on compacts sets. Assume that the coefficients
\begin{equation*}
\begin{aligned}
& f:([0,\infty)\times  \Omega \times \cadlag, \pred \otimes \B(\cadlag)) \to (\R^d,\B(\R^d)), \\
& g:( [0,\infty) \times \Omega \times\cadlag, \pred \otimes \B(\cadlag)) \to (\R^{d\times m},\B(\R^{d\times m})) \\
\end{aligned}
\end{equation*}
are measurable mappings. For every $t\in[0,\infty)$, $\omega\in\Omega$, $\xi\in U$ assume that $f(t, \omega, x)$ and $g(t, \omega, x)$ only depend on the  path segment $x(s),s\in[-r, t]$. We denote by $f(t,x)$ and $g(t,x)$ the corresponding random variables.

\subsection{Exponential moment estimates}
For $U=(U(s,y))_{s\in[0,T], y\in\R^d}\in C^{1,2}([0,T]\times \R^d, \R)$
we define for all $x\in\cadlag$ and all $t\geq 0$
\begin{equation*}
\begin{aligned}
(\mathcal{G}_{f,g} U)(t,x) & \define  
\big(\tfrac{\partial}{\partial s} U\big) (t,x(t))
 + \big(D_y U\big) (t,x(t)) f(t,x)\\
 &\qquad  +  \tfrac{1}{2}\text{trace}(g(t,x) g(t,x)^T \text{Hess}_y U(t,x(t))),
\end{aligned}
\end{equation*}
where $D_y \define (\tfrac{\partial}{\partial y_1}, ..., \tfrac{\partial}{\partial y_d})$.

One difference of the following corollary to corresponding results for non-path-dependent SDEs, see \cite[Corollary 2.4]{CoxHutzenthalerJentzen} and \cite[Corollary 3.3]{Hudde}, is, that we assume that $U$ is non-negative.

See also \cite[Theorem 2.1]{HutzenthalerNguyen}, in which moment estimates for path-dependent SDEs are proven using the stochastic Gronwall inequality \cite[Theorem 2.1]{MehriScheutzow}. 

\begin{corollary}[Exponential moment estimates for path-dependent SDEs]\label{cor:exponentialMoments}
Let $X$ be a (strong) solution of the SDE \eqref{eq:SDE} satisfying $\int_0^t |f(s,X)|\dd s + \int_0^t |g(s,X)|^2_F \dd s < \infty \,\, \PP$-a.s. for all $t\in[0,T]$. Let $U=(U(s,y))_{s\in[0,T], y\in\R^d}\in C^{1,2}([0,T]\times \R^d, [0,\infty))$, and let $\gamma \geq 0$ and $\kappa\geq 0$. Assume that for all 
$x\in\cadlag, t\in[0,T]$
\begin{equation}\label{eq:Lassumption}
(\mathcal{G}_{f,g}U)(t,x) + \tfrac{1}{2} |\big(D_y U\big) (t,x(t)) g(t,x))|^2  \leq \gamma \sup_{s\in[-r,t]}U(s,x(s)) + \gamma \kappa.
\end{equation}
Then, for all $p\in(0,1)$ and all $t\in[0,T]$ 
\begin{equation*}
\EO\bigg[\sup_{t\in[0,T]}\exp(p U(t,X_t) \e^{-\gamma\beta T})\bigg]^{1/p} \leq \alpha_1\alpha_2 
\exp(U(0,X_0) + (\kappa + U_0)(1- e^{-\gamma \beta T})).
\end{equation*}
where $U_0 \define \sup_{s\in[-r,0]}U(s,z_s)$ and $\alpha_1$, $\alpha_2$ and $\beta$ only depend on $p$ (see \eqref{eq:constants}).
\end{corollary}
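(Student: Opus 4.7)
The strategy is to reduce the claim, via a single application of It\^o's formula, to the convex Bihari-LaSalle inequality \cref{thm:stochBihariConvex} \ref{item:convexSup} in the case $\eta(x) = x \log x$ (cf.\ \cref{cor:convex} b)). This makes rigorous the heuristic derivation sketched at the end of \cref{sec:intro}. Set the $\F_0$-measurable shift $c \define U_0 + \kappa$ and define $Z_t \define \exp(U(t,X_t) + c)$ for $t\in[0,T]$. Since $U\ge 0$ and $c\ge 0$, $Z_t \ge 1$, $\log Z_0 = U(0,X_0) + c$, and $\log Z_T^* = c + \sup_{t\in[0,T]} U(t,X_t)$, so that by monotonicity of $\exp$
\begin{equation*}
\EO\Bigl[\sup_{t\in[0,T]} \exp(p U(t,X_t) e^{-\gamma\beta T})\Bigr]^{1/p} = e^{-c\, e^{-\gamma\beta T}} \bigl\| (Z_T^*)^{e^{-\gamma\beta T}} \bigr\|_{p,\F_0}.
\end{equation*}
It therefore suffices to show $\|(Z_T^*)^{e^{-\gamma\beta T}}\|_{p,\F_0} \le \alpha_1\alpha_2 Z_0$.

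It\^o's formula applied to $Z_t = \exp(U(t,X_t) + c)$ yields
\begin{equation*}
\dd Z_t = Z_t \Bigl[(\mathcal{G}_{f,g}U)(t,X) + \tfrac{1}{2}|(D_y U)(t,X_t)g(t,X)|^2\Bigr] \dd t + \dd N_t,
\end{equation*}
with $N_t \define \int_0^t Z_s (D_y U)(s,X_s) g(s,X)\, \dd B_s$ a continuous local martingale starting at $0$ (well-defined by the standing $L^1$/$L^2$ integrability assumption on $f,g$, continuity of $X$, and local boundedness of $D_yU$). The precise choice $c = U_0 + \kappa$ is dictated by the pointwise inequality
\begin{equation*}
U(s, X_s) + \kappa \le \log Z_t^*\qquad \text{for every } s\in[-r,t],
\end{equation*}
which is verified in two sub-cases: for $s\in[0,t]$ one has $U(s,X_s) + \kappa = \log Z_s - U_0 \le \log Z_t^*$, and for $s\in[-r,0]$, $U(s,X_s)+\kappa \le U_0+\kappa = c \le U(0,X_0)+c = \log Z_0 \le \log Z_t^*$. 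Inserting this into \eqref{eq:Lassumption} and using $Z_t \le Z_t^*$ gives the scalar convex Bihari-LaSalle inequality
\begin{equation*}
Z_t \le Z_0 + \int_{(0,t]} \eta(Z_{s^-}^*) \dd A_s + N_t, \qquad \eta(x) \define x\log x \text{ on } [1,\infty),\ A_t \define \gamma t.
\end{equation*}

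The quadruple $(Z,A,H,N)$ with $H \equiv Z_0$ satisfies \nameref{def:sup} with $c_0 = 1$; the process $H$ is predictable since it is $\F_0$-measurable and constant in $t$, and $\eta_p(x) = \tfrac{1}{1-p}x\log x$ is convex, $C^1$, and vanishes at $c_0^p = 1$. With $G(x)=\log\log x$ and $G^{-1}(y)=\exp\exp y$ as in \cref{cor:convex} b), \cref{thm:stochBihariConvex} \ref{item:convexSup} applied in the predictable-$H$ case yields
\begin{equation*}
\bigl\|(Z_T^*)^{e^{-\gamma\beta T}}\bigr\|_{p,\F_0} = \bigl\|G^{-1}(G(Z_T^*) - \beta A_T)\bigr\|_{p,\F_0} \le \alpha_1\alpha_2\, \|Z_0\|_{p,\F_0} = \alpha_1\alpha_2\, Z_0,
\end{equation*}
which combined with the identity at the end of the first paragraph produces exactly the claimed bound with exponent $U(0,X_0) + (U_0+\kappa)(1 - e^{-\gamma\beta T})$.

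\textbf{Expected main obstacle.} The only delicate step is the choice of the additive shift $c = U_0 + \kappa$ in the definition of $Z_t$: it is exactly what is needed so that the history supremum over $[-r,0]$, which the process $Z$ does not see, and the additive constant $\gamma\kappa$ on the right of \eqref{eq:Lassumption} are absorbed simultaneously into a single $\log Z_t^*$ term. With any other choice, the resulting integral inequality would carry an extra additive constant, which would either violate the hypothesis $\eta_p(c_0^p) = 0$ of \cref{thm:stochBihariConvex} \ref{item:convexSup} or produce a bound of the wrong form. The local-martingale (rather than true martingale) nature of $N$ and the associated localization bookkeeping are already encoded in the proof of \cref{thm:stochBihariConvex}, so no additional work is needed at this level.
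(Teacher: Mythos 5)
Your proof is correct and produces exactly the paper's bound. The underlying strategy is the same as the paper's: apply It\^o's formula to an exponential of $U$, bound the drift using \eqref{eq:Lassumption}, and invoke \cref{thm:stochBihariConvex}~\ref{item:convexSup} for $\eta$ of the $x\log x$ type (cf.\ \cref{cor:convex}~b)), using $G^{-1}(G(x)-\beta A_T) = x^{\e^{-\beta A_T}}$ and $H$ an $\F_0$-measurable constant (hence predictable). The genuine difference is where you place the additive constant $U_0+\kappa$. The paper works with the unshifted $Z_t := \exp(U(t,X_t))$ and is led to the $\omega$-dependent function $\eta(x) = x(\log x + \kappa + U_0)$; because the theorem requires a \emph{deterministic} $\eta$, the paper must insert an extra step in which $U_0$ is approximated from above by countably-valued $\F_0$-measurable random variables and the theorem is applied on each level set. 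Your additive shift $c := U_0 + \kappa$ inside the exponent is precisely the $\F_0$-measurable change of scale $Z \mapsto \e^{c}Z$ that converts the paper's random $\eta(\cdot,U_0)$ into the deterministic $\eta(x)=x\log x$ (note $\e^{c}\,\eta(x,U_0) = (\e^{c}x)\log(\e^{c}x)$), and the case distinction you verify for $U(s,X_s)+\kappa\le\log Z_t^*$ is exactly the same pointwise bound the paper uses. With this shift the theorem applies at once with $H\equiv Z_0$, the approximation step disappears, and the scale constant resurfaces in the final bookkeeping as the factor $\exp\big((U_0+\kappa)(1-\e^{-\gamma\beta T})\big)$, giving the identical conclusion. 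This is a modest but genuine streamlining of the paper's argument.
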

\begin{proof}[Proof of \cref{cor:exponentialMoments}]
We apply It\^o's formula to compute  $(Y_t)_{t\in[0,T]}  \define (U(t,X_t))_{t\in[0,T]}$:
\begin{equation*}
\begin{aligned}
\dd Y_t  = \dd U(t,X_t) & =  (G_{f,g} U)(t,X) \dd t  + \big(D_y U\big) (t,X_t) g(t,X) \dd B_t.
 \end{aligned}
\end{equation*}
We apply It\^o's formula to compute  $(Z_t)_{t\in[0,T]}  \define (\exp(Y_t))_{t\in[0,T]}$:
\begin{equation*}
\begin{aligned}
\dd Z_t  &= \dd \exp(Y_t)  = \exp(Y_t)\dd Y_t + \frac{1}{2} \exp(Y_t) \dd \langle Y, Y \rangle_t \\
& = \exp(Y_t)\bigg( (\mathcal{G}_{f,g} U)(t,X)  + \frac{1}{2} |\big(
D_y U\big) (t,X_t) g(t,X)|^2 \bigg) \dd t  + \dd \tilde{M}_t \\
& \leq   \gamma \exp(Y_t)(Y_t^* + \kappa  + \sup_{s\in[-r,0]}U(s,z_s)) \dd t  + \dd \tilde{M}_t
 \end{aligned}
\end{equation*}
where $\tilde{M} \define \int_0^t \exp(Y_s) \big(D_y U\big) (s,X_s) g(s,X) \dd B_s $ is a local martingale which starts in $0$. 
By assumption $U\geq 0$, and therefore $Z\geq 1$. Hence, we have for $\eta(x, u_0) = x(\log(x) + \kappa + u_0)$ for $x\geq 1$ and $u_0 \geq 0$: 
\begin{equation*}
Z_t \leq Z_0 + \int_0^t \eta(Z_s^*,U_0) \gamma \dd s  + \tilde{M}_t.
\end{equation*}
Using that $U_0$ is $\F_0$ measurable, we obtain by approximating $U_0$ from above by a sequence of random variables, where each random variable only takes countably many values, that \cref{thm:stochBihariConvex} implies for $A_t \define \gamma t$
\begin{equation*}
\| G^{-1}( G(Z_T^*) - \beta A_T)\|\pF \leq \alpha_1 \alpha_2 \|Z_0\|_{p,\F_0}
\end{equation*}
where
\begin{equation*}
G(x,\omega) = \log(\kappa + U_0(\omega) + \log(x)), \qquad 
G^{-1}(x,\omega) = \exp(\e^x-\kappa - U_0(\omega)),
\end{equation*}
where the inverse $G^{-1}$ is w.r.t to the variable $x$ for fixed $\omega\in\Omega$.
We compute the left-hand side:
\begin{equation*}
\begin{aligned}
\| G^{-1}( G(Z_T^*) - \beta A_T)\|\pF
& =\| \exp(\exp\{\log(\kappa + U_0 + \log(Z_T^*)) - \beta \gamma T\}-\kappa - U_0)\|\pF \\
&  =\| \exp(\exp\{\log(\kappa + U_0 + \log(Z_T^*))\} \e^{- \beta \gamma T}-\kappa - U_0)\|\pF \\
&  =\| \exp(\{\kappa + U_0 + \log(Z_T^*)\} \e^{- \beta \gamma T}) \e^{-\kappa - U_0}\|\pF\\
&  =\|(Z_T^*)^{e^{- \beta \gamma T}}  \exp(\{\kappa + U_0\}  \e^{- \beta \gamma T}))\e^{-\kappa - U_0}\|\pF \\
&  =\|(Z_T^*)^{e^{- \beta \gamma T}}\|\pF \e^{-(\kappa+U_0)(1- \e^{- \beta \gamma T})}.
\end{aligned}
\end{equation*}
Rearranging the terms and recalling the definition $Z_t = \exp(U(t,X_t))$ yields:
\begin{equation*}
\bigg\| \sup_{t\in[0,T]}\exp(U(t,X_t) \e^{-\gamma\beta T})\bigg\|\pF \leq \alpha_1\alpha_2 \e^{U(0,X_0)} \e^{(\kappa+U_0)(1- \e^{- \beta \gamma T})}.
\end{equation*}
\end{proof}


\begin{remark}
Note that \eqref{eq:Lassumption} is a weaker assumption than assuming
\begin{equation}\label{eq:LassumptionStrong}
\begin{aligned}
(\mathcal{G}_{f,g}U)(t,x) \leq \tilde{\gamma} \sup_{s\in[-r,t]}U(s,x(s)) + \tilde{\gamma}
\kappa,  \\
\text{and} \quad 
\tfrac{1}{2} |\big(D_y U\big) (t,x(t)) g(t,x))|^2  \leq \tilde{\gamma} \sup_{s\in[-r,t]}U(s,x(s)) + \tilde{\gamma} \kappa,
\end{aligned}
\end{equation}
since $(\mathcal{G}_{f,g}U)(t,x)$ can also be negative, and hence
\eqref{eq:Lassumption} allows for suitable $f$ a considerably weaker assumption on $g$.
Alternative proofs, using e.g. the pathwise  BDG inequality, Young's inequality and then e.g. It\^o's formula, only yield estimates under the stronger assumption  \eqref{eq:LassumptionStrong} in path-dependent case. See \cref{example:1} for an example, where \eqref{eq:Lassumption} is satisfied, but not \eqref{eq:LassumptionStrong}.
\end{remark}

We use in the following examples (as before) the notation $\beta = (1-p)^{-1}$, $\alpha_1 =(1-p)^{-1/p}$ and $\alpha_2 = p^{-1}$.

\begin{example} Let $X$ be a solution of the SDE \eqref{eq:SDE} satifying $\int_0^t |f(s,X)|\dd s + \int_0^t |g(s,X)|^2_F \dd s < \infty \,\, \PP$-a.s. for all $t\geq 0$.
For $U(t,x) \define R |x|^2$ for some constant $R>0$ the assumption \eqref{eq:Lassumption} can be slighty strengthened to
\begin{equation*}
2R \langle x(t), f(t,x)\rangle +
R |g(t,x)|_F^2 + 2R^2 |x(t)|^2 |g(t,x)|^2
\leq \gamma R \sup_{s\in[-r,t]}|x(s)|^2 + \gamma \kappa.
\end{equation*}
and obtain the estimate
\begin{equation*}
\EO\bigg[\sup_{t\in[0,T]}\exp(pR |X_t|^2 \e^{-\gamma\beta T})\bigg]^{1/p} \leq \alpha_1\alpha_2 
\e^{R|z_0|^2} e^{(\kappa + \sup_{t\in[-r,0]}R|z_t|^2)(1- e^{-\gamma \beta T})}.
\end{equation*}

In particular, if we assume there exist constants $\gamma_1 \geq 0$, $\gamma_2 \geq 0$ such that for all $\omega \in \Omega$, $t\in[0,T]$ and $x\in \cadlag$
\begin{equation*}
\langle x(t), f(t,x)\rangle \leq \gamma_1  \sup_{s\in[-r, t]} |x(s)|^2, \qquad 
|g(t,x)|_F^2 \leq \gamma_2.
\end{equation*}
Then, we have
\begin{equation*}
\EO\bigg[\sup_{t\in[0,T]}\exp(pR |X_t|^2 \e^{-\gamma\beta T})\bigg]^{1/p} \leq \alpha_1\alpha_2 
\exp(R|X_0|^2) e^{(\kappa + U_0)(1- e^{-\gamma \beta T})}
\end{equation*}
where $\gamma \define 2\gamma_1 + 2R \gamma_2$, $\kappa \define \frac{R \gamma_2}{2\gamma_1 + 2R \gamma_2}$ and $U_0\define \sup_{u\in[-r,0]}R |z_u|^2$.
\end{example}

\begin{example}\label{example:1}
Choose $U$ as in the previous example, $d=1$, $R=1$. Then, the following coefficients satisfy \eqref{eq:Lassumption} but not \eqref{eq:LassumptionStrong}: 
\begin{equation*}
f(t,x) = -\tfrac{1}{2} x(t) -x(t)^3 + \sup_{s<t} |x(s)|, \quad 
g(t,x) = x(t)^2 + (\sup_{s<t} |x(s)| \wedge 1) \quad \forall x\in\cadlag, \, \forall t\geq 0.
\end{equation*}

\end{example}

\begin{example}  Let $X$ be a solution of the SDE \eqref{eq:SDE} satisfying $\int_0^t |f(s,X)|\dd s + \int_0^t |g(s,X)|^2_F \dd s < \infty \,\, \PP$-a.s. for all $t\geq 0$. Choose $U(t,x) \define R (|x|^2+1)^{1/2}$ for some constant $R>0$.
Let the coefficients $f$ and $g$ satisfy 
\begin{equation}\label{eq:example2-Assumption}
\begin{aligned}
\langle x(t),f(t,x)\rangle + 
\tfrac{d}{2}   |g(t,x)|_F^2
+ \tfrac{1}{2} R (1+|x(t)|^2)^{-1/2} |g(t,x)|^
2 |x(t)|^2 \\ \leq \gamma \sup_{s\in[-r,t]} (|x(s)|^2+1)^{1/2}
(1+|x(t)|^2)^{1/2},
\end{aligned}
\end{equation}
then \cref{cor:exponentialMoments} implies for all $p\in(0,1)$ and all $T\geq 0$ 
\begin{equation*}
\EO\bigg[\sup_{t\in[0,T]}\exp(pR(|X_t|^2+1)^{1/2} \e^{-\gamma\beta T})\bigg]^{1/p} \leq \alpha_1\alpha_2 
\exp\big(R(|z_0|^2+1)^{1/2} +(\kappa +R(\sup_{t\in[-r,0]}|z_t|^2+1)^{1/2})(1- e^{-\gamma \beta T})\big).
\end{equation*}

This can be seen as follows: The function $y\mapsto  R (|y|^2+1)^{1/2}$ is convex, hence $\text{Hess}_y U(t,y)$ is positive semidefinite for any $t\geq 0, y\in\R^d$. Moreover, we have $\text{trace}(\text{Hess}_y U(t,x(t)))\leq Rd(1+|x(t)|^2)^{-1/2}$. Using that $\text{trace}(A_1 A_2) \leq \text{trace}(A_1) \text{trace}(A_2)$ for symmetric positive semidefinite matrices $A_1, A_2$, the assumption \eqref{eq:Lassumption} can be strengthened to
\begin{equation*}
\begin{aligned}
(1+|x(t)|^2)^{-1/2}R \langle x(t),f(t,x)\rangle + 
\tfrac{1}{2} \text{trace}(g(t,x)g(t,x)^T) Rd(1+|x(t)|^2)^{-1/2} \\
+ \tfrac{1}{2} (1+|x(t)|^2)^{-1}R^2 |g(t,x)|^2 |x(t)|^2 \leq \gamma R \sup_{s\in[-r,t]} (|x(s)|^2+1)^{1/2}.
\end{aligned}
\end{equation*}
Multiplying with $(1+|x(t)|^2)^{1/2}R^{-1}$ implies \eqref{eq:example2-Assumption}.

In particular, if there exist constants $\gamma_1 \geq 0$, $\gamma_2 \geq 0$ such that for all 
$\omega \in \Omega$, $t\in[0,T]$ and $x\in \cadlag$
\begin{equation*}
\begin{aligned}
\langle x(t),f(t,x)\rangle & \leq \gamma_1 \sup_{s\in[-r,t]} (|x(s)|^2+1)^{1/2}
(1+|x(t)|^2)^{1/2}, \\
|g(t,x)|_F^2 & \leq \gamma_2 \sup_{s\in[-r,t]} (|x(s)|^2+1)^{1/2},
\end{aligned}
\end{equation*}
then, we have
\begin{equation*}
\EO\bigg[\sup_{t\in[0,T]}\exp(pR(|X_t|^2 +1)^{1/2}\e^{-\gamma\beta T})\bigg]^{1/p} \leq \alpha_1\alpha_2 
\exp\big\{ R(|z_0|^2+1)^{1/2} + U_0 (1- e^{-\gamma \beta T})\big\}
\end{equation*}
where $\gamma \define \gamma_1 + \tfrac{d}{2} \gamma_2 + \tfrac{R}{2}\gamma_2$ and
 $U_0\define \sup_{u\in[-r,0]}R(|z_u|^2+1)^{1/2}$.
\end{example}

\subsection{Tail estimates}
\begin{remark}[Estimate for non-path-dependent SDEs]
Let $Y$ be a global solution of the following (\emph{not} path-dependent) Brownian-driven SDE
\begin{equation*}
d Y_t = \tilde{f}(t,Y_t) \dd t + \tilde{g}(t,Y_t) \dd B\ti, \qquad  Y_0 = y_0,
\end{equation*}
where $y_0\in\R^d$ is a deterministic initial value and $\tilde{f}$ and $\tilde{g}$ satify suitable measurability conditions to make the SDE well-defined. It can be easily seen that if the coefficients satisfy the following one-sided coercivity condition (for some $K>0$) 
\begin{equation*}
2 \langle y, \tilde{f}(t,y)) \rangle + |\tilde{g}(t,y)|_F^2 \leq K |y|^2 \qquad  \forall y\in \R^d, \,  \forall t\geq 0
\end{equation*}
then we have (e.g. by applying \cref{thm:stochBihariConvex} b) or \cref{cor:GronwallNoSup} to $(|Y|^2_t)_{t\geq 0}$) for all $u>0$
\begin{equation}\label{eq:estimate-SDE}
\PP\left[\sup_{t\in[0,T]} |Y|_t^2 >u\right] \leq  \frac{\e^{KT}}{u}|y_0|^2,
\end{equation}
i.e. in particular $\|(|Y|^*_T)^2\|_{L^{1,\text{w}}} \leq \e^{KT}|y_0|^2< \infty$ for all $T\geq 0$.
\end{remark}

The following corollary of \cref{lemma:pathdependentSDE} and \cref{thm:sharp-tail} shows that SDEs with a path-dependent drift coefficient enjoy in general a faster growth in $u$.
\begin{corollary}\label{cor:tail-of-path-SDE} Let $d=1$, $r>0$ and $T>0$. For the path-dependent SDE \eqref{eq:SDE} with initial value $z_t=\sqrt{2} \,\,\forall t\in[-r,0]$ there exist coefficients $f$ and $g$ and a global strong solution $(Y_t)_{t\in[-r,\infty)}$ satisfying
\begin{enumerate}
\item $\int_0^T |f(s,Y)|\dd s + \int_0^T |g(s,Y)|^2_F \dd s < \infty \,\, \PP$-a.s.,
\item $\forall t\geq 0$, $y \in \cadlag$
\begin{equation*}
2 \langle y(t), f(t,y) \rangle + |g(t,y)|_F^2 \leq \sup_{s\in[0,t]}|y(s)|^2,
\end{equation*}
\item and $\sup_{u>0} \big(u\PP[\sup_{t\in[0,T]}Y_t^2 >u]\big)  = \infty$.
\end{enumerate}
In particular an estimate of the type \eqref{eq:estimate-SDE} does not hold.
\end{corollary}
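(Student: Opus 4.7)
The plan is to obtain $Y$ as a simple transform of the process $X$ built in \cref{lemma:pathdependentSDE}, in such a way that the required initial value $\sqrt{2}$ is produced and the one-sided coercivity with the constant $1$ (not $K$) holds for the coefficients. The tail explosion of $\sup_t Y_t^2$ will then be inherited directly from \cref{thm:sharp-tail}.

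\textbf{Construction.} Fix $T > 0$ and choose $\varepsilon, \delta \in (0,1)$, $k \in \mathbb{N}$ with $T = k\delta + \varepsilon \delta$, a function $l$ as in \cref{lemma:pathdependentSDE}, and let $X$ be the associated continuous non-negative solution of \eqref{eq:path-dependent-SDE-sharpness} with coefficients $b, \sigma$ from that lemma. Define
\begin{equation*}
Y_t := \sqrt{1 + X_t} \quad \text{for } t \geq 0, \qquad Y_t := \sqrt{2} \quad \text{for } t \in [-r, 0],
\end{equation*}
so $Y_0 = \sqrt{1 + X_0} = \sqrt{2}$ matches $z_0$, and $Y_t \geq 1$ throughout. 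Applying It\^o's formula to $\sqrt{1 + X_t}$ and using $X_t^* = (Y_t^*)^2 - 1$ (with $Y_t^* = \sup_{s \in [0,t]} |Y_s|$) shows that $Y$ satisfies \eqref{eq:SDE} with the coefficients
\begin{equation*}
f(t, y) := \mathbbm{1}_{\{y(t) > 0\}} \left[ \frac{b(t, y^2 - 1)}{2 y(t)} - \frac{\sigma(t, y^2 - 1)^2}{8\, y(t)^3} \right], \qquad g(t, y) := \mathbbm{1}_{\{y(t) > 0\}} \, \frac{\sigma(t, y^2 - 1)}{2 y(t)},
\end{equation*}
where $y^2 - 1$ denotes the path $s \mapsto y(s)^2 - 1$. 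The cut-off $\mathbbm{1}_{\{y(t)>0\}}$ is only needed to make $f, g$ well-defined measurable functions on all of $\cadlag$; on the solution path $Y$ (where $Y \geq 1 > 0$) it equals $1$, so the It\^o computation is unaffected.

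\textbf{Verification.} For the integrability condition (i), since $Y \geq 1$ the estimate $|f(s, Y)| \leq |b(s,X)| + \tfrac{1}{8}\sigma(s,X)^2$ and $|g(s,Y)|^2 \leq \tfrac{1}{4}\sigma(s,X)^2$ reduce the claim to \eqref{eq:lemma-path-dependent-SDE-1} from \cref{lemma:pathdependentSDE}. For the coercivity (ii), a direct computation gives, on $\{y(t) > 0\}$,
\begin{equation*}
2 y(t) f(t, y) + g(t, y)^2 = b(t, y^2 - 1) = g_{\varepsilon, \delta}(t) \left[ (y_t^*)^2 - 1 \right] \leq (y_t^*)^2 = \sup_{s \in [0, t]} |y(s)|^2,
\end{equation*}
while on $\{y(t) \leq 0\}$ the left-hand side is zero. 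For the tail blow-up (iii),
\begin{equation*}
\sup_{u > 0} u \, \PP\!\left[\sup_{t \in [0,T]} Y_t^2 > u\right] = \sup_{u > 1} u \, \PP\!\left[\sup_{t \in [0,T]} X_t > u - 1\right] \geq \sup_{v > 0} v \, \PP\!\left[\sup_{t \in [0,T]} X_t > v\right] = \infty
\end{equation*}
by \cref{thm:sharp-tail}.

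\textbf{Main point.} The only non-trivial aspect is ensuring the one-sided coercivity holds with coefficient exactly $1$ for \emph{all} paths $y$, not merely on the solution, while keeping $f, g$ globally measurable. Both issues are handled cleanly by the choice $Y = \sqrt{1 + X}$ (which ensures $Y \geq 1$, so the indicator does no damage on the solution) together with the cut-off $\mathbbm{1}_{\{y(t) > 0\}}$ (which zeroes out $f, g$ on pathological paths where the formulas would blow up).
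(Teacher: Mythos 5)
Your proof is correct and follows essentially the same route as the paper: both set $Y_t = \sqrt{1+X_t}$ for the process $X$ of \cref{lemma:pathdependentSDE}, recover the coefficients via It\^o's formula with a cut-off that is harmless on the solution (you use $\mathbbm{1}_{\{y(t)>0\}}$, the paper uses $\mathbbm{1}_{\{y_t>1/2\}}$, which makes no difference since $Y\geq 1$), verify the coercivity by the cancellation $2y(t)f(t,y)+g(t,y)^2 = b(t,y^2-1)$, and inherit the tail blow-up from \cref{thm:sharp-tail}. The only other deviation is a slightly looser (still valid) bound $|f(s,Y)|\leq |b(s,X)|+\tfrac18\sigma(s,X)^2$ in the integrability check, which is immaterial.
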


\begin{proof}[Proof of \cref{cor:tail-of-path-SDE}] 
Fix some $\varepsilon, \delta \in (0,1)$ and $k\in\N$ such that $T = k\delta + \varepsilon\delta$. Let $(X_t)_{t\geq 0}$ be the process from \cref{lemma:pathdependentSDE}. Define $Y_t \define \sqrt{X_t + 1}$ for all $t>0$ and $Y_t \define \sqrt{2} = z_t \, \forall t\in[-r,0]$. Since $X_t \geq 0$ for all $t$ we have 
$Y_t \geq 1$ for all $t$ and $X_t = Y_t^2 -1$ for all $t\geq 0$.
Moreover, by Ito's formula we have

\begin{equation*}
\dd Y_t = \frac{1}{2} (X_t + 1)^{-1/2} \dd X_t  - \frac{1}{8} (X_t + 1)^{-3/2}\dd \langle X, X\rangle_t  = f(t,Y) \dd t + g(t,Y) \dd W_t
\end{equation*}
where  $\forall t\geq 0$, $\forall y\in C([-r,\infty),\R)$
\begin{equation*}
\begin{aligned}
f(t,y) &\define \left( \frac{1}{2} \frac{b(t,y^2-1)}{y_t}
-\frac{1}{8}\frac{1}{y_t^3}\sigma^2(s,y^2 -1)\right) \one_{\{y_t>1/2\}} \\
g(t,y) &\define  \frac{1}{2} \frac{1}{y_t} \sigma(t,y^2-1) \one_{\{y_t>1/2\}}
\end{aligned}
\end{equation*}
where $b$ and $\sigma$ are defined as in \cref{lemma:pathdependentSDE}. 
$b(t,y^2-1)$ denotes $b(t,(y(t)^2-1)_{t\in[0,\infty)})$.

\begin{enumerate}
\item We have 
\begin{equation*}
\int_0^T |f(s,Y)| \dd s + \int_0^T |g(s,Y)|_F^2 \dd s
\leq \int_0^T \frac{1}{2} |b(s,X)| \dd s + 
\left(\frac{1}{8} + \frac{1}{4}\right)\int_0^T|\sigma(s,X)|_F^2 \dd s
\end{equation*}
which is finite $\PP$-a.s. by \cref{lemma:pathdependentSDE}.
\item We have for $y\in C([-r,\infty),\R)$ and $t\geq 0$ such that $y(t)>1/2$:
\begin{equation*}
\begin{aligned}
2\langle y(t), f(t,y) \rangle + |g(t,y)|_F^2  & = 
b(t,y^2-1)
-\frac{2}{8}\frac{1}{y_t^2}\sigma^2(t,y^2 -1) + \frac{1}{4} \frac{1}{y^2_t} \sigma^2(t,y^2-1) \\ & = b(t,y^2-1) \leq \sup_{u\in[0,t]} y^2(u)
\end{aligned}
\end{equation*}

\item By \cref{thm:sharp-tail} we have 
$\sup_{u>0}\big(u\PP[X_t >u]\big)= \infty$, which implies 
\begin{equation*}
\sup_{v>1}\big(v\PP[Y^2_T >v]\big)= \sup_{v>1}\big(v\PP[X_T>v-1]\big)=  \sup_{u>0}\big((u+1)\PP[X_T>u]\big)=\infty.
\end{equation*}
\end{enumerate}
\end{proof}


\section{Appendix}
\subsection{Proof of the sharpness of $\alpha_1$ and $\alpha_1\alpha_2$ (\cref{thm:sharpnessAlpha})}
\begin{proof}[Sketch of proof of \cref{thm:sharpnessAlpha}] 
The inequalities are proven in \cref{lemma:2}. Here we only shortly discuss the sharpness of the constants: \\

\noindent  Proof of \ref{item:sharpnessAlpha3}: Let $B$ be a Brownian motion on a suitable underlying filtered probability space. Choose $H\ti\equiv 1$. Let $\tau$ be the time $B$ first hits $-1$ and set $M\ti \define B_{t\wedge\tau}$ and $X\ti \define M\ti + H\ti$ for all $t\geq 0$. The stopping times $\tau$ ensures $X \geq 0$. An easy calculation (which was also used in step 4 of the proof of \cref{thm:sharpnessBeta} to compute $\E[(X^{\varepsilon,\delta}_{\tau_0})^{*,p}] = \frac{1}{1-p}$) gives $\|\sup_{t\geq 0} X\ti\|_p = (1-p)^{-1/p}$ for $p\in(0,1)$, which implies the assertion of \ref{item:sharpnessAlpha3}.

\bigskip

\noindent Proof  of \ref{item:sharpnessAlpha1} and \ref{item:sharpnessAlpha2}:
Note that the assertions of  \ref{item:sharpnessAlpha1} and \ref{item:sharpnessAlpha2}
concerning sharpness are identical. Fix some $p\in(0,1)$. In the proof of \cite[Theorem 2.1]{GeissScheutzow}) families of continuous processes $X^{(n)}, n\in\N$ and $H^{(n)}, n\in\N$ are defined, satisfying:
\begin{enumerate}[label=(\roman*)]
\item $X\nn$ is non-negative, adapted, continuous,
\item $H\nn$ is non-negative, adapted, continuous, non-decreasing,
\item $\E[X^{(n)}_{\tau}] \leq \E[H^{(n)}_{\tau}]$ for all bounded stopping times $\tau$,
\item and  \begin{displaymath} 
\alpha_1 \alpha_2 = \lim_{n\to\infty} \frac{\|\sup_{t\geq 0} X^{(n)}\ti \|_p}{\|\sup_{t\geq 0} H^{(n)}\ti\|_p}. 
\end{displaymath}  
\end{enumerate}
\bigskip

\noindent To prove the sharpness assertion of \ref{item:sharpnessAlpha1} and \ref{item:sharpnessAlpha2}, it remains to show the existence of a family of local martingales
$M\nn,n\in\N $ with no negative jumps and $M^{(n)}_0=0$ such that
$X\nn\ti \leq H\nn\ti + M\nn\ti$ a.e. for all $t\geq 0, n\in\N$. \\

\noindent To this end, we first shortly recall the definition of $X^{(n)}$ and $H^{(n)}$ from  \cite[Theorem 2.1]{GeissScheutzow}: Let $Z$ be an exponentially distributed random variable on a complete probability space $(\Omega, \F, \PP)$ with $\E[Z]=1$. Set
\begin{equation*}
a:[0,\infty) \to [0,\infty), \quad t\mapsto \exp(t/p).
\end{equation*}
Define for all $t\geq 0$
\begin{equation*}
\tilde{X}_t \define a(Z)\one_{[Z,\infty)}(t), \qquad \tilde{H}_t \define \int_0^{t\wedge Z} a(s)\dd s.
\end{equation*}

\noindent Choose $\tilde{\F}_t \define \sigma(\{ Z\leq r\} \mid 0\leq r \leq t)$ for all $t\geq 0$.
The compensator of $\tilde{X}$ is $\tilde{H}$ due to $Z$ being exponentially distributed. Now we use $\tilde{X}$ and $\tilde{H}$ to construct the families of processes $X\nn, n\in\N$ and $H\nn, n\in\N$.  Assume w.l.o.g. that there exists a Brownian motion $B$ on $(\Omega, \F,\PP)$ such that $B$ is independent of $Z$. Denote by $(\F_t)_{t\geq 0}$ the smallest filtration satisfying the usual conditions which contains $(\tilde{\F}_t)_t$ and with respect to which 
$B$ is a Brownian motion. Denote by $g_{n,n+1}:[0,\infty) \to [0,1]$ a continuous non-decreasing function such that \begin{equation*}
g_{n,n+1}(t) = 0  \quad \forall t\in[0,n], \,\, \text{and}  \,\,\, g_{n,n+1}(t) = 1 \quad \forall t\in[n+1,\infty).
\end{equation*}
Define:
\begin{equation*}
\begin{aligned}
\tau\nn & \define \inf\{t\geq n+1 \mid \tilde{X}_n + (B\ti - B_{n+1})\one_{\{t\geq n+1\}} = 0\}, \\
X\nn_t & \define  g_{n,n+1}(t) \tilde{X}_n + (B_{t\wedge\tau\nn} - B_{t\wedge(n+1)}), \\
H\nn_t & \define \tilde{H}_{t\wedge n}.
\end{aligned}
\end{equation*}
The stopping time $\tau\nn$ ensures that $X\nn$ is non-negative. Define
\begin{equation*}
M\nn\ti \define  \tilde{X}_{t\wedge n}-\tilde{H}_{t\wedge n} + B_{t\wedge \tau\nn} - B_{t\wedge(n+1)},
\end{equation*}
which is a local martingale (recall that $\tilde{H}$ is the compensator of $\tilde{X}$). Moreover, 
\begin{equation*}
X^{(n)}\ti \leq  \tilde{X}_{t\wedge n} + \big(B_{t\wedge\tau\nn} - B_{t\wedge(n+1)}\big) 
\leq H\nn\ti + M\nn\ti.
\end{equation*}
It is easily seen that $M$ only has non-negative jumps, as $\tilde{X}$ only has non-negative jumps.
\end{proof}

\subsection{Proof of time change lemma (\cref{lemma:smoothenJumps})}
\begin{proof}[Proof of \cref{lemma:smoothenJumps}]
We prove the lemma for \nameref{def:nosup}, the proof for \nameref{def:sup} is upto one minor difference (mentioned in the proof below) identical.

To keep the notation simple, we first prove the claim for the special case that $A$ has at most one jump. Afterwards we extend the same technique to prove the assertion for  $A$ having finitely many jumps on each path. By approximation we then can prove the assertion for general $A$. \\

\noindent \textbf{Step 1:} We prove the lemma under the additional assumption that $A$ has at most one jump. \\

We will first smoothen out the jump of $A$ yielding $(\hat{X},\,\hat{A},\,\hat{H},\,\hat{M})$ and $(\hat{\F}_t)_{t\geq 0}$  enjoying properties $\Anosup$ and c) - e) and such that $\hat{A}$ is continuous and strictly increasing. By a time shift the family $(\hat{X},\,\hat{A},\,\hat{H},\,\hat{M})$ will yield $(\tilde{X},\,\tilde{A},\,\tilde{H},\,\tilde{M})$ i.e. the claim of the lemma. 

More precisely, define $\tau_1 \define \inf\{t>0 \mid \Delta A_t >0\}$ where $\inf \emptyset \define \infty$.  We smoothen the jump of $A$ by inserting at time $\tau_1$ a time interval of length $1$:
\begin{equation*}
\hat{A}_t \define 
\begin{cases}
A_t & \forall t\in[0,\tau_1) \\
\text{linear interpolation between } A_{\tau_{1}-} \text{ and } A_{\tau_{1}}  &  \forall t\in[\tau_1,\tau_1+1) \\
A_{t-1} & \forall t\in[\tau_1+1,\infty). 
\end{cases}
\end{equation*}
We also define correspondingly time-changed processes $\hat{X}$, $\hat{H}$, $\hat{M}$ such that $(\hat{X},\,\hat{A},\,\hat{H},\,\hat{M})$ satisfy $\Anosup$. Note that the following definition would \textit{not} work unless $M$ has only non-negative jumps, which is also the reason why the choice $\tilde{X}_t \define X_{A_t^{-1}}$ (for a generalized inverse $A^{-1}$) does not work in general:
\begin{equation*}
\bar{Y}_t \define 
\begin{cases}
Y_t & \forall t\in[0,\tau_1) \\
Y_{\tau_1} & \forall t\in[\tau_1,\tau_1+1) \\
Y_{t-1} & \forall t\in [\tau_1+1,\infty)  
\end{cases} \qquad \text{for } Y\in\{X,M,H\}.
\end{equation*}
The processes $\bar{X}, \hat{A}, \bar{H},\bar{M}$ will not
satisfy \eqref{eq:GronwallAssumptionNoSup} on the interval $[\tau_1, \tau_1 +1)$ in general: At time $t=\tau_1$ the local martingale $M$ might have a large negative jump $\Delta M_{\tau_1} \ll 0$ such that the right-hand side of \eqref{eq:GronwallAssumptionNoSup} becomes negative, hence in particular less than $X_{\tau_1}$. Instead, we define in the case of \nameref{def:nosup}
\begin{equation*}
\hat{Y}_t \define 
\begin{cases}
Y_t & \forall t\in[0,\tau_1) \\
Y_{\tau_1-} & \forall t\in[\tau_1,\tau_1+1) \\
Y_{t-1} & \forall  t\in [\tau_1+1,\infty)   
\end{cases} \qquad \text{for } Y\in\{X,M,H\}.
\end{equation*}
By this definition, $(\hat{X},\,\hat{A},\,\hat{H},\,\hat{M})$ satisfy \eqref{eq:GronwallAssumptionNoSup}: For $t\in[0,\tau_1)$ this is trivial, for time $t=\tau_1$ this follows by taking the left limits, for $t\in(\tau_1,\tau_1+1)$ only the integral term $\int_{(0,t]} \eta(X_{s^-})\dd A_s$ is increasing, hence it follows from \eqref{eq:GronwallAssumptionNoSup} holding for $t=\tau_1$. For $t\geq \tau_1 + 1$ \eqref{eq:GronwallAssumptionNoSup} corresponds to 
\eqref{eq:GronwallAssumptionNoSup} for $(X, A, H, M)$ at time $t-1$.

In the case of \nameref{def:sup} we slightly modify the definition of $\hat{X}$ as we do not assume that $X$ has left limits in this case:
\begin{equation*}
\hat{X}_t \define 
\begin{cases}
X_t & \forall t\in[0,\tau_1) \\
\liminf_{s\nearrow \tau_1} X_{s} \quad  & \forall t\in[\tau_1,\tau_1+1) \\
X_{t-1} & \forall t\in [\tau_1+1,\infty).
\end{cases} 
\end{equation*}
Due to the supremum in the integral $\int_{(0,t]} \eta(\hat{X}^*\sm) \dd A\s$ the processes $(\hat{X},\,\hat{A},\,\hat{H},\,\hat{M})$ satisfy \eqref{eq:GronwallAssumption}.

Now we define a suitable filtration which ensures that $\hat{M}$ is a local martingale. Due to $A$ being predictable, there exists an announcing sequence $\tau_1^{(n)}$  of $\tau_1$. Define the following stopping times for each $n\in\N$ and $t\geq 0$:
\begin{equation*}
\sigma_n(t) \define 
\begin{cases}
t\wedge \tau_1^{(n)} & t\in[0,\tau_1^{(n)}+1) \\
t-1 & t\in[\tau_1^{(n)}+1,\infty).
\end{cases}
\end{equation*}
Note that $\sigma_n(t)$ in non-decreasing in $n$ (for each $t\geq 0$ and each $\omega\in\Omega$). Moreover, we have $\lim_{n\to\infty} Y_{\sigma_n(t)} = \hat{Y}_t$ for $Y=X,\,H,\,M$ in the case of \nameref{def:nosup}. (In the case of \nameref{def:sup} this only holds for $Y=\,H,\,M$ however the following argumentation is still possible.)
 We choose $\bar{\F}_t \define \vee_{n\in\N} \F_{\sigma_n(t)}$ and denote by $(\hat{\F}_t)_{t\geq 0}$ the smallest filtration that contains $(\bar{\F}_t)_{t\geq 0}$ and satisfies the usual conditions.
Clearly, $\hat{X}$, $\hat{H}$ and $\hat{M}$ are adapted w.r.t. to $(\hat{\F}_t)_{t\geq 0}$.  The property d) is immediate and by definition e) is satisfied.

We show that $\hat{M}$ is a local martingale w.r.t. $(\bar{\F}_t)_{t\geq 0}$ (and hence also $(\hat{\F}_t)_{t\geq 0}$): We may assume w.l.o.g. that $\E[|M_\infty|] < \infty$. Fix some $0\leq s < t$. Define 
\begin{equation*}
\mathcal{D} = \{A \in \F \mid \E[(\hat{M}_t - \hat{M}_s) \one_A] = 0\}.
\end{equation*}
To prove that  $\hat{M}$ is a local martingale it suffices to show $
\bar{\F}_s \subseteq \mathcal{D}$. Due to pointwise convergence $\lim_{n\to\infty} M_{\sigma_n(r)} = \hat{M}_r$ and uniform integrability of $\{M_{\sigma_n(r)}, n\in\N\}$ for $r\in\{s,t\}$, we have for all $A\in\F_{\sigma_m(s)}, m\in\N$
\begin{equation*}
\E[(\hat{M}_t - \hat{M}_s) \one_A] = \E[\lim_{n\to\infty} (M_{\sigma_n(t)} - M_{\sigma_n(s)}) \one_A] =
\lim_{n\to\infty} \E[(M_{\sigma_n(t)} - M_{\sigma_n(s)}) \one_A]  = 0.
\end{equation*}
This implies that $\F_{\sigma_n(s)}\subseteq \mathcal{D}$ for every $n$. Hence the $\pi$-system $\cup_{n\in\N}\F_{\sigma_n(t)}$ is a subset of the Dynkin system $\mathcal{D}$, so the $\lambda$-$\pi$ theorem implies that
$\bar{\F}_s\subseteq \mathcal{D}$. This proves that $\hat{M}$ is indeed a local martingale w.r.t. $(\bar{\F}_t)_{t\geq 0}$ (and hence also $(\hat{\F}_t)_{t\geq 0}$). 

We show that predictability of $H$ implies predictability of $\hat{H}$: Continuous adapted processes remain continuous and adapted by the transformation $Y\mapsto \hat{Y}$. So we apply the monotone class theorem to $
\mathcal{H} = \{Y:[0,\infty)\times\Omega\to \R \mid \hat{Y} \text{ is predictable and bounded}\}$, noting that $\mathcal{M} = \{Y:[0,\infty)\times\Omega\to \R \mid \hat{Y} \text{ continuous, adapted, bounded}\}$ is closed under multiplication and contained in $\mathcal{H}$.

Now we use $(\hat{X},\,\hat{A},\,\hat{H},\,\hat{M})$ and $(\hat{\F}_t)_{t\geq 0}$ to construct  $(\tilde{X}, \tilde{A}, \tilde{H}, \tilde{M})$ and $(\Omega, \F,\PP, (\tilde{\F}_t)_{t\geq 0})$, i.e. prove Step 1. Noting that $\hat{A}$ is continuous, strictly increasing and $\hat{A}_\infty = +\infty$, $\hat{A}^{-1}(\omega):[0,\infty) \to [0,\infty)$ is a well-defined mapping for every $\omega\in\Omega$. Note that $\hat{A}^{-1}_t$ is a  $(\hat{\F}_{t})_{t\geq 0}$ stopping time for all $t\geq 0$.  We define for all $t\geq 0$
\begin{equation*}
\begin{aligned}
\tilde{Y}_t \define \hat{Y}_{\hat{A}^{-1}_t} \, \text{ for }\, Y \in \{X, A, H, M\} \quad \text{ and } \quad \tilde{\F}_t \define \hat{\F}_{\hat{A}^{-1}_t}.
\end{aligned}
\end{equation*}
We verify that the family $(\tilde{X},\,\tilde{A},\,\tilde{H},\,\tilde{M})$  satisfies inequality \eqref{eq:GronwallAssumptionNoSup} of $\Anosup$:
\begin{equation*}
\begin{aligned}
\tilde{X}_t  = \hat{X}_{\hat{A}^{-1}_t} & \leq 
\int_{(0,\hat{A}^{-1}_t]} \eta(\hat{X}\sm) \dd \hat{A}\s + \hat{M}_{\hat{A}^{-1}_t} + \hat{H}_{\hat{A}^{-1}_t} \\
 & = \int_{(0,\hat{A}^{-1}_t]} \eta(\hat{X}\sm) \dd \hat{A}\s 
+ \tilde{M}_t + \tilde{H}_t.
\end{aligned}
\end{equation*}
By change of variables we obtain $\int_{(0,\hat{A}^{-1}_t]} \eta(\hat{X}\sm) \dd \hat{A}\s =\int_{(0,t]} \eta(\hat{X}_{(\hat{A}_s^{-1}) -}) \dd s$. Since $\hat{A}^{-1}$ is continuous and strictly increasing, we have 
$\hat{X}_{(\hat{A}_s^{-1})-}
= \lim_{r\nearrow \hat{A}_s^{-1}} \hat{X}_r
= \lim_{r\nearrow s}\hat{X}_{(\hat{A}_r^{-1})}
= \tilde{X}_{s^-}$, which proves that inequality \eqref{eq:GronwallAssumptionNoSup} is satisfied by $(\tilde{X},\,\tilde{A},\,\tilde{H},\,\tilde{M})$.
It is clear that a) holds. The construction of the processes $(\hat{X},\,\hat{A},\,\hat{H},\,\hat{M})$ and  $\tilde{Y}_{\hat{A}_t} = \hat{Y}_{t}$ implies $\tilde{Y}_{A_t} = Y_{t}$, and hence the equalities of b) hold.

We verify that $A_s$ is a $(\tilde{\F}_t)_{t\geq 0}$ stopping time for any $s\geq 0$: For this we first show that $\tau_1$ is a $(\hat{\F}_t)_{t\geq 0}$ stopping time. We have due to $t\wedge\tau^{(n)}_1\leq \sigma_n(t)$ that $\F_{t\wedge \tau^{(n)}_1} \subseteq \F_{\sigma_n(t)}$. This implies:
\begin{equation*} 
\{\tau_1\leq t\} = \bigcap_{n\in\N} \{\tau^{(n)}_1< t\} = \bigcap_{n\in\N} \{\tau^{(n)}_1\wedge t< t\} \in \vee_{n\in\N} \F_{\sigma_n(t)} \subseteq \hat{\F}_{t}.
\end{equation*}
This implies by the definition of $\hat{A}$ and by using that $\hat{A}_t^{-1}$ is a $(\hat{\F}_t)_{t\geq 0}$ stopping time:
\begin{equation*}
\begin{aligned}
\{A_s \leq t\} &  = \{\hat{A}_s \leq t, s \in [0,\tau_1)\} \cup 
\{\hat{A}_{s+1} \leq t, s+1 \in[\tau_1+1, \infty)\} \\
&=\left( \{s \leq \hat{A}^{-1}_t\} \cap \{s\leq \tau_1 \wedge \hat{A}^{-1}_t\}\right)
\cup \left( \{s+1 \leq \hat{A}^{-1}_t\} \cap \{s+1 \geq (\tau_1+1)\wedge \hat{A}^{-1}_t\}\right) \in \hat{\F}_{\hat{A}^{-1}_t} = \tilde{\F}_t.
\end{aligned}
\end{equation*}
Hence b) is satisfied.

As $\hat{A}^{-1}$ is continuous, it is clear that c) and d) hold true. The right-continuity of $(\hat{\F}_{t})_{t\geq 0}$  and $(\hat{A}_t^{-1})_{t\geq 0}$ implies that $(\tilde{\F}_{t})_{t\geq 0}$ is a right-continuous filtration, therefore e) holds. 

\bigskip

\noindent \textbf{Step 2:} We define $\tilde{X}$, $\tilde{H}$, $\tilde{M}$ for general $A$. \\

The paths of $A$ are strictly increasing. We define the generalized inverse $A^{-1}(\omega):[0,\infty)\to[0,\infty)$ pathwise by
\begin{equation*}
A^{-1}_t(\omega) = \inf\{ r\geq 0 \mid A_r(\omega) \geq t\} 
\end{equation*} 
which satisfies $A^{-1}(\omega)\circ A (\omega)= \id_{[0,\infty)}$ and $A(\omega)\circ A^{-1}(\omega)|_{\text{range}(A(\omega))} = \id_{\text{range}(A(\omega))}$ on each path. Note that due to $A$ being \cadlagg, we have $(A(\omega)\circ A^{-1}(\omega))_t \geq t$. Note that this implies 
\begin{equation}\label{eq:smoothenProperty}
\forall t_1 \in \text{range}(A)(\omega) \text{ and } t_2 > t_1 \text{ we have } A^{-1}_{t_2}(\omega)> A^{-1}_{t_1}(\omega),
\end{equation}
however $A^{-1}$ is not strictly increasing,  only non-decreasing. We define
\begin{equation}\label{eq:LemmaSmoothen}
\tilde{Y}_t  = 
\begin{cases}
Y_{A^{-1}_t} &  \text{ if } t\in\text{range}(A) \\
\liminf_{r\nearrow A^{-1}_t} Y_{r} & \text{ if } t\notin\text{range}(A) 
\end{cases} \qquad \text{for  all } Y \in \{X, H, M\}.
\end{equation}
This definition generalizes the definition of Step 1. 
It is easily checked that $\tilde{Y}$ is right-continuous: Let $t\notin\text{range}(A)$. Then, $A_{s-} \leq t < A_s$ for $s=A^{-1}_t$. This implies $[t,A_s)\cap\text{range}(A) = \emptyset$ and that we have $A^{-1}_t = A^{-1}_r$ for all $ r\in[t,A_s)$. Together this implies that $\tilde{Y}$ is right-continuous in $t$. Now consider $t\in\text{range}(A)$. $A^{-1}$ is continuous and non-decreasing, hence for $t_n \searrow t$ we have $A^{-1}_{t_n} \searrow A^{-1}_t$, and hence the right-continuity of $Y$  and \eqref{eq:smoothenProperty} imply $\tilde{Y}$ is right-continuous in $t$. By similar arguments, using that $A^{-1}$ is continuous and non-decreasing, it can be verified that $\tilde{Y}$ has left limits if $Y$ has left limits.

Moreover, for $t\in \text{range}(A)$ we have
\begin{equation*}
\tilde{X}\ti \leq \int_{(0,t]} \eta(\tilde{X}\sm) \dd s + \tilde{M}\ti + \tilde{H}\ti \qquad \PP\text{-a.s}
\end{equation*}
due to change of variables (using $\tilde{Y}_{s^-} = Y_{\cdot -} \circ A^{-1}_s$
and $\{x\in]0,t] \mid A^{-1}_x \in ]a, b]\} = ]A_a, A_b]$ due to \eqref{eq:smoothenProperty}. For $t\notin \text{range}(A)$ it follows from the same argument as in Step 1.

However, defining a suitable filtration $(\tilde{\F}_t)_{t\geq 0}$ seems to be non-trivial: The choice $\tilde{\F}_t = \F_{A^{-1}_t}$ is \textit{not possible} since $\tilde{M}$ is not a martingale with respect to this filtration. Instead, we  first find a filtration for the special case that $A$ has at most finitely many jumps on each path (Step 3) and then obtain the desired result by an approximation argument (Step 4). For Step 3 it is crucial that $A$ is predictable. \\

\noindent \textbf{Step 3:} We prove the lemma under the following additional assumption: Assume that the jumps of $A$ be bounded from below, i.e. that $\exists \varepsilon >0$ such that for all $t\geq 0$ and all $\omega \in \Omega$  it holds that $\Delta A_t(\omega) \notin (0,\varepsilon)$. \\

We construct the filtration $(\tilde{\F}_t)_{t\geq 0}$ needed to complete the construction of Step 2 by repeating
the construction of Step 1. Using that $A$ has on each path on every bounded time interval at most finitely many jumps, we may smoothen the jumps of $A$ by inserting at the $k$-th jump of $A$ a time interval of length $2^{-k}$. To this end, set $\tau_{0}\define 0$ and denote by $\tau_{k}$ the time of the $k$th jump of  $A$, i.e. $\tau_{k} \define \inf\{t> \tau_{k-1} \mid \Delta A_t >0\}$ using $\inf \emptyset \define \infty$. Moreover, set $s_0 \define 0$ and $s_k \define \sum_{1\leq i\leq k} 2^{-i}$. Noting that for any $T>0$ we have finitely many $k$ with $\tau_k<T$ and $s_\infty = 1$, we define:
\begin{equation*}
\begin{aligned}
\hat{A}_t & \define  
\begin{cases} 
A_{t-s_i}   & \,\, \text{for } t\in [\tau_i + s_i, \tau_{i+i}+s_i),\,\, i\in\N_0,    \\
\text{linear interpolation between } A_{\tau_{i+1}-} \text{ and } A_{\tau_{i+1}}  &\,\, \text{for }  t \in[\tau_{i+1} + s_i, \tau_{i+1}+s_{i+1}),\,\, i\in\N_0. \\
\end{cases} \\
\hat{Y}_t &\define \begin{cases}
Y_{t-s_i}   & \,\, \text{for } t\in [\tau_i + s_i, \tau_{i+i}+s_i),\,\, i\in\N_0, \\
Y_{\tau_{i+1}-} &\,\, \text{for }  t \in[\tau_{i+1} + s_i, \tau_{i+1}+s_{i+1}),\,\, i\in\N_0,\\
\end{cases}  \qquad \text{for } Y\in\{X,M,H\}.
\end{aligned}
\end{equation*}
We can construct a sequence of announcing times
$\tau^{(n)}_i$, $n\in\N$, $i\in\N$ such that each $(\tau^{(n)}_i)_n$ announces $\tau_i$ and
\begin{equation*}
0 = \tau_0 \leq \tau^{(n)}_1 < \tau_1 \leq \tau_2^{(n)} < \tau_2 \leq  ... \leq \tau^{(n)}_i < \tau_i \leq \tau^{(n)}_{i+1} < \tau_{i+1} \leq ...
\end{equation*}
on $\{\tau_{i+1}<\infty\}$. Moreover, we define $\tau_0^{(n)} \define 0$ for all $n\in\N$. We define analogously as before:
\begin{equation*}
\sigma_n(t) 
\define  \begin{cases}
t-s_i   & \,\, \text{for } t\in [\tau^{(n)}_i + s_i, \tau^{(n)}_{i+i}+s_i),\,\, i\in\N_0, \\
\tau^{(n)}_{i+1} &\,\, \text{for }  t \in[\tau^{(n)}_{i+1} + s_i, \tau^{(n)}_{i+1}+s_{i+1}),\,\, i\in\N_0,  
\end{cases}
\end{equation*}
Set $\bar{\F}_t \define \vee_{n\in\N} \F_{\sigma_n(t)}$ and 
denote by $(\hat{\F}_t)_{t\geq 0}$ the smallest filtration that contains $(\bar{\F}_t)_{t\geq 0}$ and satisfies the usual conditions.
By the same arguments as in the first part of the proof, 
$(\Omega, \F,\PP, (\hat{\F}_t)_{t\geq 0})$  and $(\hat{X}_t)_{t\geq 0}$,  $(\hat{A}_t)_{t\geq 0}$, $(\hat{H}_t)_{t\geq 0}$, $(\hat{M}_t)_{t\geq 0}$  satisfy $\Anosup$  and c)-e). As in Step 1 we define for all $t\geq 0$
\begin{equation*}
\begin{aligned}
\tilde{Y}_t \define \hat{Y}_{\hat{A}^{-1}_t} \, \text{ for }\, Y \in \{X, A, H, M\} \quad \text{ and } \quad \tilde{\F}_t \define \hat{\F}_{\hat{A}^{-1}_t},
\end{aligned}
\end{equation*} 
which satisfy by the same arguments the assertion of this lemma. Moreover, this definition yields the same processes as definition \eqref{eq:LemmaSmoothen}.\\

\noindent \textbf{Step 4:} We prove the assertion of the lemma (without additional assumptions on $A$). \\

In \eqref{eq:LemmaSmoothen} of Step 2 we already defined $\tilde{X}$, $\tilde{H}$, $\tilde{M}$. It remains to find a suitable filtration and prove that $\tilde{M}$ is indeed a local martingale. To this end we use Step 3 and approximations. We define
\begin{equation*}
A^{n, small}_t \define \sum_{s\leq t} \Delta A_s \one_{\{\Delta A_s \leq \frac{1}{n} \}}, \qquad A^{(n)}_t \define A_t - A^{n,small}_t, \qquad \forall t\geq 0,
\end{equation*}
i.e. for all $t\geq 0$ we have $\Delta A^{(n)}_t \notin (0,\tfrac{1}{n})$. We have
\begin{equation}\label{eq:smoothenJumps1}
X_t \leq \int_0^t \eta(X_{s^-}) \dd A^{(n)}_s + M_t + H^{(n)}_t, \quad \text{where } H^{(n)}_t \define H_t + \int_0^t \eta(X_{s^-}) \dd A^{n,small}_s \qquad \forall t\geq 0.
\end{equation}
We apply Step 3 to $(X, A^{(n)}, H^{(n)}, M)$ and $(\Omega, \F, \PP, (\F_t)_{t\geq 0})$ so that we obtain a sequence $(\tilde{X}^{(n)}, \tilde{A}^{(n)}, \tilde{H}^{(n)}, \tilde{M}^{(n)})$, $n\in\N_0$ and $(\Omega, \F, \PP, (\tilde{\F}^{(n)}_t)_{t\geq 0})$.
We define 
\begin{equation*}
\tilde{\F}_t \define \bigcap_{n\in \N_0} \tilde{\F}_t^{(n)}.
\end{equation*}
Proving claims 4a-4d finishes Step 4: \\
\noindent \textbf{Claim 4a:} We have $\tilde{Y}_t = \lim_{n\to\infty} \tilde{Y}_t^{(n)}$ for $Y\in\{X, H, M\}$. In particular we have that $\tilde{X}$, $\tilde{H}$, $\tilde{M}$ are $(\tilde{\F}_t)_{t\geq 0}$-adapted. \\

We first show $\tilde{Y}_t = \lim_{n\to\infty} \tilde{Y}_t^{(n)}$ for $t\in \range(A)$: For $t\in\range(A)$ we have $\tilde{Y}_t = Y_{A^{-1}_t}$. By construction we have $A_r^{(n)} \leq A_r^{(n+1)} \leq A_r$ for all $n\in \N$, $r\geq 0$, which implies $(A^{(n)})^{-1}_r \geq (A^{(n+1)})^{-1}_r \geq A^{-1}_r$ for all $r\geq 0$. This implies $(A^{(n)})^{-1}_t$ is non-increasing in $n$. It can be verified that $(A^{(n)})^{-1}_t\searrow A^{-1}_t$ for $n\nearrow \infty$.
For $t$ such that $(A^{(n)})^{-1}_t=A^{-1}_t$ and $t\in\range(A)$ 
it holds that $t\in\range(A^{(n)})$. So we obtain $\lim_{n\to\infty} \tilde{Y}^{(n)}_t = \lim_{r\searrow A^{-1}_t} Y_r = \tilde{Y}_t$. 
 
Now we show $\tilde{Y}_t = \lim_{n\to\infty} \tilde{Y}_t^{(n)}$ for $t\notin \range(A)$: In this case $\exists s\geq 0, \, \varepsilon>0$ s.t. $A_{s^-} \leq t < t + \varepsilon < A_s$. This implies that
 $A^{(n)}_{s^-} \leq A_{s^-} \leq t < t + \varepsilon \leq  A^{(n)}_s \leq A_s$ for sufficiently large $n$, and hence $(A^{(n)})^{-1}_r  = s = A^{-1}_r$ and $r\notin \range(A^{(n)})$ for sufficiently large $n$.
This implies  $\tilde{Y}_t = \lim_{n\to\infty} \tilde{Y}^{(n)}$. 

Noting that $\tilde{\F}^{(n+1)}_t \subseteq \tilde{\F}^{(n)}_t$ for all $n\in \N$ and using  $\tilde{Y}_t = \lim_{n\to\infty} \tilde{Y}_t^{(n)}$ implies that $\tilde{X}$, $\tilde{H}$, $\tilde{M}$ are  $(\tilde{\F}_t)_{t\geq 0}$ adapted. \\

\noindent \textbf{Claim 4b:} $\tilde{M}$ is a local $(\tilde{\F}_t)_{t\geq 0}$ martingale. \\

Let $(\tau^M_n)_n$ be a localizing sequence of $M$, then define a new localizing sequence with respect to $(\tilde{\F}_t)_{t\geq 0}$ by
\begin{equation*}
\tilde{\tau}^M_n \define \inf\{ t\geq 0 \mid (M^{\tau^M_n})^{\sim}_t - \tilde{M}_t \neq 0\} \geq A_{\tau^M_n},
\end{equation*}
where $M^{\tau^M_n}$ denotes the stopped process. The inequality
$\tilde{\tau}^M_n   \geq  A_{\tau^M_n}$ follows from $M^{\tau^M_n}$ and $M$ coinciding upto time $\tau^M_n$. By the Debut theorem 
$\tilde{\tau}^M_n$ are indeed $(\tilde{\F}_t)_{t\geq 0}$ stopping times. Due to $A_\infty=\infty$ it is indeed a localizing sequence.

Hence, we may assume w.l.o.g. that $M$ is a martingale with $\E[|M_\infty|]<\infty$. Recall that by Step 3  $\tilde{M}^{(n)}$ is a $(\tilde{\F}^{(n)}_r)_{r\geq 0}$ (local) martingale. Fix some $0\leq s < t$. We want to prove
\begin{equation*}
\tilde{\F}_s \subseteq \{ A\in \F \mid \E[(\tilde{M}_{t} - \tilde{M}_s)\one_A] = 0\}.
\end{equation*}
Let $A\in \tilde{\F}_s =\cap_{n\in\N}\tilde{\F}^{(n)}_s$. Then we have
\begin{equation*}
\E[(\tilde{M}_{t} - \tilde{M}_s)\one_A] \overset{\textrm{Claim 4a}}{=} \E[\lim_{n\to\infty} (\tilde{M}^{(n)}_{t} - \tilde{M}^{(n)}_{s})\one_A] = \lim_{n\to\infty}\E[(\tilde{M}^{(n)}_t - \tilde{M}^{(n)}_{s})\one_A] = 0.
\end{equation*}
For the second equality we used $\tilde{M}^{(n)}_r = \E[M_\infty \mid \tilde{\F}^{(n)}_r]$ (since $\tilde{M}^{(n)}_\infty = M_\infty$), i.e. uniform integrability with respect to $n$. For the third equality we used that $A\in \tilde{\F}^{(n)}_s$ and that $\tilde{M}^{(n)}$ is a $(\tilde{\F}^{(n)}_r)_{r\geq 0}$ martingale. This implies that $\tilde{M}$ is a martingale.\\

\noindent \textbf{Claim 4c:} If $H$ is predictable then $\tilde{H}$ is predictable. \\

This follows from applying the monotone class theorem to \\
$\mathcal{H} = \{Y:[0,\infty)\times\Omega\to \R \mid \tilde{Y} \text{ is predictable and bounded}\}$,
noting that $\mathcal{M} = \{Y:[0,\infty)\times\Omega\to \R \mid Y \text{ continuous, adapted, bounded}\}$ is closed under multiplication and contained in $\mathcal{H}$. \\

\noindent \textbf{Claim 4d:} For any $s\geq 0$ we have that $A_s$ is $(\tilde{\F}_t)_{t\geq 0}$ stopping time. \\

This follows from
\begin{equation*}
\{A_s \leq t\}  = \bigcap_{n\in\N} \{A^{(n)}_s \leq t\} =
 \bigcap_{n=m}^\infty \{A^{(n)}_s \leq t\} \in \tilde{\F}^{(m)}_t \quad \forall t\geq 0, m\in \N.
\end{equation*}
\end{proof}

\subsection{Proof of the Snell corollary (\texorpdfstring{\cref{cor:characterizationOfLenglartDomination}}{Corollary 6.6})}
\begin{proof}
We first prove the assertion for the special case that $H_0\leq n_0$ for some $n_0 \in \N$.  We start by defining a suitable localizing sequence of bounded stopping times $(\sigma_{n})_{n\geq n_0}$ which ensures that $\{X_{\tau \wedge \sigma_{n}} - H_{\tau \wedge \sigma_{n}} \mid \tau\text{ finite stopping time} \}$ is bounded from below by $-n$ and a uniformly integrable family of random variables. To this end let $(\tau_n)_{n\geq n_0}$ be a localizing sequence such that $H_{\tau_n}\leq n$ for all $n\in \N, n\geq n_0$, which exists due to the predictability of $H$ and the assumption that $H_0 \leq n_0$. For all $n\geq n_0$ set
\begin{equation*}
\sigma_{n} \define \inf\{t\geq 0 \mid X_t \geq n\} \wedge \tau_n \wedge n.
\end{equation*}
The family $\{X_{\tau \wedge \sigma_{n}} - H_{\tau \wedge \sigma_{n}} \mid \tau\text{ finite stopping time} \}$ is uniformly integrable due to 
\begin{equation*}
\E[\sup_{t\geq 0} |X_{t\wedge \sigma_{n}} - H_{t\wedge \sigma_{n}}|] \leq 
\E[\sup_{t\geq 0} X_{t\wedge \sigma_{n}} + H_{\sigma_{n}}] 
\leq \E[X_{\sigma_{n}} +n + H_{\tau_n}]  \leq n + 2\E[H_{\tau_n}] < \infty,
\end{equation*}
where we used that $H\geq 0$, $X\geq 0$ for the first inequality and \eqref{eq:def-lenglart} for the third inequality.

Due to the choice of $\sigma_n$ we may apply the Snell envelope theorem \cite[Appendix 1: (22), p.416-417]{DellacherieMeyer} to $Y^{(m,n)}_t = X_{t\wedge \sigma_{n}} - H_{t\wedge \sigma_{n}} + m$, where $m\in\N$ is a new parameter. We define $N^{(m,n)}_t \define Z^{(m,n)}_t -m$ where $Z^{(m,n)}$ denotes the optional strong supermartingale given by Snell envelope theorem. Note that $N^{(m,n)}$ is by definition the minimal optional strong supermartingale such that 
\begin{equation*}
\max\{X_{t\wedge \sigma_{n}} - H_{t\wedge \sigma_{n}}, -m\} \leq N^{(m,n)}_t \quad  \forall t\geq 0.
\end{equation*}
Hence, due to $\max\{X_{t\wedge \sigma_{n}} - H_{t\wedge \sigma_{n}}, -m\} \leq N^{(m,n+1)}_{t\wedge \sigma_n}$
and $\max\{X_{t\wedge \sigma_{n}} - H_{t\wedge \sigma_{n}}, -(m+1)\} \leq N^{(m,n)}_{t}$, we have
\begin{equation}\label{eq:proof-characterization-Lenglart-dom-1}
N^{(m,n)}_t \leq N^{(m,n+1)}_{t\wedge \sigma_n} \,\, \text{ and } \,\, N^{(m+1,n)}_t \leq N^{(m,n)}_{t} \quad  \forall t\geq 0, \,\, n, m\in \N, n\geq n_0.
\end{equation}

Let $I$ denote the set of all $(\F_t)_{t\geq 0}$ stopping times. In the Snell envelope theorem the convention $Y^{(m,n)}_\infty \define 0$ is used and the essential supremum runs over all (not necessarily finite) stopping times $S$. However, due to $Y^{(m,n)} \geq 0$ for all $m\geq n \geq n_0$  we have $Z^{(m,n)}_T = \esssup_{S\in I, T \leq S< \infty} \E[Y^{(m,n)}_S \mid \F_T]$, and therefore $Z^{(m,n)}_0 \leq 0 + m$ i.e. $N^{(m,n)}_0 \leq 0$ for all $m\geq n \geq n_0$  by using \eqref{eq:def-lenglart}. Due to $N^{(m,n)}$ being an optional strong supermartingale and $ 0 \leq N^{(m,n)}_{t\wedge \tau_k} + H_{t\wedge \tau_k}$ for all $k, m, n$, we have
\begin{equation}\label{eq:proof-characterization-Lenglart-dom-2}
\E[|N^{(m,n)}_{t\wedge \tau_k}|] \leq 2 \E[(N^{(m,n)}_{t\wedge \tau_k})^-]
\leq 2\E[H_{\tau_k}].
\end{equation} 
We define $N^{(n)}_t \define \lim_{m\to\infty} N^{(m,n)}_t$ for all $t\geq 0$, which converges $\PP$-a.s. by \eqref{eq:proof-characterization-Lenglart-dom-1} and in $L^1$ to a supermartingale due to \eqref{eq:proof-characterization-Lenglart-dom-2}. Clearly $N^{(n)}_0\leq 0$ and $X_{t\wedge \sigma_n} \leq H_{t\wedge \sigma_n} + N^{(n)}_{t\wedge \sigma_n}$ for all $t$.

We define $\tilde{N}_t \define \lim_{n\to\infty} N^{(n)}_t$ for all $t\geq 0$ which converges pointwise due to \eqref{eq:proof-characterization-Lenglart-dom-1} and $\tilde{N}_{t\wedge \tau_k} = L^1\text{-}\lim_{n\to\infty} N^{(n)}_{t\wedge \tau_k}$ due to \eqref{eq:proof-characterization-Lenglart-dom-2}.
Hence, $\tilde{N}$ is a local supermartingale with localizing sequence $(\tau_k)_{k\in \N}$. Clearly $\tilde{N}_0 \leq 0$ and $X\leq H + \tilde{N}$. 
As $X$ and $H$ are right-continuous, also the right-limits process $(\tilde{N}_{t+})_{t\geq 0}$ satisfies $X_t \leq H_t + \tilde{N}_{t+}$.
Since the filtration satisfies the usual conditions, $(\tilde{N}_{t+})_{t\geq 0}$ is a local supermartingale with a \cadlagg modification.  Setting $N_t \define \tilde{N}_{t+}$ for all $t\geq 0$ and noting $N_0\one_{\{\tau_k>0\}} = \E[N_0 \one_{\{\tau_k>0\}} \mid \F_0] \leq \liminf_{h\searrow 0} \E[\tilde{N}_{\tau_k\wedge h} \one_{\{\tau_k>0\}} \mid \F_0] \leq 0$ proves the claim for the special case.

For the general case that $\E[H_0]<\infty$ we define $A_k \define  \{\omega \in \Omega \mid H_0(\omega) \in [k,k+1)\}$. As $(X_t\one_{A_k})_{t\geq 0}$ is dominated by $(H_t \one_{A_k})_{t\geq 0}$ and $H_0 \one_{A_k} \leq k+1$, we can apply the first part of the proof to obtain local \cadlagg supermartingales $(N^{(k)}_t\one_{A_k})_{t\geq 0}$. Due to $H$ being predictable and $H_0$ integrable, there exists a localizing sequence for $(\tau_n)_{n\geq 0}$ such that $\E[H_{\tau_n}]<\infty$ for all $n$. Define $N_t \define \sum_{k\in\N_0} N^{(k)}_t\one_{A_k}$.  Using $0 \leq N_t + H_t$ for all $t\geq 0$, $\E[H_{\tau_n}]<\infty$ implies that $\E[|N_{t\wedge\tau_n}|] <\infty$ for all $n$, i.e. $(N_t)_{t\geq 0}$ is indeed a local supermartingale.
\end{proof}

\subsection{Counterexample: Predictability of integrator \texorpdfstring{$A$}{A} necessary}
The following example is similar to the proof of \cref{thm:sharpnessAlpha} i.e. \cite[Theorem 2.1]{GeissScheutzow}.
\begin{cntexample}\label{example:notpredictable}
We provide an example that the assumption, that $A$ is predictable, cannot be dropped in
\cref{cor:GronwallSup} and \cref{cor:GronwallNoSup}. 
As we used in the proofs of these corollaries the predictability of $A$ solely when applying \cref{lemma:smoothenJumps}, it implies in particular, that  also  \cref{lemma:smoothenJumps} is false if the predictability assumption is dropped.

 More precisely, we provide an example of an adapted continuous process $(X_t)_{t\geq 0}$, a \cadlagg martingale $(M_t)_{t\geq 0}$ and an adapted \cadlagg non-decreasing process $(A_t)_{t\geq 0}$ which satisfy

\begin{equation*}
0\leq X_t \leq \int_0^t X_{s^-} \dd A_s + M_t + 1.
\end{equation*}
with the property that $\E[\sup_{t\geq 0} X_t^p] = +\infty$ and $\sup_{t\geq 0} \E[e^{qA_t}] < \infty$ for $p\in(0,1)$, $q>0$.

\bigskip

 Let $Z$ be an exponentially distributed random variable on a complete probability space $(\Omega, \F, \PP)$ with $\E[Z]=1$. Define for all $t\geq 0$, $p\in(0,1)$
\begin{equation*}
\begin{aligned}
X_t & \define \int_0^{t\wedge Z} p^{-1} \exp(s/p) \dd s + 1 = \exp((Z\wedge t)/p) \\
M_t & \define \int_0^{t\wedge Z} p^{-1} \exp(s/p) \dd s - p^{-1}\exp(Z\wedge t)/p)\one_{[Z,\infty)}(t) \\
H_t & \define 1 \\
A_t & \define p^{-1} \one_{[Z,\infty)}(t)
\end{aligned}
\end{equation*}
Choose $\tilde{\F}_t := \sigma(\{Z \leq r\} \mid  0 \leq r \leq t)$ for all $t\geq 0$ and denote by $(\F_t)_{t\geq 0}$ the smallest filtration satisfying the usual conditions, that contains $(\tilde{\F}_t)_{t\geq 0}$. $M$ is a martingale because $(\one_{[Z,\infty)}(t) - Z\wedge t)_{t\geq 0}$ is a martingale.  We have for all $t\geq 0$
\begin{equation*}
\begin{aligned}
X_t &= p^{-1}\exp(p^{-1}(Z\wedge t))\one_{[Z,\infty)}(t)+ M_t + 1 \\
& = \int_0^t X_{s^-} \dd A_s + M_t + 1.
\end{aligned}
\end{equation*}
For all $q>0$ and $t\geq 0$ we have:
\begin{equation*}
\begin{aligned}
\E[X^p_t] &= \E[\exp(Z\wedge t)] = \int_0^t\exp(z)\exp(-z) \dd z  + \exp(t)\exp(-t)= t + 1 \\
\E[\exp(q A_t)] & = \E[\exp(qp^{-1} \one_{[Z<\infty)}(t))] \leq \exp(qp^{-1}).
\end{aligned}
\end{equation*}
Noting that $\lim_{t\to\infty} \E[X^p_t] = +\infty$ and $\sup_{t\geq 0} \E[\exp(q A_t)] \leq \exp(qp^{-1})$ for any $q>0$  implies the assertion.
\end{cntexample}

\subsection{Counterexample: Structure of upper bounds for convex and concave $\eta$ differ}

We provide a counterexample which shows that bounds of the type \eqref{eq:notPossibleBound} are in general not true for convex $\eta$ under Assumption $\Anosup$.

\begin{cntexample}\label{counterexample}
Let $(W\ti)_{t\geq 0}$ be a Brownian motion on a suitable underlying filtered probability space and let $x_0>0$ and $\gamma >0$ be constants. Define:
\begin{equation*}
X\ti \define \e^{\gamma(x_0 + W\ti)^2} \qquad \forall t\geq 0.
\end{equation*}
An application of It\^o's formula implies that $X$ satisfies the following equation:
\begin{equation*}
\dd X\ti  = 2 \gamma (x_0 + W\ti) X\ti \dd W\ti + 
(\gamma + 2\gamma^2 (x_0+W\ti)^2)X\ti \dd t  = \eta(X\ti)\dd t + \dd M\ti
\end{equation*}
where $\dd M\ti \define 2 \gamma (x_0 + W\ti) X\ti \dd W\ti, t\geq 0$ is a local martingale starting in $0$, and $\eta(x) \define \gamma (1+2\log(x))x$  for $x\geq 1$. Note, that $X\ti \geq 1$. The function $\eta$ can be extended to a convex non-decreasing function on $[0,\infty)$  with $\eta(0)=0$. Hence, $X$ satisfies
\begin{equation*}
X\ti = \int_0^t \eta(X\s)\dd s + M_t + \e^{\gamma x_0^2}, \qquad \forall t\geq 0.
\end{equation*}
It can be shown that $\| X^*_T\|_p$ explodes at time $T=\frac{1}{2p\gamma}$, but we only prove here that it explodes at some finite time point.
For $p \gamma T \geq 1/2$ we have
\begin{equation*}
\begin{aligned}
\|X^*_T\|_p^p \geq \E[X^p_T] & = \int_{-\infty}^\infty 
\exp(p\gamma (x_0 + \sqrt{T} w)^2) \exp(-w^2/2) \dd w \\
&= \int_{-\infty}^\infty \exp( (p \gamma T - 1/2) w^2 +2 x_0 p\gamma  \sqrt{T} w + p \gamma x_0^2) \dd w = + \infty.
\end{aligned}
\end{equation*}

We apply \cref{thm:stochBihariConvex} to verify that the quantity $\| X^*_T\|_p$ is finite for small $T$. To this end we first compute $G$ and $G^{-1}$.  We have for all $x\geq 1$, $y\geq 0$:
\begin{equation*} 
\begin{aligned}
G(x) &\define \int_1^x \tfrac{\dd u}{\eta(u)} =  \tfrac{1}{2\gamma} \log( 
2\log(x)  +1),  \qquad G^{-1}(y) & = \e^{\e^{2\gamma y}/2 -1/2}.
\end{aligned}
\end{equation*}
\cref{thm:stochBihariConvex}  implies for all $q\in(0,1)$:
\begin{equation*}
\|G^{-1}(G(X_T^*) - T)\|_q =   \|  (X_T^*)^{\exp(-2\gamma t)}\|_q \exp(1/2 \e^{-2\gamma T} -1) \leq (1-q)^{-1} \e^{\gamma x_0^2}
\end{equation*}
yielding that $\| X^*_T\|_p$ is finite for $p< \exp(-2 \gamma T)$.

As $G$ and $G^{-1}$ are bounded on bounded subintervals of $[1,\infty)$, the explosion of the quantity $\|X^*_t\|_p$ at a finite time $t$ is a contradiction to $X$ having an upper bound of the type \eqref{eq:notPossibleBound}.
\end{cntexample}

\renewcommand{\abstractname}{Acknowledgements}
\begin{abstract}
The author wishes to thank Michael Scheutzow for his valuable suggestions and comments, in particular for his idea on how to prove the sharpness of the constant $\beta$ in \cref{thm:sharpnessBeta}. She would like to thank Mark Veraar for his idea to include the estimates \eqref{eq:Mark1} and \eqref{eq:Mark1-r} in  \cref{thm:stochBihariConvex} b) i.e. \eqref{eq:Mark2} and \eqref{eq:Mark2-r} in \cref{cor:GronwallNoSup}.
\end{abstract}


\end{document}